\newtheorem{theorem}{Theorem}[section]
\newtheorem{proposition}[theorem]{Proposition}
\newtheorem{remark}[theorem]{Remark}
\newtheorem{definition}[theorem]{Definition}
\DeclareMathOperator{\cl}{cl}
\DeclareMathOperator{\interior}{int}
\newcommand{\R}{\mathbb{R}}
\newcommand{\set}[1]{\{#1\}}
\definecolor{electricgreen}{rgb}{0.0, 1.0, 0.0}
\begin{document}
\title{Integral Global Optimality Conditions and an Algorithm for  Multiobjective Problems}

\author{Everton J. Silva\thanks{Ph.D. Program in Mathematics, NOVA School Sciences and Technology (NOVA-SST) of New University of Lisbon, Center of Mathematics and Applications (CMA), 2829-516 Caparica, Portugal \texttt{(ejo.silva@campus.fct.unl.pt)}.}
  \and
Elizabeth W. Karas\thanks{Department of Mathematics, Federal University of Paran\'a, CP 19096, 81531-980, Curitiba, PR, Brazil.}
\and
 Lucelina  B. Santos{$^\dagger$}}

\maketitle
\begin{abstract}
In this work, we propose integral  global optimality conditions for multiobjective problems not necessarily differentiable.
The integral characterization, already known for  single objective problems, are extended to  multiobjective problems by  weighted sum and Chebyshev weighted scalarizations.
 Using this last scalarization, we propose an algorithm for obtaining an approximation of the weak Pareto front whose effectiveness is illustrated by solving a collection of multiobjective test problems.
\\[5pt]
{\bf Keywords:} Multiobjective optimization; Pareto front; Weighted sum scalarization; Chebyshev weighted scalarization; Integral global optimality conditions.\\[5pt]
{\bf AMS Classification:} 90C29, 65K05, 49M37.
\end{abstract}

\section{Introduction}
\label{sec:int}
The multiobjective optimization addresses problems of Decision Making which are characterized by multiple and possibly conflicting objective functions to be optimized simultaneously on a set of feasible decisions.  Examples of these problems appear in several applications, for instance, Finance \cite{sakar2014}, Biology \cite{soleimanidamaneh2011}, Management Science \cite{ignatius2011}, Game Theory \cite{pardalos2008}, Engineering \cite{gobbi2015}, among other fields.

The first results in multiobjective optimization are due to V. Pareto, who, in his famous work \textit{``Cours d'Economie Politique''}\ \cite{pareto1886} introduced the concept of an efficient solution.
This notion of optimality has been widely used in Economics because it is closely related to the Theory of Social Welfare. After the Second World War (a time that coincides with the apogee of Operational Research), numerous studies appeared in this field. Necessary and sufficient conditions for the determination of efficient points were studied. Since then, these problems have been extensively studied in the literature, being treated both from 
theoretical and applied point of view. For more historical information about this theme, see \cite{stadler1987}.

Formally, a multiobjective problem admits the following formulation:
\begin{equation}
\begin{array}{l}
\text{minimize }  F(x)=(f_{1}(x),\cdots ,f_{r}(x)) \\ \text{subject to } x\in X,
\end{array}
\label{MOP}
\tag{MOP}
\end{equation}
where $f_{\ell}:\mathbb{R}^{n}\to\mathbb{R},$ $\ell=1,\cdots ,r$, are
given functions and $X$ is a nonempty subset of $\mathbb{R}^{n}.$

Due to the conflicting nature of the objectives, an optimal solution that simultaneously minimizes all the objectives is usually not available. For vectorial functions, the minimum can be defined in terms of efficient solutions. In this paper we use the following notions of optimality:

\begin{itemize}
\item weak Pareto optimality: a point $\overline{x}\in X$ is a weak Pareto optimal  (or weakly efficient) solution of the problem \eqref{MOP} if there is no other feasible point $x\in X$ such that  $f_{\ell}(x)<f_{\ell}(\overline{x})$ for all $\ell=1,\ldots,r$.

\item Pareto optimality: a point $\overline{x}\in X$ is a Pareto optimal  (or efficient) solution of \eqref{MOP} if there is no other feasible point $x\in X$ such that $f_{\ell}(x)\leq f_{\ell}(\overline{x})$  for all $\ell=1,\ldots ,r,$ with strict inequality valid for some $\ell_{0}.$
\end{itemize}

The set of the values of all  Pareto optimal solutions to \eqref{MOP} forms the so-called  Pareto front. In this work we present integral global optimality conditions to the problem \eqref{MOP} and based on these we propose an algorithm to compute an approximation of the weak Pareto front. Other optimality characterization for  multiobjective problems  are discussed in several works. For the differentiable case, necessary first order conditions can be found in \cite{burachik2012,chankong1983,miettinen1999}; second order conditions are discussed in \cite{biggi2000,feng2019,hachimi2007,huyKimTuyen,HuyTuyen,ivanov2015,RizviNasser,santos2013,wang1984}; sufficient conditions under generalized convexity assumptions are proposed in \cite{HernandezEtAl,Luc,Osuna2,osuna1998}. Optimality conditions for non-smooth problems can be found in \cite{BhatiaAggarwal,clark1983,Nobakhtian,Staib}, for instance. Following a different approach, we present a characterization of optimality via integration, inspired  by Falk \cite{falk1973} who proposed it in 1973 for single objective problems.  As this approach only requires the continuity of the objective function and the compactness of the feasible set, it can be applied to a larger variety of problems. In the context of single objective problems,   the works \cite{cui2006,hong1988,kostrevaZheng,wu2005,zheng1985I,zheng1985II,zheng1990I,zheng1990II,zheng1991,zheng1995} also use integration techniques, with weakness hypotheses of continuity and compactness. The characterization of optimality occurs through the concepts of mean value and variance on the level sets of the objective function. From this characterization,  the authors of \cite{hong1988}  proposed an algorithm to obtain  global minimizers of single objective problems and some numerical tests were carried out to illustrate the performance of the method.

We apply these ideas to the problem \eqref{MOP} 
 by applying scalarization techniques to transform the multiobjective problem into a single objective (scalar) problem, in a such way that the solutions of the multiobjective problem can be obtained by solving a classical nonlinear programming problem. There are several techniques for scalarization of multiobjective problems. Among these methods, perhaps the best known is the weighted sum scalarization. This technique was introduced by Gass and Saaty \cite{gass1955} in 1955 and it is probably the most used due to its simplicity. The weighted sum technique is a simple way to generate different Pareto optimal solutions. The failure of this method is that not all Pareto optimal points can be found if the problem is nonconvex. Another scalarization method is the weighted Chebyshev technique, introduced by Bowman \cite{bowman1976} in 1976, which allows us to ensure that any weak Pareto optimal solution of the multiobjective problem \eqref{MOP} is solution of the weighted Chebyshev problem for some choice of weights. This fact is central for our results related to global optimality conditions for multiobjective problems.

Such results are obtained by applying these weighted scalarization techniques to the problem \eqref{MOP} and using integral global optimality conditions obtained by Cui, Wang and Zheng \cite{cui2006}, Hong and Zheng \cite{hong1988}, Wu, Cui and Zheng \cite{wu2005}, Zheng \cite{zheng1985I,zheng1985II,zheng1990I,zheng1990II,zheng1991} and Zheng and Zhuang \cite{zheng1995} to the scalarized problem. In addition, based on the integral characterization of optimality, we extend to multiobjective problems, the algorithm proposed by Hong and Zheng in  \cite{hong1988} for single objective problems. We perform  numerical experiments to illustrate the effectiveness of the proposed algorithm for solving multiobjective problems.

The paper is organized as follows. Sec. \ref{sec:tecres} recalls integral optimality conditions for scalar problems and extends them to multiobjective problems. Based on these conditions, we  propose in Sec. \ref{sec:alg} an algorithm to solve multiobjective problems and  prove its global convergence. Sec. \ref{sec:numexp} is dedicated to numerical experiments to illustrate the performance of the algorithm. Some conclusions are presented in Sec. \ref{sec:conclusion}.

\section{Integral Global Optimality Conditions}

\label{sec:tecres}

In this section we present  integral global optimality conditions for multiobjective problems. First, we recall integral optimality conditions for single objective problems and then we extend it to multiobjective problems, one of the main contributions of this paper.

\subsection{Single objective problem}

Consider the following single objective (scalar) optimization problem: 
\begin{equation}
\begin{array}{l}
\text{minimize }  f(x) \\ 
\text{subject to } x\in X%
\end{array}
\tag{P}  \label{P}
\end{equation}
where $f:\mathbb{R}^{n}\to\mathbb{R}$ is a given function and $X$ is a nonempty subset of $\mathbb{R}^{n}.$ In the sequence, we evoke some results on the integral characterization of global optimality for the scalar problem \eqref{P}.  First, we present a result  proposed originally by Falk   \cite{falk1973} for maximization problems and rewritten now for our context.

\begin{theorem}
\label{teo4}
\label{RIWS}
Consider $X\subset \mathbb{R}^{n}$ a
compact set with nonempty interior, $f:X\rightarrow (-\infty ,0)$ a
continuous function and $\overline{x}\in X$ such that $f(\overline{x})=-1.$
The integral  $\Upsilon (t)=\displaystyle\int_{X}[-f(x)]^{t}\, dx $ converges, when $%
t\rightarrow \infty $ if, and only if, $\overline{x}$ is a global solution
of the problem \eqref{P}.
\end{theorem}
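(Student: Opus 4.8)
The plan is to exploit the fact that $-f$ is a continuous, strictly positive function on the compact set $X$, so that $g(x) := -f(x) \in (0,\infty)$ attains a maximum $M := \max_{x \in X} g(x)$, and that $\overline{x}$ with $f(\overline{x}) = -1$ means $g(\overline{x}) = 1$; thus $\overline{x}$ is a global solution of \eqref{P} precisely when $M = 1$, i.e. when $g(x) \le 1$ for all $x \in X$. So the statement reduces to: $\int_X g(x)^t\,dx$ converges as $t \to \infty$ if and only if $M \le 1$. First I would establish the ``if'' direction: if $M \le 1$, then $0 < g(x)^t \le 1$ on $X$, hence $\Upsilon(t) = \int_X g(x)^t\,dx \le |X| < \infty$ (finite Lebesgue measure, since $X$ is compact), and moreover $\Upsilon$ is nonincreasing in $t$ and bounded below by $0$, so $\lim_{t\to\infty}\Upsilon(t)$ exists and is finite — indeed by dominated convergence the limit equals the measure of $\{x : g(x) = 1\}$.

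For the ``only if'' direction, I would argue the contrapositive: suppose $\overline{x}$ is not a global solution, so $M > 1$. By continuity of $g$, the superlevel set $A_\delta := \{x \in X : g(x) \ge 1 + \delta\}$ is nonempty for some $\delta > 0$ with $1 + \delta < M$; I would choose $\delta$ small enough that $A_\delta$ contains a point in the interior of $X$ (possible since $g$ is continuous and, near a point where $g$ exceeds $1 + 2\delta$, continuity forces a neighborhood on which $g \ge 1+\delta$; intersecting with $\interior X \neq \emptyset$ — here one needs a small argument that such a point can be taken interior, using that $X = \cl(\interior X)$ would be cleanest, or simply that any point of $X$ is a limit of interior points and $g$ is continuous). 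Then $A_\delta$ has positive Lebesgue measure $m > 0$, and
\[
\Upsilon(t) = \int_X g(x)^t\,dx \;\ge\; \int_{A_\delta} g(x)^t\,dx \;\ge\; (1+\delta)^t\, m \;\longrightarrow\; +\infty
\]
as $t \to \infty$, so $\Upsilon(t)$ diverges. This completes the equivalence.

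The main obstacle is the measure-theoretic step in the ``only if'' direction: ensuring that the superlevel set $\{x : g(x) \ge 1+\delta\}$ has strictly positive Lebesgue measure. Continuity alone gives that this set is closed; positivity of its measure is where the hypothesis that $X$ has nonempty interior enters. The clean way is: pick $x^* \in X$ with $g(x^*) > 1$, note that if $x^*$ happens to lie on the boundary one replaces it by a nearby interior point $x^{**}$ with $g(x^{**})$ still $> 1$ (continuity plus $x^* \in \cl(\interior X)$, which holds if we assume $X$ is the closure of its interior, or can be circumvented by a density argument), and then by continuity there is a radius $\rho > 0$ with $B(x^{**}, \rho) \subset X$ and $g > 1 + \delta$ on $B(x^{**}, \rho)$ for suitable $\delta > 0$; that ball has positive measure and the lower bound above goes through. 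I would expect the authors to handle this either by explicitly assuming (or having assumed) that $X$ equals the closure of its interior, or by a brief remark that points where $g$ exceeds the threshold can be taken in the interior.
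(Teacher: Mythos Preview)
The paper does not supply its own proof of this theorem: it is stated as a reformulation of a result of Falk \cite{falk1973} for minimization, and no argument is given in the text. So there is no ``paper's proof'' to compare against; your proposal is the standard argument and is essentially correct.

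One comment on the point you flag yourself. Your counterexample-hunting instinct is right: the hypothesis ``$X$ compact with nonempty interior'' is strictly weaker than ``$X=\cl(\interior X)$'', and the implication ``$\Upsilon(t)$ converges $\Rightarrow$ $\overline{x}$ is a global minimizer'' genuinely needs the latter. Indeed, take $X=[-1,1]\cup\{2\}\subset\mathbb{R}$, $f\equiv -1$ on $[-1,1]$, $f(2)=-2$; then $X$ is compact with nonempty interior, $f$ is continuous on $X$ (the point $2$ is isolated), $\overline{x}=0$ has $f(\overline{x})=-1$, and $\Upsilon(t)=\int_X(-f)^t\,dx = 2$ for all $t$ (the singleton contributes zero measure), yet $\overline{x}$ is not a global minimizer. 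So the robustness-type assumption you suggest is not merely a convenience for the proof but is actually required for the statement; the paper introduces the robustness hypothesis \textbf{A1} immediately afterwards, and Falk's original formulation presumably carried an equivalent restriction. With that caveat, your argument---monotone bounded convergence when $M\le 1$, and $\Upsilon(t)\ge m(1+\delta)^t\to\infty$ on a ball of positive measure inside $\interior X$ when $M>1$---is exactly the right one.
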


We are interested on the global minimization of functions, not necessarily continuous.
In this context,  the concepts of level sets
and robustness are essential. The level set of the function $f$ is defined, for each real number $c$, by
\[
H_{c}=\{x\in \mathbb{R}^{n}\mid f(x)\leq c\}.
\]
The concept of robustness is a generalization of that of openness. 
A set $D\subset \mathbb{R}^{n}$ is robust if its closure coincides with the closure of its interior, $\cl(D)=\cl(\interior{D})$. Clearly, any open set $G$ is robust since $G =\interior\, G$.  On the other hand, a closed set may be nonrobust. In fact, the set with a single point is closed in $\mathbb{R}^n$, but it is nonrobust. Furthermore, the concept of the robustness of a set is closely related to its topological structure. For instance, the set $D = \{1,2\}$ is nonrobust on $\mathbb{R}^1$, but it is robust in $\mathbb{Z}$ with the discrete topology, \cite{hong1988,zheng1991}. 
Next, we have some useful properties of robust sets.  

\begin{remark}[Q. Zheng \protect\cite{zheng1991}]
\label{rem1} The following properties hold for robust sets:

\begin{enumerate}
\item The union of robust sets is robust,

\item The intersection of a robust set and an open set is robust,

\item If $D$ is robust then,  its closure $cl(D)$ is also robust.
\end{enumerate}
\end{remark}

From these concepts, we say that a function $f:\mathbb{R}^{n}\mathbb{\rightarrow R}$ is upper robust over $X$ if, and only if, the set $\{x\in X\mid f(x)<c\}$ is robust, for each real number $c$. 
 For more details on robustness, see Q. Zheng \cite{zheng1990I}-\cite{zheng1991}. From now on, we assume the following assumptions on the problem \eqref{P}:

\begin{itemize}
\item[\textsc{\textbf{A1.}}] $X$ is robust,

\item[\textsc{\textbf{A2.}}]  The function $f:X\rightarrow \mathbb{R}$ is  lower semicontinuous and upper robust,

\item[\textsc{\textbf{A3.}}] There exists $c\in \mathbb{R}$ such that $H_c\cap X$ is a compact set.
\end{itemize}

\medskip

Under these assumptions, we present some definitions  that are fundamental for the sequence of the work.

\begin{definition}\cite[Def. 5.1]{zheng1991}
\label{def1} Suppose that Assumptions \textsc{\textbf{A1}}, \textsc{\textbf{A2}} and \textsc{\textbf{A3}} hold. Consider $\overline{c}=\min\limits_{x\in X}f(x)$ and let $c>%
\overline{c}$. We define the mean value, variance and modified variance of the
function $f$ over $H_c\cap X,$ respectively,
as follows: 
\begin{eqnarray}
M(f,c,X) &=&\displaystyle\frac{1}{\mu (H_{c}\cap X)}\int_{H_{c}\cap
X}f(x)\, d\mu ,  \label{5.4} \\
V(f,c,X) &=&\displaystyle\frac{1}{\mu (H_{c}\cap X)}\int_{H_{c}\cap
X}(f(x)-M(f,c,X))^{2}\, d\mu ,  \label{5.5} \\
V_{1}(f,c,X) &=&\displaystyle\frac{1}{\mu (H_{c}\cap X)}\int_{H_{c}\cap
X}(f(x)-c)^{2}\, d\mu ,  \label{5.6}
\end{eqnarray}%
where $\mu $ denotes the Lebesgue measure in $\mathbb{R}^{n}$.
\end{definition}

\medskip

Under Assumptions \textsc{\textbf{A1}}, \textsc{\textbf{A2}} and \textsc{\textbf{A3}}, it can be proved that $\mu (H_{c}\cap X)>0$, for $c>\overline{c}$ and the function $f$ is measurable on $H_{c}\cap X$ (see \cite[Lemma 5.1]{zheng1991}). Therefore, in this case, the mean value, variance and modified variance are well defined. Furthermore, for $c=\overline{c}$, these definitions can be extended by a limit process as follows.

\begin{definition}\cite[Def. 5.2]{zheng1991}
\label{def2} Under the assumptions of Definition \ref{def1}, we can extend
it to $c\geq \overline{c}$ by: 
\begin{eqnarray*}
M(f,c,X) &=&\displaystyle\lim_{c_{k}\downarrow c}\frac{1}{\mu (H_{c_{k}}\cap
X)}\int_{H_{c_{k}}\cap X}f(x)\, d\mu, \\
V(f,c,X) &=&\displaystyle\lim_{c_{k}\downarrow {c}}\frac{1}{\mu
(H_{c_{k}}\cap X)}\int_{H_{c_{k}}\cap X}(f(x)-M(f,c,X))^{2}\, d\mu, \\
V_{1}(f,c,X) &=&\displaystyle\lim_{c_{k}\downarrow c}\frac{1}{\mu
(H_{c_{k}}\cap X)}\int_{H_{c_{k}}\cap X}(f(x)-c)^{2}\, d\mu .
\end{eqnarray*}
\end{definition}

According to \cite{hong1988}, under the assumptions, these limits exist and are independent of choices of the decreasing sequence $\{c_{k}\}$. With these concepts we can characterize the integral global optimality for the problem \eqref{P} as
follows.

\begin{theorem}
\label{teo5} \cite[Thm. 5.1]{zheng1991} Suppose that Assumptions \textsc{\textbf{A1}}, \textsc{\textbf{A2}} and \textsc{\textbf{A3}} hold. The following statements are equivalent:

\begin{itemize}
\item[(i)] $\overline{x}\in X$ is a global minimizer of \eqref{P} and $\overline{c}=f(\overline{x})$ is the global minimum value  of $f$ over $X$,

\item[(ii)] $M(f,\overline{c},X)=\overline{c}$,

\item[(iii)] $V(f,\overline{c},X)=0$,

\item[(iv)] $V_{1}(f,\overline{c},X)=0$.
\end{itemize}
\end{theorem}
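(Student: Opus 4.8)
The plan is to establish the cycle of implications $(i) \Rightarrow (ii) \Rightarrow (iii) \Leftrightarrow (iv) \Rightarrow (i)$, or more efficiently $(i) \Leftrightarrow (ii)$ together with the equivalences $(ii) \Leftrightarrow (iii) \Leftrightarrow (iv)$, working throughout with the level sets $H_c \cap X$ and the limit definitions from Definition~\ref{def2}. The starting observation is that for every $c > \overline{c}$ the set $H_c \cap X$ is robust (by Assumption \textbf{A2} applied to a slightly larger level, together with Remark~\ref{rem1}) and has positive Lebesgue measure, so the mean value $M(f,c,X)$, the variance $V(f,c,X)$ and the modified variance $V_1(f,c,X)$ are genuine integrals; the case $c = \overline{c}$ is then reached by monotone passage to the limit along any sequence $c_k \downarrow \overline{c}$, and I would first record that these limits exist and are independent of the sequence (this is quoted from \cite{hong1988}).

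For the algebraic heart, I would expand the modified variance: on $H_c \cap X$ one has the pointwise identity
\begin{equation*}
(f(x)-c)^2 = \bigl(f(x) - M(f,c,X)\bigr)^2 + 2\bigl(M(f,c,X) - c\bigr)\bigl(f(x) - M(f,c,X)\bigr) + \bigl(M(f,c,X) - c\bigr)^2,
\end{equation*}
and integrating over $H_c \cap X$ and dividing by $\mu(H_c \cap X)$ kills the cross term, giving
\begin{equation*}
V_1(f,c,X) = V(f,c,X) + \bigl(M(f,c,X) - c\bigr)^2.
\end{equation*}
Passing to the limit $c \downarrow \overline{c}$ yields $V_1(f,\overline{c},X) = V(f,\overline{c},X) + (M(f,\overline{c},X) - \overline{c})^2$. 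Since $f(x) \ge \overline{c}$ on $X$, we have $M(f,c,X) \ge \overline{c}$ for all $c$, and on $H_c \cap X$ we have $f(x) \le c$, so $M(f,c,X) \le c$; hence $\overline{c} \le M(f,\overline{c},X) \le \overline{c}$ in the limit is \emph{not} automatic — rather, $M(f,\overline{c},X) \ge \overline{c}$ always, with equality iff the nonnegative quantity $(M(f,\overline{c},X)-\overline{c})^2$ vanishes. From the displayed identity, $V_1(f,\overline{c},X) = 0$ forces both $V(f,\overline{c},X) = 0$ and $M(f,\overline{c},X) = \overline{c}$; conversely $V(f,\overline{c},X) = 0$ together with $M(f,\overline{c},X) = \overline{c}$ gives $V_1(f,\overline{c},X)=0$; and $M(f,\overline{c},X)=\overline{c}$ implies, via $V(f,c,X) \le V_1(f,c,X)$ and a separate argument, that $V(f,\overline{c},X)=0$. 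This handles $(ii) \Leftrightarrow (iii) \Leftrightarrow (iv)$ modulo the directions that need a genuine analytic input rather than pure algebra.

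The substantive implication is $(ii) \Rightarrow (i)$ (equivalently $\neg(i) \Rightarrow \neg(ii)$): suppose $\overline{x}$ is \emph{not} a global minimizer, so the true minimum value is some $c^* < \overline{c}$ where $\overline{c} = f(\overline{x})$ is merely a candidate. Then for $c$ slightly above $c^*$ the set $H_c \cap X$ is nonempty with positive measure (here upper robustness of $f$ is exactly what guarantees $\mu(H_c \cap X) > 0$ rather than the level set being measure-zero), and on it $f(x) \le c < \overline{c}$, so $M(f,c,X) \le c < \overline{c}$, which after taking the limit contradicts $M(f,\overline{c},X) = \overline{c}$ — one must be careful that the limit in Definition~\ref{def2} is taken as $c_k \downarrow \overline{c}$, so I would instead argue directly that if $\overline{c}$ is not the global minimum then $M(f,\overline{c},X) < \overline{c}$ strictly, using that a set of positive measure where $f$ is bounded away above from $\overline{c}$ is captured in $H_{\overline{c}} \cap X$. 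The converse $(i) \Rightarrow (ii)$ is the easier direction: when $\overline{c}$ is the global minimum, $f(x) \ge \overline{c}$ everywhere on $H_{c_k}\cap X$ gives $M(f,c_k,X) \ge \overline{c}$, while $f(x) \le c_k$ there gives $M(f,c_k,X) \le c_k \to \overline{c}$, so the limit equals $\overline{c}$. The main obstacle I anticipate is the careful justification of the limiting procedure in Definition~\ref{def2} — in particular, showing that robustness of $f$ prevents the pathology where $\mu(H_{c_k}\cap X) \to 0$ or where the limit of averages fails to capture the infimum — and matching the strict-inequality bookkeeping between ``$\overline{c}$ is a candidate value'' and ``$\overline{c}$ is the global minimum.'' This is precisely where Assumptions \textbf{A1}--\textbf{A3} and the quoted Lemma~5.1 of \cite{zheng1991} are essential, and I would lean on \cite{hong1988,zheng1991} for those measure-theoretic technicalities.
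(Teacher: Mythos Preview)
The paper does not give its own proof of this theorem: it is quoted verbatim from \cite[Thm.~5.1]{zheng1991} and no argument is supplied. So there is nothing in the paper to compare your proposal against; any comparison would have to be made with Zheng's original proof, which the present paper does not reproduce.

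That said, your outline is largely sound but has one genuine gap. The identity $V_1(f,c,X)=V(f,c,X)+(M(f,c,X)-c)^2$ is correct and gives exactly $(iv)\Leftrightarrow\bigl[(ii)\text{ and }(iii)\bigr]$, not the three pairwise equivalences. Combined with your (correct) argument for $(i)\Leftrightarrow(ii)$ and the remark that $(ii)\Rightarrow(iii)$, you obtain $(i)\Leftrightarrow(ii)\Leftrightarrow(iv)\Rightarrow(iii)$, but you never close the loop from $(iii)$ back to any of the others. The implication $(iii)\Rightarrow(i)$ is not purely algebraic: if $\overline{c}=f(\overline{x})$ is strictly above the true minimum $c^\ast$, you must show $V(f,\overline{c},X)>0$, i.e.\ that $f$ is \emph{not} a.e.\ constant on $H_{\overline{c}}\cap X$. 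This again uses upper robustness --- for any $c'\in(c^\ast,\overline{c})$ the set $\{f<c'\}\cap X$ has positive measure, and so does $\{c'<f\le\overline{c}\}\cap X$ (take $c'$ close to $c^\ast$ and use that $\mu(H_{\overline{c}}\cap X)>\mu(H_{c'}\cap X)$, or invoke lower semicontinuity near $\overline{x}$) --- giving two positive-measure pieces on which $f$ differs, hence strictly positive variance. You gesture at ``a separate argument'' but never supply it; that is the missing step.
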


\medskip

Next, we will extend these integral characterizations for global optimality to the multiobjective problem (MOP).

\subsection{Multiobjective Problem}

In this section we return our attention to the multiobjective problem \eqref{MOP} to extend the results seen in last section. We assume that $F$ is a continuous function and $X\subset \mathbb{R}^{n}$ is a 
compact set with nonempty interior.

First, we recall some results regarding the scalarization of multiobjective
problems. Define the sets of weighting vectors
\begin{equation}
W =\{w\in \mathbb{R}^{r}\mid w_{\ell}\geq 0,\, \ell=1,\ldots,r\text{ and }
\|w\|_{1}=1\}  \label{w}
\end{equation}
and 
\begin{equation}
W^* =\{w\in \mathbb{R}^{r}\mid w_{\ell}>0,\, \ell=1,\ldots,r\text{ and }
\|w\|_{1}=1\},  \label{wset}
\end{equation}
where $\|w\|_{1}=\displaystyle\sum_{\ell=1}^{r}|w_{\ell}|$. For each $w\in W$%
, we define the weighted sum scalarization function $\Phi_w:\mathbb{R}^n\to%
\mathbb{R}$ by 
\begin{equation}  \label{Phi}
\Phi _{w}(x) =\displaystyle\sum_{\ell=1}^{r}w_{\ell}f_{\ell}(x)
\end{equation}
and we consider the following weighted sum  problem: 
\begin{equation}
\begin{array}{l}
\text{minimize } \Phi _{w}(x) \\ 
\text{subject to }x\in X.%
\end{array}
\tag{$WS_{w}$}  \label{WS}
\end{equation}
The connections between the solutions of the weighted sum problem %
\eqref{WS} and the (weak) Pareto optimal solutions of the problem \eqref{MOP} are given in the following theorems.

\begin{theorem} \cite[Thm. 3.1.1 and 3.1.2]{miettinen1999}
\label{teo1} If there exists $w\in W$ (respectively, $w\in W^*$) such that $\overline{x}\in X$ is a
solution of \eqref{WS} then $\overline{x}$ is a weak Pareto optimal solution (respectively, Pareto optimal solution) of \eqref{MOP}.
\end{theorem}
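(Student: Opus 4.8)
The plan is to prove the two statements (for $w \in W$ and for $w \in W^*$) by contrapositive, since each failure of (weak) Pareto optimality yields directly a feasible point that improves the scalarized objective $\Phi_w$. First I would treat the weak case. Suppose $\overline{x}$ solves \eqref{WS} for some $w \in W$ but is not weakly Pareto optimal for \eqref{MOP}. Then by definition there exists $x \in X$ with $f_\ell(x) < f_\ell(\overline{x})$ for every $\ell = 1,\ldots,r$. Multiplying the $\ell$-th inequality by $w_\ell \geq 0$ and summing over $\ell$, and using $\|w\|_1 = 1$ so that at least one $w_\ell$ is strictly positive, I get
\[
\Phi_w(x) = \sum_{\ell=1}^r w_\ell f_\ell(x) < \sum_{\ell=1}^r w_\ell f_\ell(\overline{x}) = \Phi_w(\overline{x}),
\]
where the strict inequality is justified because the term with $w_\ell > 0$ contributes a strict decrease while the remaining terms contribute $\le$. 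This contradicts the assumption that $\overline{x}$ minimizes $\Phi_w$ over $X$.

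Next I would treat the Pareto case with $w \in W^*$. Suppose $\overline{x}$ solves \eqref{WS} for some $w \in W^*$ but is not Pareto optimal for \eqref{MOP}. Then there exists $x \in X$ with $f_\ell(x) \le f_\ell(\overline{x})$ for all $\ell$ and $f_{\ell_0}(x) < f_{\ell_0}(\overline{x})$ for some index $\ell_0$. Since now every weight is strictly positive ($w_\ell > 0$), multiplying by $w_\ell$ preserves the inequalities in the same direction, the $\ell_0$-th term is strictly smaller, and summing yields $\Phi_w(x) < \Phi_w(\overline{x})$, again contradicting optimality of $\overline{x}$ for \eqref{WS}.

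I do not expect any serious obstacle here: the result is an elementary consequence of the linearity of $\Phi_w$ in the objectives together with the sign conditions on the weights. The only point requiring a little care is the bookkeeping of strict versus non-strict inequalities — in the weak case one needs the summed inequality to be strict, which holds because every summand is strict (all $f_\ell$ decrease) even though some weights may vanish; in the Pareto case the strictness of the sum comes solely from the $\ell_0$-th term, which is why one must insist $w_{\ell_0} > 0$, and hence $w \in W^*$ rather than merely $w \in W$. A remark could note that this asymmetry is exactly what distinguishes the two assertions and explains why the weighted sum method can fail to recover all Pareto points.
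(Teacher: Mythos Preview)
Your proof is correct and is precisely the standard contrapositive argument. Note, however, that the paper does not supply its own proof of this statement: Theorem~\ref{teo1} is quoted from \cite[Thm.~3.1.1 and 3.1.2]{miettinen1999} as a known scalarization result, so there is no in-paper proof to compare against; your argument is exactly the one found in that reference.
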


\medskip Now we will define the weighted Chebyshev scalarization. For that,
let $F^*\in \mathbb{R}^{r}$ be the ideal objective vector, where its
components $f_{\ell}^*$ are obtained by minimizing each objective function
individually subject to the constraints, that is, for each $%
\ell=1,\ldots,r,$ 
\begin{equation}
f_{\ell}^*=\min\limits_{x\in X}f_{\ell}(x).  \label{1}
\end{equation}
If there exists $\overline{x}\in X$, such that $F(\overline{x})=F^*$, then $\overline{x}$ would be a solution of the multiobjective problem \eqref{MOP} and the Pareto optimal set would be reduced to it. In general, the ideal objective vector can be used as a lower bound for the objective function at the Pareto optimal set. Now, given $\xi\in\mathbb{R}^r_{+}$ with small positive components, we consider the utopian objective vector  $u^\ast=F^\ast-\xi$ and for $w\in W$, we define the weighted Chebyshev scalar function $\Psi_w:\mathbb{R}^n\to\mathbb{R}$ by 
\begin{equation}  \label{Psi}
\Psi_{w}(x) =\displaystyle\max_{\ell=1,\ldots,r}
\{w_{\ell}(f_{\ell}(x)-{u_\ell^\ast})\} 
\end{equation}
and we solve the following problem: 
\begin{equation}
\begin{array}{l}
\text{minimize } \Psi_{w}(x) \\ 
\text{subject to } x\in X.%
\end{array}
\tag{$WCS_{w}$}  \label{WCS}
\end{equation}

The convexity (or generalized convexity) of the multiobjective optimization problem \eqref{MOP} is sufficient to ensure that all Pareto optimal solutions can be found using the weighted sum scalarization. See Theorem 3.1.4 in \cite{miettinen1999}, Lemma 2 in \cite{wang1984} and Theorems 3.2 and 3.3 in \cite{Osuna2}. On the other hand, next theorem shows that all weak Pareto optimal solutions can be found by the weighted Chebyshev technique, without any additional hypotheses. 
 
\begin{theorem}\cite[Thm. 3.4.2 and 3.4.5]{miettinen1999}
\label{teo3}
The point $\overline{x}\in X$ is a weak Pareto optimal solution of the multiobjective problem \eqref{MOP} if, and only if, $\overline{x}$ is a solution of \eqref{WCS} for some weighting vector $w\in W^*$.
\end{theorem}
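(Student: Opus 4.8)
The plan is to establish both implications directly from the definition of the weighted Chebyshev function $\Psi_w$, using two facts. First, the utopian vector is chosen so that $f_\ell(x)-u_\ell^\ast>0$ for every $x\in X$ and every $\ell$, since $u_\ell^\ast=f_\ell^\ast-\xi_\ell<f_\ell^\ast\le f_\ell(x)$ by \eqref{1}. Second, the elementary observation that for $a,b\in\mathbb{R}^{r}$ one has $\max_{\ell}a_\ell<\max_{\ell}b_\ell$ whenever $a_\ell<b_\ell$ for all $\ell$ (the maximum of $a$ is attained at some index where the strict inequality still holds). This is the classical argument behind Theorems 3.4.2 and 3.4.5 in \cite{miettinen1999}.

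For sufficiency I would argue by contraposition: suppose $\overline{x}$ solves \eqref{WCS} for some $w\in W^*$ but is not weakly Pareto optimal, so there is $x\in X$ with $f_\ell(x)<f_\ell(\overline{x})$ for all $\ell$. Multiplying by $w_\ell>0$ gives $w_\ell(f_\ell(x)-u_\ell^\ast)<w_\ell(f_\ell(\overline{x})-u_\ell^\ast)$ for each $\ell$, and the max-monotonicity remark yields $\Psi_w(x)<\Psi_w(\overline{x})$, contradicting the optimality of $\overline{x}$ in \eqref{WCS}.

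For necessity, assuming $\overline{x}$ weakly Pareto optimal, I would exhibit the weights explicitly. Put $d_\ell:=f_\ell(\overline{x})-u_\ell^\ast>0$ and $w_\ell:=d_\ell^{-1}\big(\sum_{j=1}^{r}d_j^{-1}\big)^{-1}$. Then $w_\ell>0$, $\sum_{\ell}w_\ell=1$, so $w\in W^*$, and $w_\ell d_\ell=\big(\sum_{j}d_j^{-1}\big)^{-1}=:\lambda$ for every $\ell$, hence $\Psi_w(\overline{x})=\lambda$. If $\overline{x}$ were not a minimizer of $\Psi_w$ over $X$, there would be $x\in X$ with $\Psi_w(x)<\lambda$, i.e. $w_\ell(f_\ell(x)-u_\ell^\ast)<\lambda=w_\ell d_\ell$ for all $\ell$; dividing by $w_\ell>0$ gives $f_\ell(x)<f_\ell(\overline{x})$ for all $\ell$, contradicting weak Pareto optimality of $\overline{x}$. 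Hence $\overline{x}$ solves \eqref{WCS}.

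The computations here are immediate; the points that actually require attention are the preliminaries rather than the estimates. One must know that the ideal vector $F^\ast$ is well defined, i.e. each $f_\ell$ attains its minimum over $X$, which follows from the standing assumptions that $F$ is continuous and $X$ is compact with nonempty interior. And one must use that $\xi$ has strictly positive components: this is precisely what makes the gaps $d_\ell$ strictly positive, and it is what legitimizes both the construction of $w\in W^*$ and the division steps in the two implications. Without strict positivity of $\xi$ one can only produce $w\in W$ with possibly vanishing components, and the argument would degrade to a statement about Pareto (rather than weak Pareto) optimality.
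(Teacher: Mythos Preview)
Your proof is correct and follows precisely the classical argument from Miettinen's Theorems~3.4.2 and~3.4.5. Note, however, that the paper does not give its own proof of this result: it merely states it with the citation \cite[Thm.~3.4.2 and 3.4.5]{miettinen1999} and uses it as a black box later on (in Theorems~\ref{teo7} and~\ref{mvaluevariance}). So there is nothing in the paper to compare your argument against beyond the reference itself, and your write-up is essentially the standard proof from that reference.

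One small remark on your closing commentary: the sentence ``the argument would degrade to a statement about Pareto (rather than weak Pareto) optimality'' is not quite the right diagnosis. If some $\xi_\ell=0$ and $\overline{x}$ happens to attain the ideal value $f_\ell^\ast$, the problem is simply that $d_\ell=0$ and your weight formula is undefined; the failure mode is a breakdown of the construction of $w$, not a switch between Pareto notions. This does not affect the proof itself, which is sound under the paper's standing assumption that $\xi$ has strictly positive components.
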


\medskip

It is interesting to note that if, for $w\in W^\ast$, the problem \eqref{WCS} has a unique solution, then it will be a Pareto optimal point \cite[Cor. 3.4.4]{miettinen1999}. In addition, if the set of Pareto solutions is uniformly dominant\footnote{The efficient set is uniformly dominant if for every non-efficient point $x^\prime$ there exists an efficient point $x^\ast$ such that $f_\ell(x^\prime)>f_\ell(x^\ast)$ for all $\ell=1,\ldots,r$.}, then every Pareto point can be obtained through the Chebyshev scalarization \cite[Thm. 3 and 4]{bowman1976}. For more details on scalarization methods, see Chankong and Haimes \cite{chankong1983}, Jahn \cite{jahn2011} and Miettinen \cite{miettinen1999}.

Now, we will present integral characterizations of global optimality for multiobjective problems \eqref{MOP} from these scalarization techniques. As $F$ is a continuous function on the compact set $X$, the functions $\Phi _{w}$ and $\Psi _{w}$, defined by \eqref{Phi} and \eqref{Psi}, respectively, are continuous. From Weierstrass Theorem, it follows that there exist constants $M_{1}$ and $M_{2}$ such that $\Phi_{w}(x)< M_{1}$ and $\Psi_{w}(x)< M_{2}$ for all $x\in X$. Define the functions, for $x\in\mathbb{R}^n$, by
\begin{eqnarray*}
\widetilde{\Phi}_{w}(x) &=&\Phi _{w}(x)-M_{1} \\
\widetilde{\Psi}_{w}(x) &=&\Psi _{w}(x)-M_{2}.
\end{eqnarray*}
These functions are continuous on $X$ and $\widetilde{\Phi}_{w}(x), 
\widetilde{\Psi}_{w}(x)< 0$ for all $x\in X$. 

\begin{remark}
\label{rem3}
As a consequence, a point  $\overline{x}\in X$ is a global minimizer of $\Phi _{w}$ over $X$ if, and only if, $\overline{x}$ minimizes the function $x\mapsto -\frac{\widetilde{\Phi}%
_{w}(x)}{\widetilde{\Phi }_{w}(\overline{x})}$ on $X$. A similar result holds for the
function ${\Psi}_{w}.$
\end{remark}

Now we state the results inherited from  last section by the application of the weighted scalarization techniques to the problem \eqref{MOP}.

\begin{theorem}
\label{teo6}
Consider $\overline{x}\in X$ and  $w\in W$ (respectively, $w\in W^{\ast })$. If $\Upsilon_w(t)=\displaystyle\int_{X}\left[\frac{\widetilde{\Phi}_{w}(x)}{\widetilde{\Phi }_{w}(\overline{x})}\right]^{t}d\mu $ converges as $t\rightarrow \infty$, then $\overline{x}$ is a weak Pareto optimal solution
(respectively,  Pareto optimal solution) for the problem \eqref{MOP}.

\end{theorem}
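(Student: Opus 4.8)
The plan is to reduce Theorem~\ref{teo6} to the single-objective integral criterion (Theorem~\ref{teo4}) applied to the scalarized function, and then invoke the scalarization results (Theorems~\ref{teo1} and~\ref{teo3}) to pass back to (weak) Pareto optimality. First I would set $g_w(x) = \widetilde{\Phi}_w(x)/\widetilde{\Phi}_w(\overline{x})$ and observe that, since $\widetilde{\Phi}_w$ is continuous on the compact set $X$ and strictly negative there, $g_w$ is continuous on $X$, strictly positive, and satisfies $g_w(\overline{x}) = 1$; equivalently, $-g_w$ maps $X$ into $(-\infty,0)$ and takes the value $-1$ at $\overline{x}$. Thus $\Upsilon_w(t) = \int_X [g_w(x)]^t \, d\mu = \int_X [-(-g_w(x))]^t \, d\mu$ is exactly the integral $\Upsilon(t)$ of Theorem~\ref{teo4} for the function $f = -g_w$ on $X$ (which is compact with nonempty interior by hypothesis). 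Hence, by Theorem~\ref{teo4}, convergence of $\Upsilon_w(t)$ as $t\to\infty$ is equivalent to $\overline{x}$ being a global minimizer of $-g_w$ over $X$, i.e.\ a global maximizer of $g_w$, i.e.\ — multiplying through by the positive constant $-\widetilde{\Phi}_w(\overline{x})$ and using Remark~\ref{rem3} — a global minimizer of $\widetilde{\Phi}_w$ over $X$, which in turn is a global minimizer of $\Phi_w$ over $X$ since the two differ by the constant $M_1$.

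Next I would conclude: $\overline{x}$ solves the weighted sum problem \eqref{WS}. If $w\in W$, Theorem~\ref{teo1} gives that $\overline{x}$ is weak Pareto optimal for \eqref{MOP}; if $w\in W^*$, the same theorem gives that $\overline{x}$ is Pareto optimal. This yields both assertions of the theorem simultaneously.

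The only genuinely delicate point is making sure the hypotheses of Theorem~\ref{teo4} are met: one needs $X$ compact with nonempty interior (given in this subsection's standing assumptions), one needs the integrand to be the $t$-th power of a strictly positive continuous function so that $-g_w$ has range in $(-\infty,0)$ (this uses the strict inequalities $\widetilde{\Phi}_w(x)<0$ established just before Remark~\ref{rem3}, together with $\widetilde{\Phi}_w(\overline{x})<0$ so that the quotient is well-defined and positive), and one needs the normalization $(-g_w)(\overline{x}) = -1$, which holds by construction. I expect the main obstacle — really the only thing requiring care — to be the bookkeeping of signs and of the direction of the optimization (Theorem~\ref{teo4} is phrased for a function into $(-\infty,0)$ and characterizes a \emph{minimizer} of that function, whereas the natural object here is a \emph{maximizer} of the positive quotient); once the substitution $f = -g_w$ is made explicit this collapses to a one-line citation of Theorem~\ref{teo4} followed by Theorems~\ref{teo1} or~\ref{teo3}.
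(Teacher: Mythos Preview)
Your proposal is correct and follows essentially the same route as the paper: apply Theorem~\ref{teo4} to the function $x\mapsto -\widetilde{\Phi}_w(x)/\widetilde{\Phi}_w(\overline{x})$ to conclude that $\overline{x}$ minimizes it over $X$, invoke Remark~\ref{rem3} to deduce that $\overline{x}$ solves \eqref{WS}, and finish with Theorem~\ref{teo1}. The only superfluous item is the mention of Theorem~\ref{teo3}, which concerns the Chebyshev scalarization and plays no role here (it is used in the companion Theorem~\ref{teo7}); otherwise your argument and the paper's coincide.
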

\begin{proof}
The set $X$ is compact and the function $x\mapsto - \frac{\widetilde{\Phi}_w(x)}{\widetilde{\Phi}_w(\bar{x})}$ satisfies the hypotheses of Theorem  \ref{RIWS}.
Thus,  $\overline{x}\in X$
is a global minimizer of this function over $X$, and consequently, a global solution  of the problem \eqref{WS}, 
by Remark \ref{rem3}. So, the result follows from  Theorem \ref{teo1}.
\end{proof}

\begin{theorem}
\label{teo7} 
A point $\overline{x}\in X$ is a weak Pareto optimal solution of \eqref{MOP} if, and only if, there exists $w\in W^*$ such that the function defined by $\Upsilon_w(t)=\displaystyle\int_{X}\left[ \frac{\widetilde{\Psi }_{w}(x)}{\widetilde{\Psi}_{w}(\overline{x})}\right]^{t} d\mu$ converges when $t\to \infty$.
\end{theorem}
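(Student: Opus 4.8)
The plan is to combine Theorem \ref{RIWS} (the Falk-type integral characterization) with Theorem \ref{teo3} (the Chebyshev scalarization characterization of weak Pareto optimality), exactly as Theorem \ref{teo6} combined Theorem \ref{RIWS} with Theorem \ref{teo1}. First I would prove the ``if'' direction. Suppose there exists $w\in W^*$ such that $\Upsilon_w(t)=\int_X [\widetilde{\Psi}_w(x)/\widetilde{\Psi}_w(\overline{x})]^t\, d\mu$ converges as $t\to\infty$. The function $x\mapsto -\widetilde{\Psi}_w(x)/\widetilde{\Psi}_w(\overline{x})$ maps $X$ into $(-\infty,0)$, is continuous on the compact set $X$ with nonempty interior, and takes the value $-1$ at $\overline{x}$ (since $\widetilde{\Psi}_w(\overline{x})/\widetilde{\Psi}_w(\overline{x})=1$). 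Noting that $[\widetilde{\Psi}_w(x)/\widetilde{\Psi}_w(\overline{x})]^t = [-\widetilde{\Psi}_w(x)/\widetilde{\Psi}_w(\overline{x})]^t$ is not quite right unless $t$ is even, so more carefully $\int_X [-(-\widetilde{\Psi}_w(x)/\widetilde{\Psi}_w(\overline{x}))]^t$ has the form required by Theorem \ref{RIWS} with $f(x)=-\widetilde{\Psi}_w(x)/\widetilde{\Psi}_w(\overline{x})<0$, i.e.\ $\int_X[-f(x)]^t\,d\mu$; this is literally $\Upsilon_w(t)$. Hence by Theorem \ref{RIWS}, $\overline{x}$ is a global minimizer of $f$ over $X$, and by Remark \ref{rem3}, $\overline{x}$ is a global minimizer of $\Psi_w$ over $X$, i.e.\ a solution of \eqref{WCS}. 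Since $w\in W^*$, Theorem \ref{teo3} gives that $\overline{x}$ is a weak Pareto optimal solution of \eqref{MOP}.

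For the ``only if'' direction, suppose $\overline{x}\in X$ is a weak Pareto optimal solution of \eqref{MOP}. By Theorem \ref{teo3}, there exists $w\in W^*$ such that $\overline{x}$ solves \eqref{WCS}, i.e.\ $\overline{x}$ is a global minimizer of $\Psi_w$ over $X$. By Remark \ref{rem3}, $\overline{x}$ is then a global minimizer of $x\mapsto -\widetilde{\Psi}_w(x)/\widetilde{\Psi}_w(\overline{x})$ over $X$. Applying Theorem \ref{RIWS} in the converse direction with this $f$, the integral $\int_X[-f(x)]^t\,d\mu = \Upsilon_w(t)$ converges as $t\to\infty$. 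This furnishes the required weighting vector $w\in W^*$.

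The main technical point to be careful about — and the one I would flag as the principal obstacle, though it is minor — is the sign bookkeeping: one must verify that the integrand $[\widetilde{\Psi}_w(x)/\widetilde{\Psi}_w(\overline{x})]^t$ appearing in $\Upsilon_w$ genuinely coincides with $[-f(x)]^t$ for $f(x) = -\widetilde{\Psi}_w(x)/\widetilde{\Psi}_w(\overline{x})$, and that $f$ indeed satisfies all three hypotheses of Theorem \ref{RIWS}: $f$ continuous on $X$ (inherited from continuity of $F$ and hence of $\Psi_w$ and $\widetilde{\Psi}_w$), $f(X)\subset(-\infty,0)$ (because $\widetilde{\Psi}_w<0$ on $X$ while $\widetilde{\Psi}_w(\overline{x})<0$, so the ratio is positive and its negative is negative), and $f(\overline{x})=-1$. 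One should also note that $\widetilde{\Psi}_w(\overline{x})\neq 0$ so the ratio is well defined, which holds since $\widetilde{\Psi}_w(\overline{x})<0$. Once these are in place, the equivalence is just the concatenation Theorem \ref{RIWS} $\Longleftrightarrow$ (global minimality of $f$) $\Longleftrightarrow$ (via Remark \ref{rem3}, global minimality of $\Psi_w$, i.e.\ solving \eqref{WCS}) $\Longleftrightarrow$ (via Theorem \ref{teo3}, with $w\in W^*$) weak Pareto optimality.
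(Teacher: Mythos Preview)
Your proposal is correct and follows essentially the same approach as the paper: both proofs concatenate Theorem~\ref{teo3} (weak Pareto optimality $\Leftrightarrow$ solving \eqref{WCS} for some $w\in W^*$), the observation in Remark~\ref{rem3} (minimizing $\Psi_w$ $\Leftrightarrow$ minimizing $x\mapsto -\widetilde{\Psi}_w(x)/\widetilde{\Psi}_w(\overline{x})$), and Theorem~\ref{RIWS} applied to this last function. The paper's version is simply more compressed, handling both directions at once via a chain of equivalences rather than treating the two implications separately.
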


\begin{proof}
By Theorem \ref{teo3}, $\overline{x}\in X$ is a weak Pareto solution of \eqref{MOP} if, and only if, there exists $\overline{w}\in W^*$ such that $\overline{x}$ is a solution of \eqref{WCS},  which is equivalent to 
\begin{equation}
\label{aux1}
\widetilde\Psi_{\overline{w}}(\overline{x})\leq \widetilde\Psi_{\overline{w}}(x)\, \text{ for all } x \in X.
\end{equation}
Since
$\widetilde{\Psi}_{\overline{w}}(x)<0$, for all $x\in X$, \eqref{aux1} is equivalent to say that $\overline{x}$ is a global minimizer, in $X$, of the function $x\mapsto -\dfrac{\widetilde{\Psi}_{\overline{w}}(x)}{\widetilde{\Psi}_{\overline{w}}(\overline{x})}$. As this function satisfies the hypotheses of Theorem \ref{RIWS},  the proof is concluded.
\end{proof}

Now we will discuss optimality conditions for the multiobjective problem \eqref{MOP} from the concepts of mean, variance and modified variance. In particular, the next theorem establishes global  optimality necessary conditions to the problem \eqref{MOP} using the weighted sum scalarization.

\begin{theorem}
\label{mvaluevariance_Phi} Assume that Assumption \textsc{\textbf{A1}} holds. Suppose that
 there exists  $w\in W$ (respectively, $w\in W^\ast$) such that the function $\Phi_w$ satisfies  Assumptions \textsc{\textbf{A2}} and \textsc{\textbf{A3}}. 
 Consider $\overline{x}\in X$ and  $\overline{c}=\Phi _{w}(\overline{x})$. Then  the following conditions are equivalent:
\begin{itemize}
\item[(i)] $\overline{x}\in X$ is a
solution of the problem \eqref{WS},

\item[(ii)]  $M(\Phi _{w},\overline{c},X)=%
\overline{c}$,

\item[(iii)] $V(\Phi _{w},\overline{c}%
,X)=0$,

\item[(iv)]  $V_{1}(\Phi _{w},\overline{c},X)=0$,
\end{itemize}
where 
$M$, $V$ and $V_1$ are, respectively, the mean value, variance and modified variance of $\Phi_{w}$.
Moreover, in these equivalent situations, $\overline{x}$ is a weak Pareto optimal solution (respectively, Pareto optimal solution) of \eqref{MOP}.
\end{theorem}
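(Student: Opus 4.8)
The plan is to reduce Theorem~\ref{mvaluevariance_Phi} directly to the scalar characterization in Theorem~\ref{teo5}, applied to the function $\Phi_w$ on $X$, and then to invoke Theorem~\ref{teo1} for the final assertion about (weak) Pareto optimality. First I would observe that, by hypothesis, $X$ is robust (Assumption~\textsc{\textbf{A1}}) and $\Phi_w$ satisfies Assumptions~\textsc{\textbf{A2}} and \textsc{\textbf{A3}}; hence the triple $(\Phi_w, X)$ meets all the standing assumptions under which Theorem~\ref{teo5} is stated, and the quantities $M(\Phi_w,\cdot,X)$, $V(\Phi_w,\cdot,X)$, $V_1(\Phi_w,\cdot,X)$ are well defined (via Definitions~\ref{def1} and \ref{def2}).

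Next I would note that statement~(i) here — that $\overline{x}$ solves \eqref{WS} — is, with $\overline{c}=\Phi_w(\overline{x})$, exactly statement~(i) of Theorem~\ref{teo5} for the problem of minimizing $\Phi_w$ over $X$: it asserts precisely that $\overline{x}$ is a global minimizer of $\Phi_w$ on $X$ and that $\overline{c}$ is the global minimum value. One small point needs care: Theorem~\ref{teo5}(i) bundles together ``$\overline{x}$ is a global minimizer'' and ``$\overline{c}=\min_X\Phi_w$'', whereas here $\overline{c}$ is \emph{defined} as $\Phi_w(\overline{x})$; so I would remark that if $\overline{x}$ is a global minimizer then automatically $\overline{c}=\Phi_w(\overline{x})=\min_{x\in X}\Phi_w(x)$, so the two formulations coincide. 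Given this identification, the equivalence of (i)--(iv) is immediate from the equivalence of (i)--(iv) in Theorem~\ref{teo5}, since (ii)--(iv) here are literally the conditions $M(\Phi_w,\overline{c},X)=\overline{c}$, $V(\Phi_w,\overline{c},X)=0$, $V_1(\Phi_w,\overline{c},X)=0$ appearing there.

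Finally, for the ``moreover'' part, I would argue that in the equivalent situations $\overline{x}$ solves \eqref{WS} for the given $w\in W$ (respectively $w\in W^\ast$), so Theorem~\ref{teo1} yields that $\overline{x}$ is a weak Pareto optimal solution (respectively a Pareto optimal solution) of \eqref{MOP}.

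I do not expect a genuine obstacle here: the content of the theorem is essentially a transcription of Theorem~\ref{teo5} for $\Phi_w$ together with Theorem~\ref{teo1}. The only thing demanding attention is bookkeeping — verifying that the hypotheses imposed on $\Phi_w$ are exactly what Theorem~\ref{teo5} requires, and handling the minor mismatch in how $\overline{c}$ enters statement~(i) — rather than any substantive argument.
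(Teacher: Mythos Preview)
Your proposal is correct and follows exactly the paper's approach: the paper's proof is the single sentence ``The result is an immediate consequence of Theorems~\ref{teo5} and~\ref{teo1}.'' Your additional bookkeeping remarks (verifying the hypotheses of Theorem~\ref{teo5} for $\Phi_w$ and reconciling the role of $\overline{c}$) are sound and simply make explicit what the paper leaves implicit.
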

\begin{proof}
The result is an immediate consequence of Theorems \ref{teo5} and \ref{teo1}.
\end{proof}

Analogous result holds for the weighted Chebyshev scalarization \eqref{Psi}. However, for this scalarization, we have stronger global optimality conditions by considering  the following assumptions:

\begin{description}
\item[\textsc{A1}$^{\prime }.$] $X$ is a robust and closed set,
\item[\textsc{A2}$^{\prime }.$] The functions $f_{\ell},$ $\ell=1,\cdots ,r$, are continuous,
\item[\textsc{A3}$^{\prime }.$] There exist an index $\ell_{0}$ and  $c_0\in\mathbb{R}$
such that the set $\{x\in X\mid f_{\ell_{0}}(x)\leq c_{0}\}$ is compact.
\end{description}

Next proposition ensures that if the problem \eqref{MOP} satisfies these assumptions, then  \textbf{A1, A2} and \textbf{A3} hold for weighted sum problem \eqref{WS} and weighted Chebyshev problem \eqref{WCS},  for all $w\in W^{\ast}.$

\begin{proposition}
\label{prop} Suppose that \textsc{\textbf{A1}}$^{\prime}$, \textsc{\textbf{A2}}$^{\prime }$ and \textsc{\textbf{A3}}$^{\prime }$ hold. Then, \textsc{\textbf{A1}} holds and for all $w\in W^{\ast }$, the functions $\Phi_w$ and $\Psi _{w}$ satisfies Assumptions \textsc{\textbf{A2}} and \textsc{\textbf{A3}}.
\end{proposition}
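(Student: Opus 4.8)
\section*{Proof proposal for Proposition~\ref{prop}}

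The plan is to check \textsc{A1}, \textsc{A2}, \textsc{A3} for the scalarized functions, handling $\Phi_w$ and $\Psi_w$ in parallel whenever possible. Assumption \textsc{A1} is immediate, since it is exactly the first half of \textsc{A1}$'$. For \textsc{A2}, I would first note that by \textsc{A2}$'$ each $f_\ell$ is continuous, hence $\Phi_w=\sum_\ell w_\ell f_\ell$ is continuous as a finite linear combination of continuous functions, and $\Psi_w=\max_{\ell=1,\ldots,r}w_\ell(f_\ell-u_\ell^\ast)$ is continuous as a pointwise maximum of finitely many continuous functions; in particular both are lower semicontinuous. For upper robustness of $g\in\{\Phi_w,\Psi_w\}$, fix $c\in\mathbb{R}$ and write $\{x\in X\mid g(x)<c\}=X\cap\{x\in\mathbb{R}^n\mid g(x)<c\}$. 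The second set is open by continuity of $g$ and $X$ is robust by \textsc{A1}$'$, so Remark~\ref{rem1}(2) shows the intersection is robust. Thus $g$ is upper robust over $X$, which is \textsc{A2}.

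The substantive point is \textsc{A3}, and here the positivity of the weights ($w\in W^\ast$) is what makes the argument run. The idea is to bound the scalarized function from below by an affine function of the distinguished objective $f_{\ell_0}$ from \textsc{A3}$'$: I claim that for $g\in\{\Phi_w,\Psi_w\}$ there are constants $a>0$ and $b\in\mathbb{R}$ with
\[
g(x)\ \ge\ a\,f_{\ell_0}(x)+b\qquad\text{for all }x\in X .
\]
For $g=\Psi_w$ this is immediate from the definition of a maximum, $\Psi_w(x)\ge w_{\ell_0}(f_{\ell_0}(x)-u_{\ell_0}^\ast)$, so one may take $a=w_{\ell_0}>0$ and $b=-\,w_{\ell_0}u_{\ell_0}^\ast$. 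For $g=\Phi_w$ I would use that the ideal objective vector $F^\ast\in\mathbb{R}^r$, i.e.\ each $f_\ell$ is bounded below on $X$ by $f_\ell^\ast$; then $\Phi_w(x)=\sum_\ell w_\ell f_\ell(x)\ge w_{\ell_0}f_{\ell_0}(x)+\sum_{\ell\neq\ell_0}w_\ell f_\ell^\ast$, so again $a=w_{\ell_0}>0$ and $b=\sum_{\ell\neq\ell_0}w_\ell f_\ell^\ast$.

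Granting the displayed inequality, I would conclude as follows. Set $c:=a\,c_0+b$, with $\ell_0,c_0$ as in \textsc{A3}$'$. If $x\in X$ and $g(x)\le c$, then $a\,f_{\ell_0}(x)+b\le g(x)\le c=a\,c_0+b$, whence $f_{\ell_0}(x)\le c_0$ because $a>0$; therefore
\[
\{x\in X\mid g(x)\le c\}\ \subseteq\ \{x\in X\mid f_{\ell_0}(x)\le c_0\},
\]
and the right-hand set is compact by \textsc{A3}$'$. On the other hand $\{x\in X\mid g(x)\le c\}$ is closed, being the intersection of the closed set $X$ (\textsc{A1}$'$) with the closed sublevel set of the continuous function $g$; a closed subset of a compact set is compact, so \textsc{A3} holds for $g$. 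Applying this to $g=\Phi_w$ and to $g=\Psi_w$ finishes the proof.

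I expect the only genuine obstacle to be the lower bound for $\Phi_w$. Unlike $\Psi_w$, whose max-structure automatically isolates the coordinate $f_{\ell_0}$, for $\Phi_w$ one must rule out that another objective pulls $\Phi_w$ to $-\infty$ along a non-compact direction of $X$; this is precisely where one needs every $f_\ell$ to be bounded below on $X$ — equivalently, that $F^\ast$ is finite, which is part of the model — and it is also the reason the weight $w_{\ell_0}$ must be strictly positive, so that $w\in W^\ast$ cannot be weakened to $w\in W$. The robustness bookkeeping in \textsc{A2} and the closedness step in \textsc{A3} are routine once Remark~\ref{rem1} and \textsc{A1}$'$ are available.
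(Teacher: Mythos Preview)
Your argument is correct and follows the same route as the paper: \textsc{A1} is immediate from \textsc{A1}$'$, \textsc{A2} comes from continuity of $\Phi_w,\Psi_w$ together with Remark~\ref{rem1}(2), and \textsc{A3} is obtained by showing the scalarized sublevel set is a closed subset of the compact set $\{x\in X\mid f_{\ell_0}(x)\le c_0\}$ from \textsc{A3}$'$, using $w_{\ell_0}>0$. The only noticeable difference is in the $\Phi_w$ part of \textsc{A3}: you bound the non-$\ell_0$ terms below by the ideal values $f_\ell^\ast$ --- correctly flagging that this relies on each $f_\ell$ being bounded below on $X$, which is guaranteed by the paper's standing hypothesis that $X$ is compact --- whereas the paper's printed choice of $c$ for $\Phi_w$ actually depends on $x$ and is therefore not well formed as written; your version is the clean statement of what is intended there.
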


\begin{proof}
Assumption \textsc{\textbf{A1}} follows trivially from \textsc{\textbf{A1$^{\prime }$}}. Consider $w\in W^{\ast}$. Using the Assumption \textsc{\textbf{A2$^{\prime}$}}, the functions $\Phi_w$ and $\Psi_w$ are continuous. Furthermore, for each $c\in \mathbb{R}$, the sets $\{x\in \mathbb{R}^{n}\mid\Phi_{w}(x)<c\}$ and $\{x\in \mathbb{R}^{n}\mid\Psi_{w}(x)<c\}$ are open. By  Assumption \textsc{\textbf{A1$^{\prime }$}} and Remark \ref{rem1}, their intersections with $X$ are robust. Consequently, $\Phi_w$ and $\Psi_{w}$ are upper robust functions and \textsc{\textbf{A2}} holds. 

For each  $c\in \mathbb{R}$, consider the level set  $H_c=\{x\in \mathbb{R}^n\mid\Phi_{w}(x)\leq c\}$. Assumption \textsc{\textbf{A1}}$^\prime$ implies that $ H_c\cap X$ is a  closed set. Furthermore, as $w\in W^{\ast}$, we have,  in particular to $\ell_0$ given in Assumption  \textsc{\textbf{A3}}$^\prime$, that
$$H_c\cap X  = \left\{x\in X\mid \displaystyle\sum_{\ell=1}^{r}w_\ell f_\ell (x)\leq c\right\}=\left\{x\in X\mid f_{\ell_0}(x)\leq \dfrac{1}{w_{\ell_0}}\left(c-\sum_{\ell=1 \atop \ell\neq \ell_0}^{r} w_\ell f_\ell(x)\right)\right\}.$$

Taking $c=w_{\ell_0}c_0 +\displaystyle \sum_{\ell=1 \atop \ell\neq \ell_0}^{r} w_\ell f_\ell(x)$, with $c_0$ given in Assumption \textsc{\textbf{A3}}$ ^\prime$, the set $H_c\cap X$ is compact.

Analogously, for each $c\in\mathbb{R}$, consider the level set $H_c=\{x\in \mathbb{R}^n\mid\Psi _{w}(x)\leq c\}$.  Assumption \textsc{\textbf{A1}}$^\prime$ implies that $H_c\cap X$ is a  closed set. Furthermore, as $w\in W^{\ast}$, we have,  in particular to $\ell_0$ given in Assumption  \textsc{\textbf{A3}}$^\prime$, that  $$ H_c\cap X \subset  \{x\in X\mid w_{\ell_{0}}(f_{\ell_{0}}(x)-u_{\ell_{0}}^{\ast })\leq c\}=\left\{x\in X\mid f_{\ell_{0}}(x)\leq \frac{c+w_{\ell_{0}}u_{\ell_{0}}^{\ast }}{w_{\ell_{0}}}\right\}.$$ 
Taking $c=w_{\ell_{0}}(c_{0}-u_{\ell_{0}}^{\ast})$, with  $c_0$ given in Assumption \textsc{\textbf{A3}}$ ^\prime$, the  set $H_c\cap X$ is compact, which proves  \textsc{\textbf{A3}} for both functions and concludes the proof. 
\end{proof}

Next theorem ensures necessary and sufficient global optimality conditions of \eqref{MOP} using the weighted Chebyshev scalarization \eqref{Psi}, while Theorem \ref{mvaluevariance_Phi} establishes only necessary  conditions for the weighted sum scalarization.

\begin{theorem}
\label{mvaluevariance} Suppose that the problem \eqref{MOP} satisfies \textsc{\textbf{A1$^{\prime }$, A2}}$^{\prime }$ and \textsc{\textbf{A3}}$^{\prime }$. Consider $\overline{x}\in X$. Then, the following conditions are equivalent:
\begin{itemize}
\item[(i)] $\overline{x}$ is a weak Pareto optimal solution of (\ref{MOP}),

\item[(ii)] there exists $w\in W^{\ast }$ such that $\overline{x}$ minimizes 
$\Psi _{w}$ over  $X$ and $\overline{c}=\Psi _{w}(\overline{x})$,

\item[(iii)] there exists $w\in W^{\ast }$ such that $M(\Psi _{w},\overline{c},X)=%
\overline{c}$, with $\overline{c}=\Psi _{w}(\overline{x})$,

\item[(iv)] there exists $w\in W^{\ast }$ such that $V(\Psi _{w},\overline{c}%
,X)=0$, with $\overline{c}=\Psi _{w}(\overline{x})$,

\item[(v)] there exists $w\in W^{\ast }$ such that $V_{1}(\Psi _{w},\overline{c},X)=0$,
with $\overline{c}=\Psi _{w}(\overline{x})$,
\end{itemize}
where $M$, $V$ and $V_1$ are, respectively, the mean value, variance and modified variance of $\Psi_{w}$.
\end{theorem}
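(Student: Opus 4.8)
The plan is to obtain all the equivalences by stringing together three results already established: Theorem~\ref{teo3}, which links weak Pareto optimality of \eqref{MOP} with optimality for the weighted Chebyshev problem \eqref{WCS}; Proposition~\ref{prop}, which guarantees that under \textsc{\textbf{A1}}$^{\prime}$, \textsc{\textbf{A2}}$^{\prime}$, \textsc{\textbf{A3}}$^{\prime}$ the scalarized function $\Psi_w$ inherits Assumptions \textsc{\textbf{A1}}, \textsc{\textbf{A2}}, \textsc{\textbf{A3}} for every $w\in W^{\ast}$; and Theorem~\ref{teo5}, the integral characterization of global optimality for a scalar problem. I would begin with the equivalence (i)~$\Leftrightarrow$~(ii): by the very definition of \eqref{WCS}, a point is a solution of \eqref{WCS} exactly when it is a global minimizer of $\Psi_w$ over $X$, so condition (ii) is a literal restatement of the right-hand side of Theorem~\ref{teo3} (the clause $\overline{c}=\Psi_w(\overline{x})$ only names the minimum value), and (i)~$\Leftrightarrow$~(ii) is immediate.

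Next I would fix an arbitrary $w\in W^{\ast}$ and argue that, for this $w$, the four statements that $\overline{x}$ minimizes $\Psi_w$ over $X$ with $\overline{c}=\Psi_w(\overline{x})$, that $M(\Psi_w,\overline{c},X)=\overline{c}$, that $V(\Psi_w,\overline{c},X)=0$, and that $V_1(\Psi_w,\overline{c},X)=0$ are mutually equivalent. Since \eqref{MOP} satisfies \textsc{\textbf{A1}}$^{\prime}$, \textsc{\textbf{A2}}$^{\prime}$, \textsc{\textbf{A3}}$^{\prime}$, Proposition~\ref{prop} ensures that \textsc{\textbf{A1}} holds and that $\Psi_w$ satisfies \textsc{\textbf{A2}} and \textsc{\textbf{A3}}, so $f:=\Psi_w$ meets the hypotheses of Theorem~\ref{teo5}. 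Because $\overline{x}\in X$ and $\overline{c}=\Psi_w(\overline{x})\ge\min_{x\in X}\Psi_w(x)$, the mean value, variance and modified variance of $\Psi_w$ at $\overline{c}$ are well defined (via Definitions~\ref{def1} and \ref{def2}), and Theorem~\ref{teo5} applied to $\Psi_w$ delivers precisely the claimed equivalence, with ``$\overline{x}$ is a global minimizer of $\Psi_w$ over $X$ and $\overline{c}$ is the minimum value'' playing the role of item~(i) there.

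Finally I would pass to the existential quantifier: since for each individual $w\in W^{\ast}$ the four statements above are equivalent, their existential closures over $w\in W^{\ast}$ are equivalent as well, which is exactly (ii)~$\Leftrightarrow$~(iii)~$\Leftrightarrow$~(iv)~$\Leftrightarrow$~(v); combining with (i)~$\Leftrightarrow$~(ii) finishes the proof. I do not expect a genuine difficulty here, as the argument is a composition of previously proved facts; the one point demanding care is the bookkeeping of the quantifier ``there exists $w\in W^{\ast}$'' — Theorem~\ref{teo5} must be invoked with $w$ held fixed, and only afterwards should one take the union over admissible weights, rather than attempting to carry a single $w$ through conditions that a priori come with different weighting vectors.
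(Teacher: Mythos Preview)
Your proposal is correct and follows exactly the same approach as the paper, which simply states that the result is an immediate consequence of Proposition~\ref{prop} and Theorems~\ref{teo5} and~\ref{teo3}. Your additional care about fixing $w$ before invoking Theorem~\ref{teo5} and only then passing to the existential quantifier is a worthwhile clarification of a point the paper leaves implicit.
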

\begin{proof}
The result is an immediate consequence of Proposition \ref{prop} and Theorems \ref{teo5} and \ref{teo3}.
\end{proof}

It is important to note that the result of 
Theorem \ref{mvaluevariance} holds under more general conditions. In fact, it is enough that $X$ is robust and $\Psi_{w}$ satisfies \textsc{\textbf{A2}} and  \textsc{\textbf{A3}}, for $w\in W^{\ast }$. Based on Theorem \ref{mvaluevariance},  we extend the algorithm proposed in \cite{hong1988} (originally to solve single objective problems) for obtaining an approximation of the weak Pareto front of the multiobjective problem \eqref{MOP}.

\section{The algorithm}
\label{sec:alg}
Now, inspired by \cite{hong1988},  we state an  algorithm based on the mean value of level sets for multiobjective problems and  we discuss   its global convergence regarding the scalarized problem \eqref{WCS}.

\medskip

\begin{algorithm}[H]
\label{MVLSM} 
  \SetAlgoLined
  \DontPrintSemicolon
  \SetNlSty{textbf}{}{.}
  \SetAlgoCaptionSeparator{.}
  \caption{Mean Value of Level Sets for Multiobjective Problems -- MVLSM}
\begin{tabbing}
\hspace*{7mm}\= \hspace*{7mm}\= \hspace*{7mm}\= \hspace*{7mm}\=
 \hspace*{7mm}\= \hspace*{7mm}\= \kill
Data:   
$\varepsilon \geq  0$, $\overline{w}\in\mathbb{R}^r_+$ with $\overline{w}_i\in (0,1)$ for all $i=1,\ldots,r$ and $\xi\in\mathbb{R}^{r}_{+}\backslash\set{0}$. \\
\textbf{Scalarization}\+ \\
Compute
$
\displaystyle{f^*_\ell=\min_{x\in X} \{f_\ell(x)\}}$, for each $\ell\in \{1,\ldots, r\}$ { and define $u^\ast=F^\ast-\xi$}. \\
Consider the weighting vector  $w\in\R^r$ such that $w_i=\dfrac{\overline{w}_i}{\|\overline{w}\|_1}$, for $i=1,\ldots,r$, and \\
the scalarized function $\Psi_w(x)= \displaystyle\max_{\ell}\{w_\ell(f_\ell(x)-{ u_\ell^*})\}$. \- \\
\textbf{Initialization}\+ \\
Take  $c_0\in \mathbb{R}$ such that $ H_{c_0}=\{x\in \mathbb{R}^n \mid \Psi_w(x)\leq c_0 \}\neq \emptyset.$ \\

Set $k:=0$. \- \\
\textbf{Iterations}\+ \\
{\sc repeat}  \+ \\
Let $H_{c_{k}}= \{x\in \mathbb{R}^n\mid \Psi_w(x)\leq c_{k}\}.$ \\
Compute
$VF=V_1(\Psi_w,c_{k},X)=\dfrac{1}{\mu(H_{c_k}\cap X)}\displaystyle\int_{H_{c_k}\cap X} (\Psi_w(x)-c_{k})^2\, d\mu.$ \\
Compute
$c_{k+1}=M(\Psi_w,c_{k},X)=\dfrac{1}{\mu(H_{c_{k}}\cap X)}\displaystyle\int_{H_{c_{k}}\cap X} \Psi_w(x)\, d\mu$. \\
$k: = k+1$  \- \\
{\sc until} $VF < \varepsilon$ \\
 $\overline{c}=c_{k}$ and  $\overline{H}=H_{\overline{c}}$
\end{tabbing}
\end{algorithm}

\medskip

The scalar $c_0$ can be chosen as any real such that the  set $H_{c_0}\cap X$ is nonempty. So, it can be set as a sufficiently large real or as $c_0=\Psi_w(x_0)$, for a given initial point $x_0\in X$. The stopping criterion of the algorithm is justified by Theorem \ref{mvaluevariance}, item ($v$). From now on, assume that  $\varepsilon=0$ and the algorithm generates an infinite sequence $\{ c_k\}$. Next theorem ensures that this sequence converges 
to the global minimum value  
of the scalarized function $\Psi_w$ over $X$.

\medskip

\begin{theorem}
Suppose that the problem \eqref{MOP} satisfies \textsc{\textbf{A1$^{\prime}$, A2}}$^{\prime }$ and \textsc{\textbf{A3}}$^{\prime }$.
Given a weighting vector  $w\in W^{*}$,
consider the sequence $\{ c_k\}$  generated by Algorithm \ref{MVLSM}. Then, this sequence is convergent and the limit  $\overline{c}=\displaystyle\lim_{k\to\infty} c_k$ is the global minimum value of $\Psi_w$ over $X$. Furthermore, $H_{\overline{c}}\cap X$ is the set of its global minimizers and consequently a subset of  weak Pareto optimal solutions of \eqref{MOP}.
\end{theorem}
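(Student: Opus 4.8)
The plan is to adapt the convergence analysis of Hong and Zheng~\cite{hong1988} for the scalar problem to~\eqref{MOP} through the Chebyshev scalarization and Proposition~\ref{prop}. Fix $w\in W^{\ast}$. Since~\eqref{MOP} satisfies \textbf{A1}$^{\prime}$, \textbf{A2}$^{\prime}$, \textbf{A3}$^{\prime}$, Proposition~\ref{prop} gives that \textbf{A1} holds and that $\Psi_w$ satisfies \textbf{A2} and \textbf{A3}; hence Definitions~\ref{def1} and~\ref{def2} and Theorem~\ref{teo5} apply to $\Psi_w$, the level sets $H_{c_k}\cap X$ generated by the algorithm can be taken with finite measure (they are contained in a compact level set furnished by \textbf{A3}$^{\prime}$, as in the proof of Proposition~\ref{prop}), and, for $c_k$ above the global minimum value $m:=\min_{x\in X}\Psi_w(x)$, have positive Lebesgue measure by \cite[Lemma 5.1]{zheng1991}; so the iteration $c_{k+1}=M(\Psi_w,c_k,X)$ is well defined and finite. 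First I would record the two elementary monotonicity facts: since $\Psi_w(x)\leq c_k$ for every $x\in H_{c_k}\cap X$, averaging gives $c_{k+1}=M(\Psi_w,c_k,X)\leq c_k$; and since $\Psi_w\geq m$ on $X$, the same average satisfies $c_{k+1}\geq m$. Hence $\{c_k\}$ is non-increasing and bounded below, so it converges to a limit $\overline{c}\geq m$, which is the $\overline{c}$ of the statement.

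Next I would show $\overline{c}=m$, arguing by contradiction: suppose $\overline{c}>m$. Then $c_k\downarrow\overline{c}>m$, so $H_{c_k}\cap X\downarrow H_{\overline{c}}\cap X$, all these sets have positive finite measure, and $\Psi_w$ is bounded (between $m$ and $c_0$) on $H_{c_0}\cap X$; by continuity of the measure from above and dominated convergence, $\mu(H_{c_k}\cap X)\to\mu(H_{\overline{c}}\cap X)$ and $\int_{H_{c_k}\cap X}\Psi_w\,d\mu\to\int_{H_{\overline{c}}\cap X}\Psi_w\,d\mu$, so passing to the limit in the recursion yields $M(\Psi_w,\overline{c},X)=\overline{c}$ (this is the right-continuity of the mean-value operator that is also built into Definition~\ref{def2} and the surrounding results of~\cite{hong1988}). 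On the other hand, since $\overline{c}>m$ there is a global minimizer $x^{\ast}$ of $\Psi_w$ with $\Psi_w(x^{\ast})=m<\overline{c}$, so the set $\{x\in X\mid\Psi_w(x)<\overline{c}\}$ is nonempty; by the upper robustness of $\Psi_w$ (Assumption~\textbf{A2}, from Proposition~\ref{prop}) this set is robust, hence its intersection with $\interior X$ has positive Lebesgue measure. Therefore $\int_{H_{\overline{c}}\cap X}\Psi_w\,d\mu<\overline{c}\,\mu(H_{\overline{c}}\cap X)$, i.e. $M(\Psi_w,\overline{c},X)<\overline{c}$, a contradiction. (Alternatively, $M(\Psi_w,\overline{c},X)=\overline{c}$ together with the equivalence (i)$\Leftrightarrow$(ii) of Theorem~\ref{teo5} for $\Psi_w$ already forces $\overline{c}=m$.) Hence $\overline{c}=m$ is the global minimum value of $\Psi_w$ over $X$.

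To finish, since $\overline{c}=m$ we have $\overline{H}=H_{\overline{c}}\cap X=\{x\in X\mid\Psi_w(x)\leq m\}=\argmin_{x\in X}\Psi_w(x)$, that is, $\overline{H}$ is exactly the solution set of~\eqref{WCS}; as $w\in W^{\ast}$, Theorem~\ref{teo3} shows every point of $\overline{H}$ is a weak Pareto optimal solution of~\eqref{MOP}, so $\overline{H}$ is a subset of the weak Pareto optimal set.

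I expect the main obstacle to be the limit-passage step, i.e. justifying $M(\Psi_w,c_k,X)\to M(\Psi_w,\overline{c},X)$ — the right-continuity in $c$ of the mean-value-of-level-sets operator — and, tied to it, the strict-inequality step ruling out $\overline{c}>m$; both hinge on the upper robustness of $\Psi_w$ to guarantee that a strict sublevel set of positive measure is present, and on the measure-theoretic regularity underlying Definition~\ref{def2} (continuity of $c\mapsto\mu(H_c\cap X)$ and of $c\mapsto\int_{H_c\cap X}\Psi_w\,d\mu$). The monotonicity, the boundedness below, and the final identification with weak Pareto points are routine once Proposition~\ref{prop} and Theorems~\ref{teo3} and~\ref{teo5} are in hand.
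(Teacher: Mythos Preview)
Your proposal is correct and follows essentially the same route as the paper: establish that $\{c_k\}$ is non-increasing and bounded below by the global minimum $\hat c$ (via averaging and \cite[Lemma~5.1]{zheng1991}), pass to the limit in the recursion to obtain $M(\Psi_w,\overline{c},X)=\overline{c}$, and then invoke the mean-value characterization of global optimality (Theorem~\ref{teo5}/Theorem~\ref{mvaluevariance}) to force $\overline{c}=\hat c$, after which the identification of $H_{\overline{c}}\cap X$ with the minimizer set and Theorem~\ref{teo3} finish. The only cosmetic differences are that the paper states the limit-passage step by citing the continuity of $c\mapsto M(\Psi_w,c,X)$ from \cite[Prop.~1.3]{hong1988} rather than writing out your dominated-convergence argument, and it argues directly rather than by contradiction; your extra ``robustness gives a set of positive measure with $\Psi_w<\overline{c}$, hence $M<\overline{c}$'' option is simply an in-line reproof of the (ii)$\Rightarrow$(i) direction of Theorem~\ref{teo5}.
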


\begin{proof}
Let $w\in W^*$,  $\hat{c}=\displaystyle\min_{x\in X} \Psi_w(x)$ and  the sequence $\{c_k\}$ generated by the algorithm from $c_0$
such that $ H_{c_0}=\{x\in \mathbb{R}^n \mid \Psi_w(x)\leq c_0 \}\neq \emptyset.$ If $c_0=\hat{c}$, then $VF=V_1(\Psi_w,c_0,X)=0$ and the algorithm stops. Now, consider $c_0>\hat{c}$.
In this case, for all $x\in H_{c_0}\cap X$, $\hat{c}\leq \Psi_{w}(x)\leq c_0$. Integrating this expression and using the definition of $c_1$ and the fact that, by \cite[Lemma 5.1]{zheng1991}, $\mu(H_{c_0}\cap X)>0$, we have
 $\hat{c}\leq c_1\leq c_0$. Following a similar reasoning we can conclude that $\hat{c}\leq c_{k+1}\leq c_k$ for all $k\geq 0$.

If there exists $k_0\in \mathbb{N}$ such that $c_{k_0}=M(\Psi_w,c_{k_0},X)$, then, by Theorem \ref{mvaluevariance}, $V_1(\Psi,c_{k_0},X)=0$ and the algorithm stops with $\hat{c}=c_{k_0}$. In this case,  $H_{c_{k_0}}\cap X$ is the set of global minimizers of $\Psi_w$ and, by Theorem \ref{teo3}, it is a subset of Pareto optimal solutions of (MOP). 

Otherwise, 
the sequence $\{ c_k\}$ of mean values is decreasing and bounded below, and consequently convergent, say to $\overline{c} \geq \hat{c}$. Thus, by the convergence of the sequence $\{c_k\}$ and the continuity of the function $M$ with respect to the second argument (more details about the continuity of function $M$ can be found in \cite[Prop. 1.3]{hong1988}), we have $$\overline{c}=M(\Psi_w,\overline{c},X).$$ Applying Theorem \ref{mvaluevariance}, we conclude that $\overline{c}=\hat{c}$ is the global minimum value of $\Psi_w$ over $X$. On the other hand, as the sequence $\{c_k\}$ is decreasing and bounded below by $\overline{c}$, it follows that the
sequence $\{ H_{c_k}\}$ of level sets satisfies the following condition
$$ 	(H_{c_0}\cap X)\supset (H_{c_1}\cap X)\supset\ldots\supset (H_{c_k}\cap X)\supset (H_{c_{k+1}}\cap X)\supset \ldots\supset (H_{\overline{c}}\cap X).$$
This fact implies
	$$
	\bigcap_{k=1}^{\infty} (H_{c_k}\cap X)=H_{\overline{c}}\cap X=
	\{x\in X \mid \Psi_w(x)=\overline{c}\},$$
	which proves $H_{\overline{c}}\cap X$ is 
	the set of  global minimizers 
	of $\Psi_w$ over $X$. Furthermore, 
	by Theorem \ref{mvaluevariance}, this set is a subset of weak Pareto optimal solutions of \eqref{MOP}, completing the proof.
\end{proof}

\medskip

For each vector $w\in\R^r$, we  obtain a subset of weak Pareto optimal solutions of \eqref{MOP}. So, to determine an approximation of the weak Pareto front of the multiobjective problem \eqref{MOP}, the algorithm should be run several times using different weights.

\section{Numerical experiments}
\label{sec:numexp}

In this section, we describe numerical experiments to illustrate the computational performance of Algorithm \ref{MVLSM}. The tests were performed in a high performance workstation MARKOV: 2*CPU: Intel{\small \textregistered}\ Xeon{\small\textregistered}\ Processor E5-2650 v3 (10 Cores 25M Cache, 2.30 GHz), 160GB RAM\@ 2,133GHz, using {\tt Matlab} 2018b. The set of test problems consists of  {\bf all} $26$ unconstrained and box-constrained multiobjective problems with continuous variable of dimension at most $4$ presented in \cite{chankong1983,deb2001,deb2005,veld1999}. 

In Algorithm \ref{MVLSM}, the random weighting vector $\bar{w}\in\R^r$ has been computed by the {\sc rand Matlab} routine,  the initial mean value has been set as $c_0=10^{8}$, the stopping tolerance as  $\varepsilon=10^{-8}$ and $\xi_\ell=10^{-4}$ for all $\ell=1,\ldots,r$. The multiple integrals in the modified variance $VF$ and in the mean value $c_k$  were computed by nested commands of the {\sc trapz Matlab} routine. The domain of integration was discretized in $10000$ points uniformly distributed.  

Initially, we run 3000 times  Algorithm \ref{MVLSM} considering different weighting random vectors for solving each problem. Tables \ref{table1} - \ref{table7} show the results where the first column displays the data of the problems such as references, dimension $n$,  number $r$ of objectives and some results as the average $T$ of the CPU time  and the average $\bar{k}$ of the number of iterations  among  the $3000$ runs for each problem. As the dimension of the problems presented in Tables \ref{table1} - \ref{table5} is less than $3$, we show,  in the second column, the graph of the objective functions. The last column of all tables presents the approximation of the weak Pareto front generated  from the total of runs of the algorithm and the exact Pareto front is shown whenever its analytical expression is available. These figures illustrate the good performance of Algorithm \ref{MVLSM} that found a good approximation of the weak Pareto front for all $26$ problems spending in average $T=0.0518$ sec and $25$ iterations. The longest CPU time was $0.3112$ sec and the largest number of iterations was, $129$ spent for solving \cite[Example 9]{chankong1983} and \cite[Problem 4.7]{chankong1983}, respectively.

Secondly, we compared the performance of MVLSM (Mean Value of Level Sets for Multiobjective Problems) as described in Algorithm \ref{MVLSM} for solving the 26 problems with two solvers from the literature, namely: 
\begin{itemize}
\item DMS (Direct Multisearch) proposed by Cust\'odio, Madeira, Vaz, and Vicente in  \cite{custodio2011} and freely available at \emph{http://www.mat.uc.pt/dms};
\item MOIF (Multiobjecticve Implicit Filtering) proposed by Cocchi, Liuzzi, Papini, and Sciandrone in  \cite{cocchi2018} and freely available at \emph{http://www.dis.uniroma1.it/lucidi/DFL}.
\end{itemize}
The solvers DMS and MOIF have been tested using their default parameters. In these experiments, we fixed the maximum function evaluations as $20000$ for each algorithm for solving each problem. Figure \ref{purity} shows the performance profile \cite{dolan2002} using the purity metric \cite{pal2004} which compares the quality of Pareto fronts obtained by different solvers. Although the solver DMS is slightly more efficient, the three algorithms are competitive.

\begin{figure}[h!]
\centering
\subfigure[MVLSM versus DMS]{\includegraphics[scale=0.3]{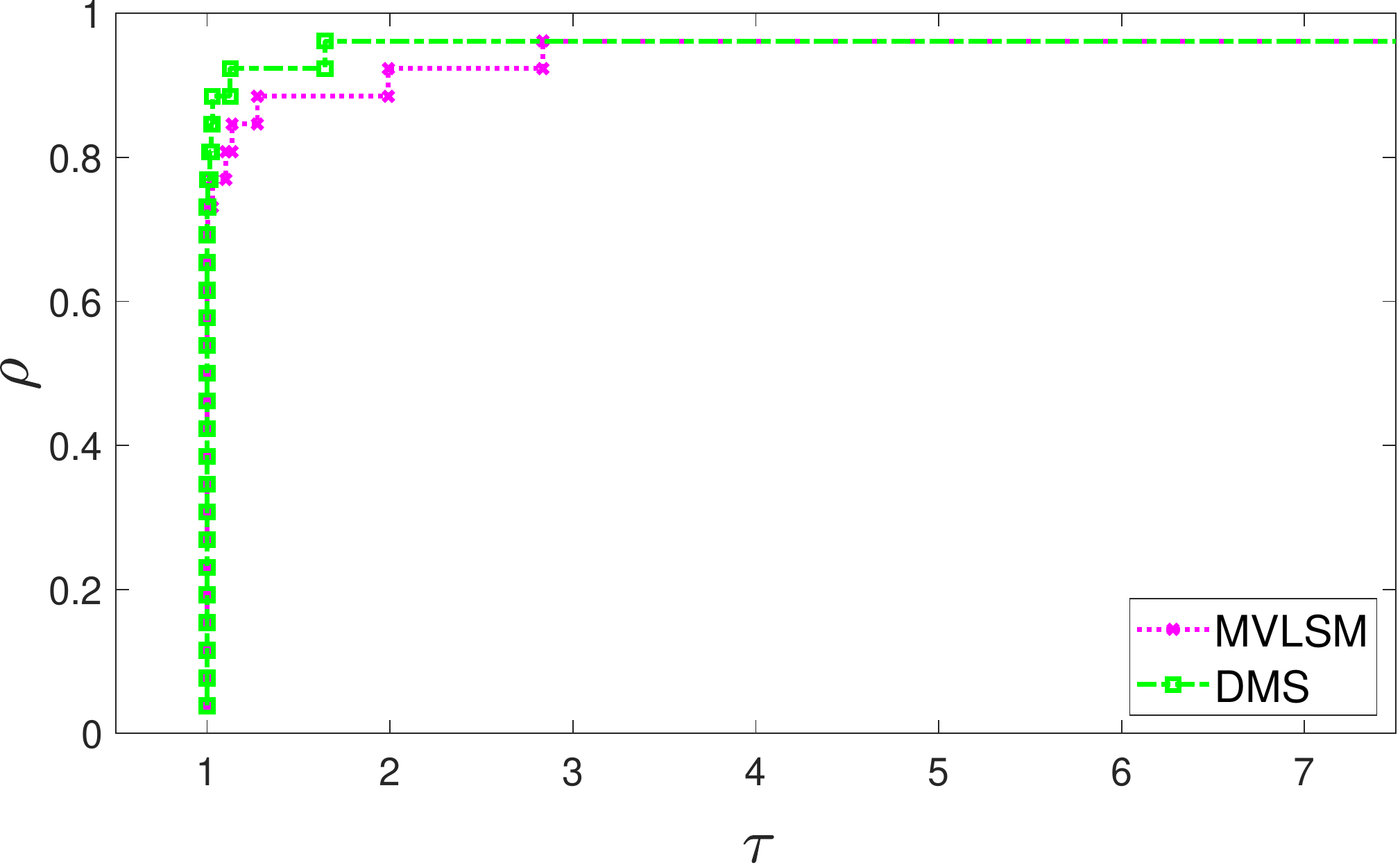}}
\subfigure[MVLSM versus MOIF]{\includegraphics[scale=0.3]{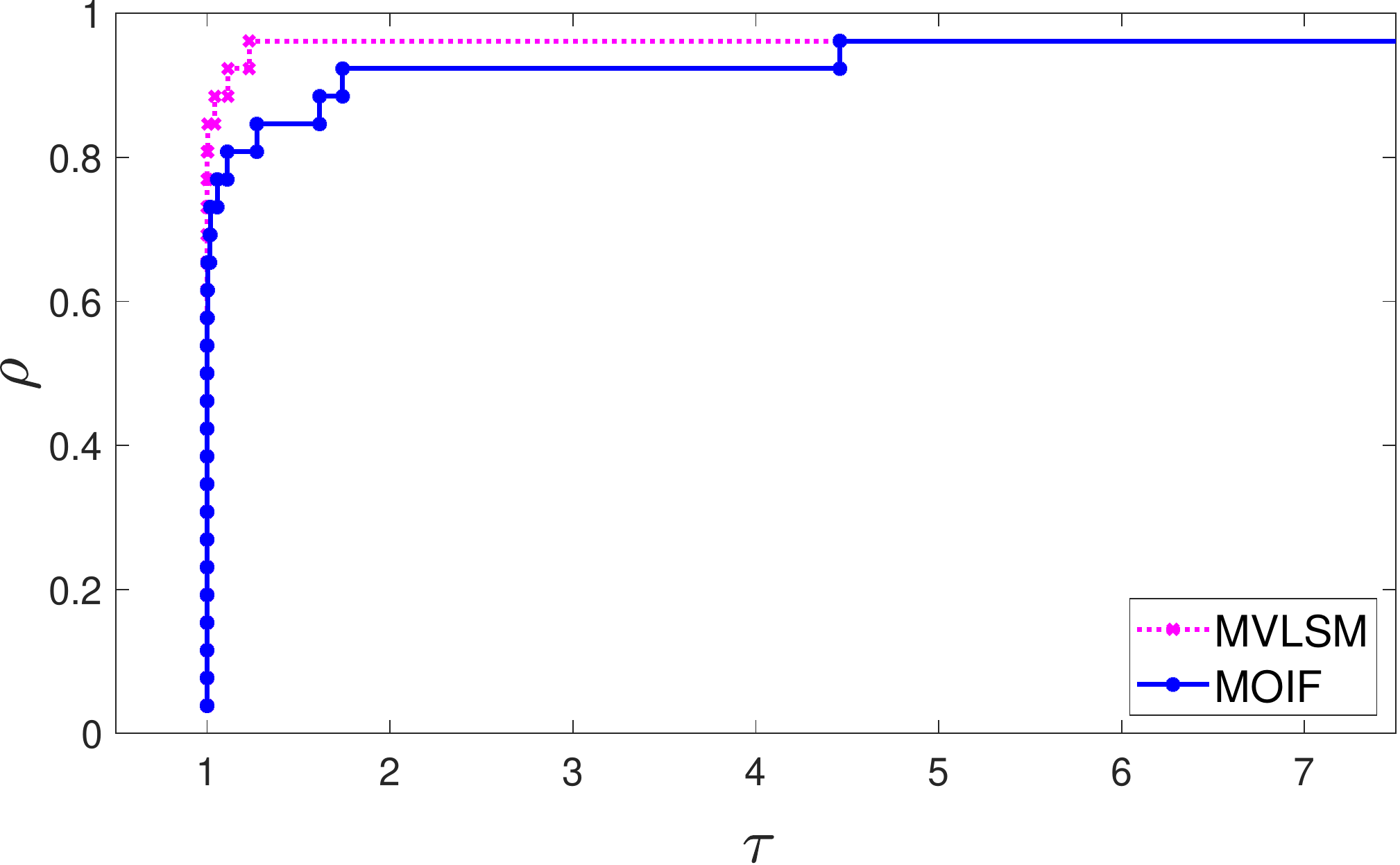}}
\subfigure[All]{\includegraphics[scale=0.3]{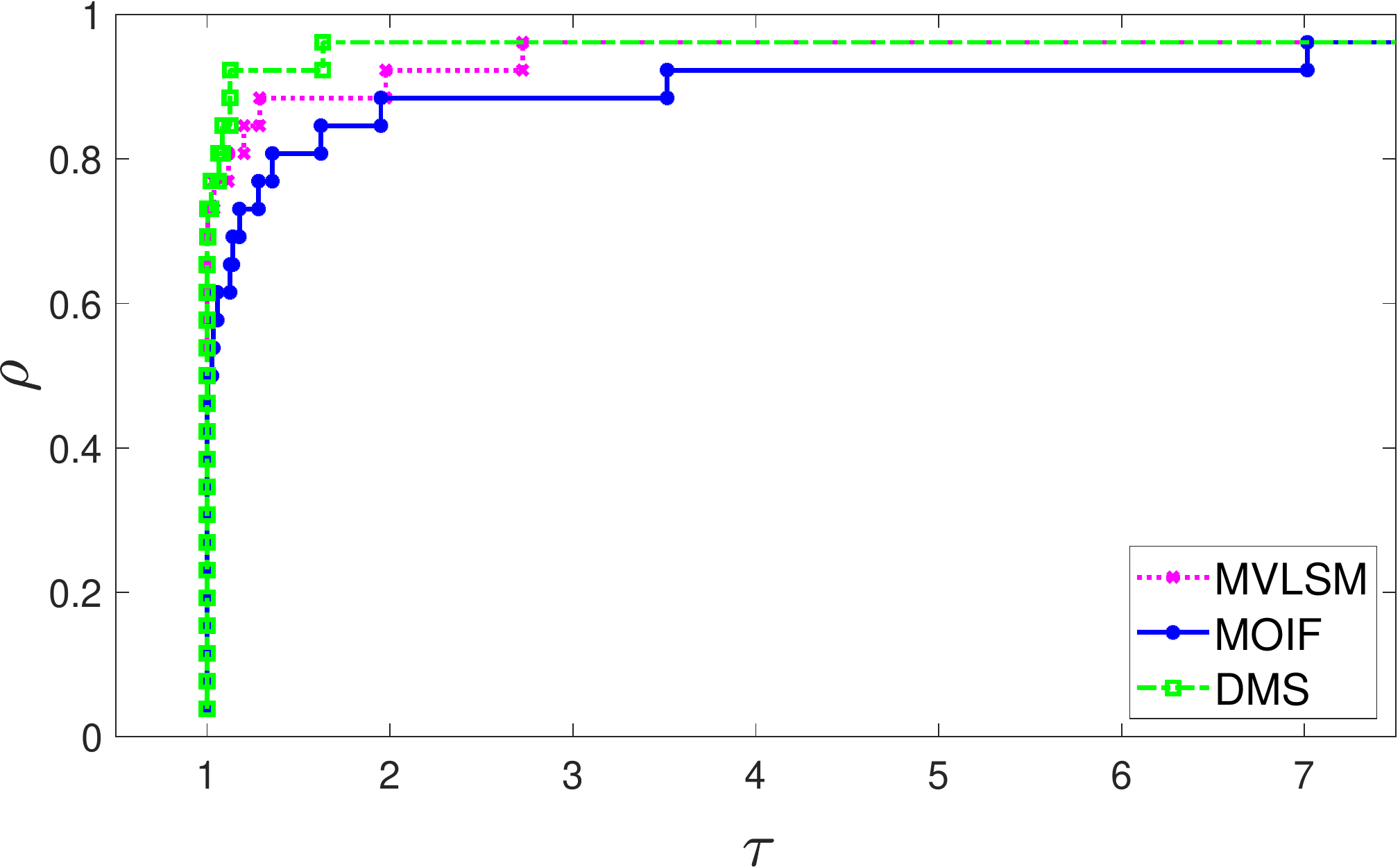}}
\caption{Comparing the MVLSM with DMS and MOIF based on purity performance profiles for multiobjective problems.}
\label{purity}
\end{figure}

Figure \ref{hypervolume} shows the performance profile using the hypervolume indicator   which represents the volume in the objective space dominated by a Pareto front approximation $Y_N$ and delimited above by an objective vector $v\in\mathbb{R}^r$ such that for all $y\in Y_N$, we have that $y < v$, as explained in \cite{zitzler1998}. According to this figure, DMS is more efficient than the other solvers. However, MVLSM is the most robust ones.

\begin{figure}[h!]
\centering
\subfigure[MVLSM versus DMS.]{\includegraphics[scale=0.3]{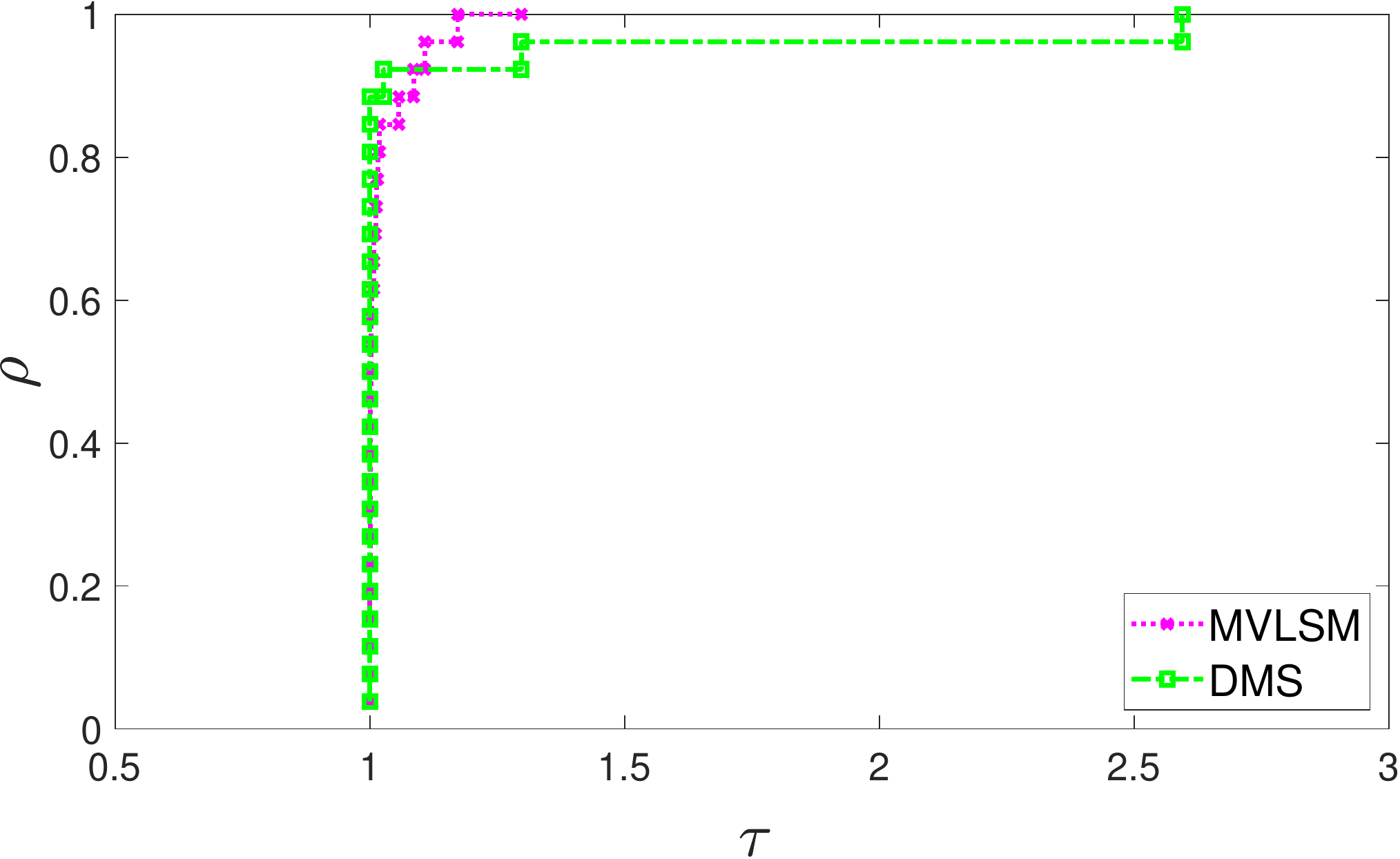}}
\subfigure[MVLSM versus MOIF.]{\includegraphics[scale=0.3]{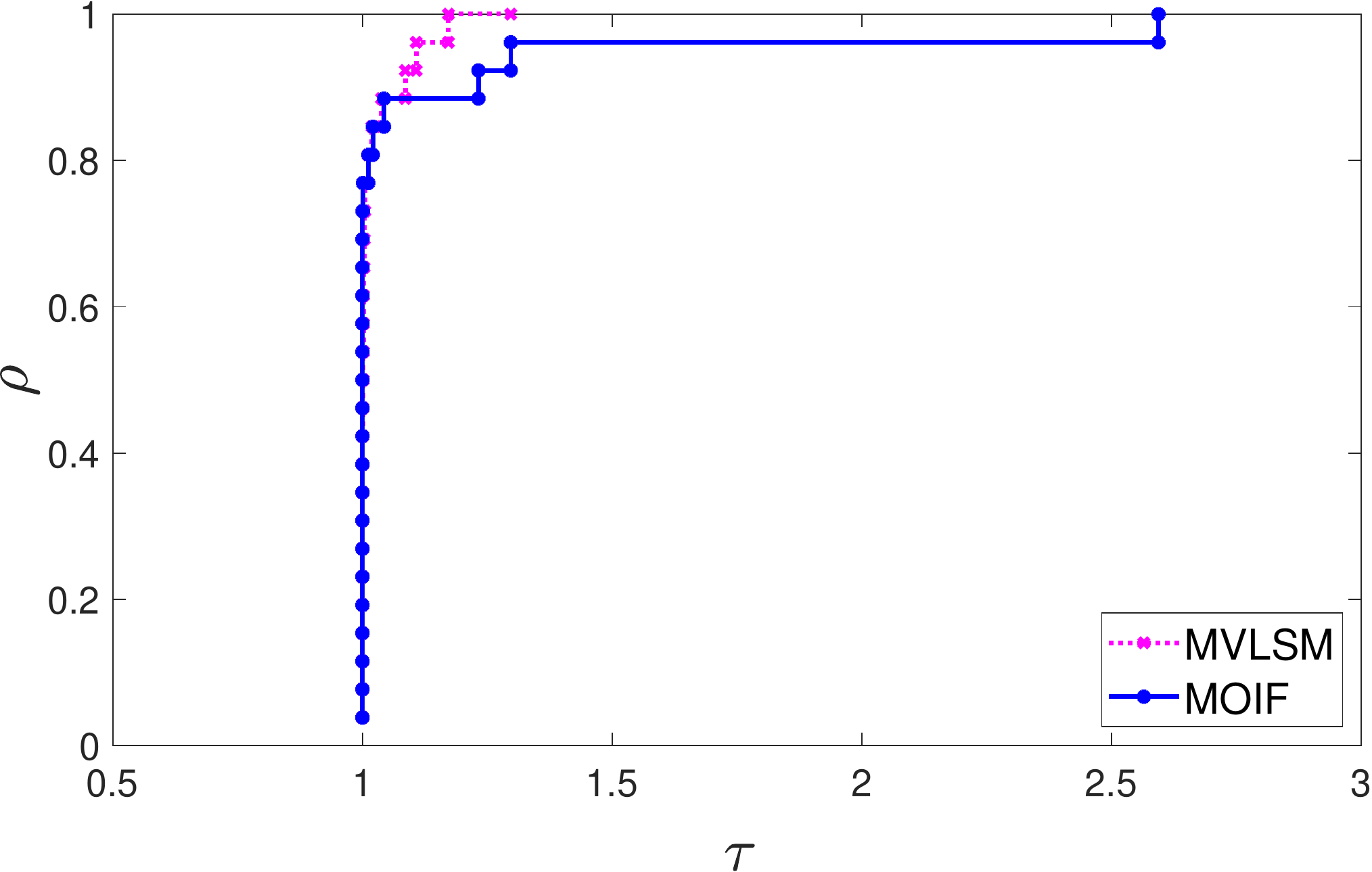}}
\subfigure[All.]{\includegraphics[scale=0.3]{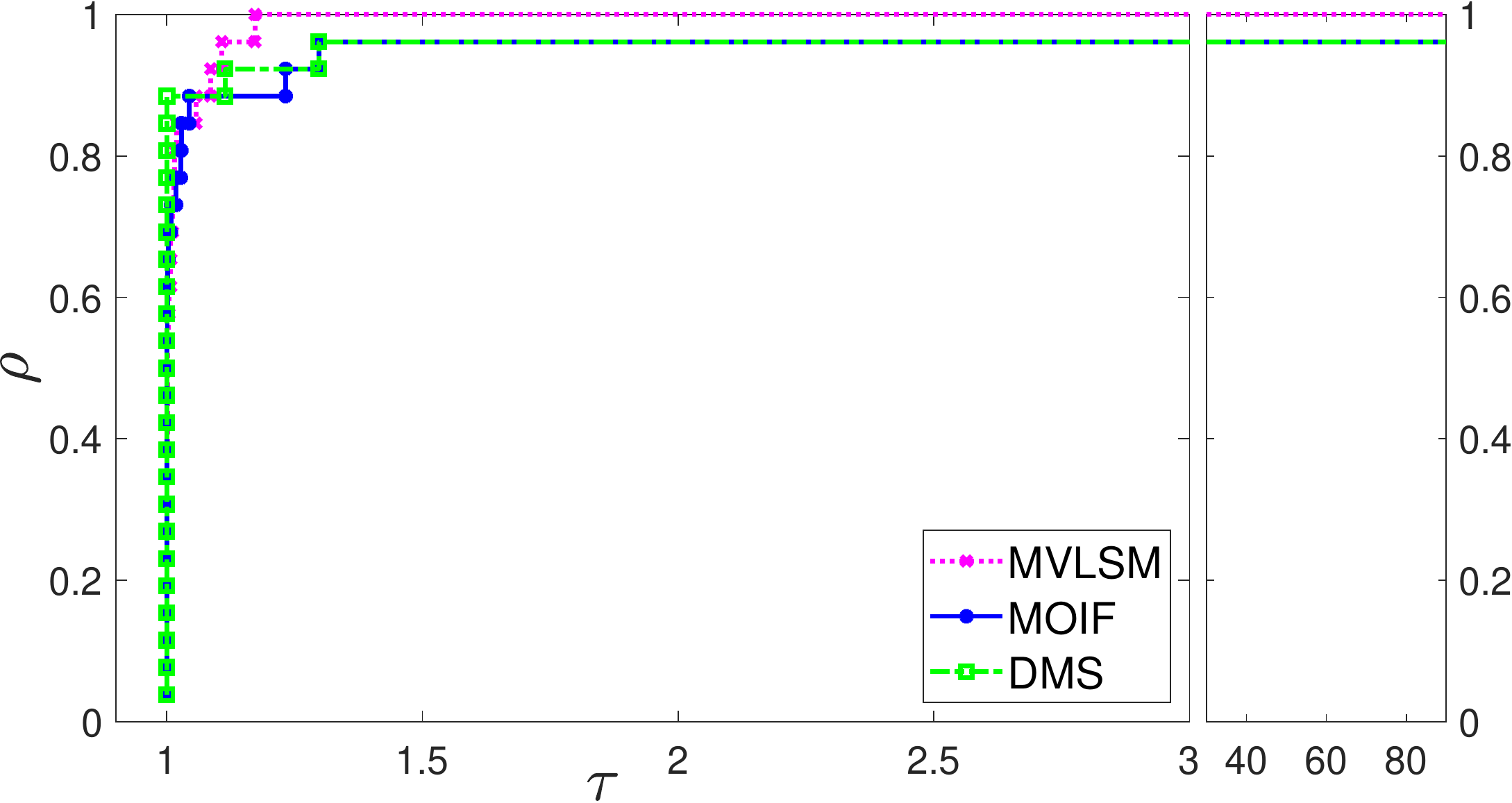}}
\caption{Comparing the MVLSM with DMS and MOIF based on hypervolume performance profiles for multiobjective problems.}
\label{hypervolume}
\end{figure}


\begin{table}[h!]
\centering
\begin{tabular}{|l|c|c|}\hline
 Problem & Objective functions & Pareto front\\
\hline
 &\multirow{3}{*}{\includegraphics[scale=0.47]{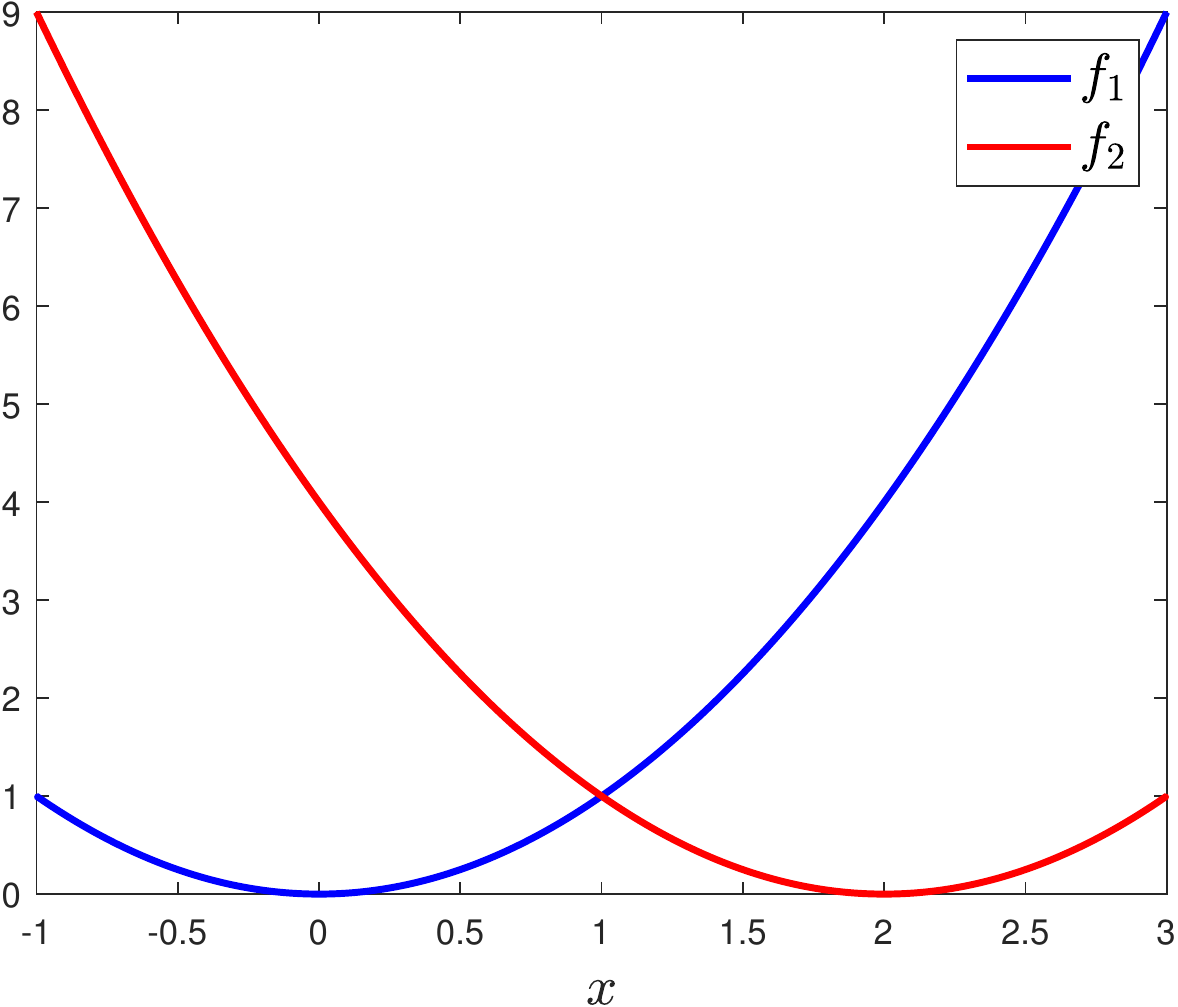}}
 &\multirow{3}{*}{\includegraphics[scale=0.47]{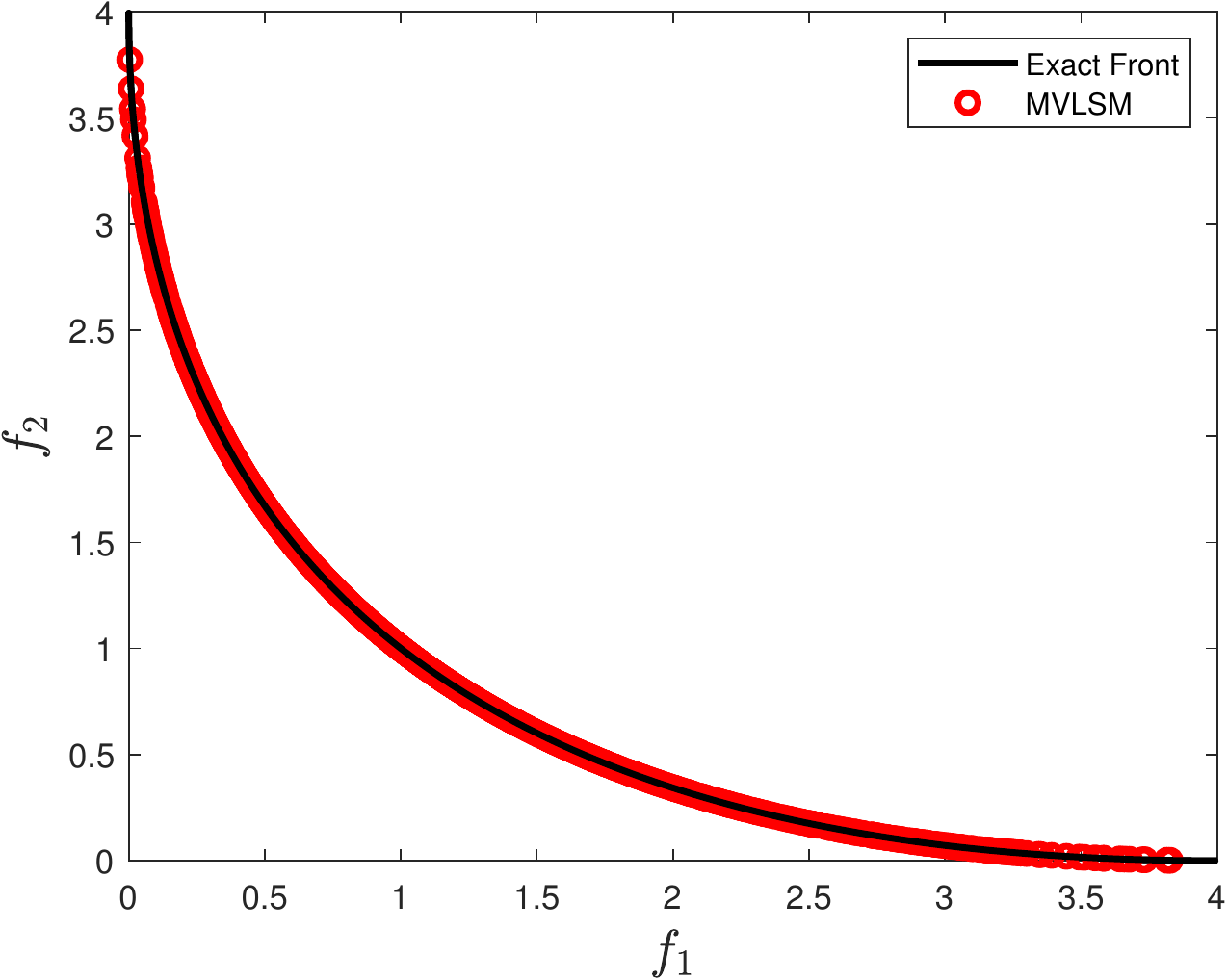}}\\
\cite[MOP 13]{veld1999}& &\\
\cite[SCH1]{deb2005}& &\\
& &\\
$n=1$ & & \\
$r=2$ & &\\
& &\\
& &\\
$T=0.0973$ sec & &\\
$\overline{k}=50.4$ & &\\
& &\\
\hline
&\multirow{3}{*}{\includegraphics[scale=0.47]{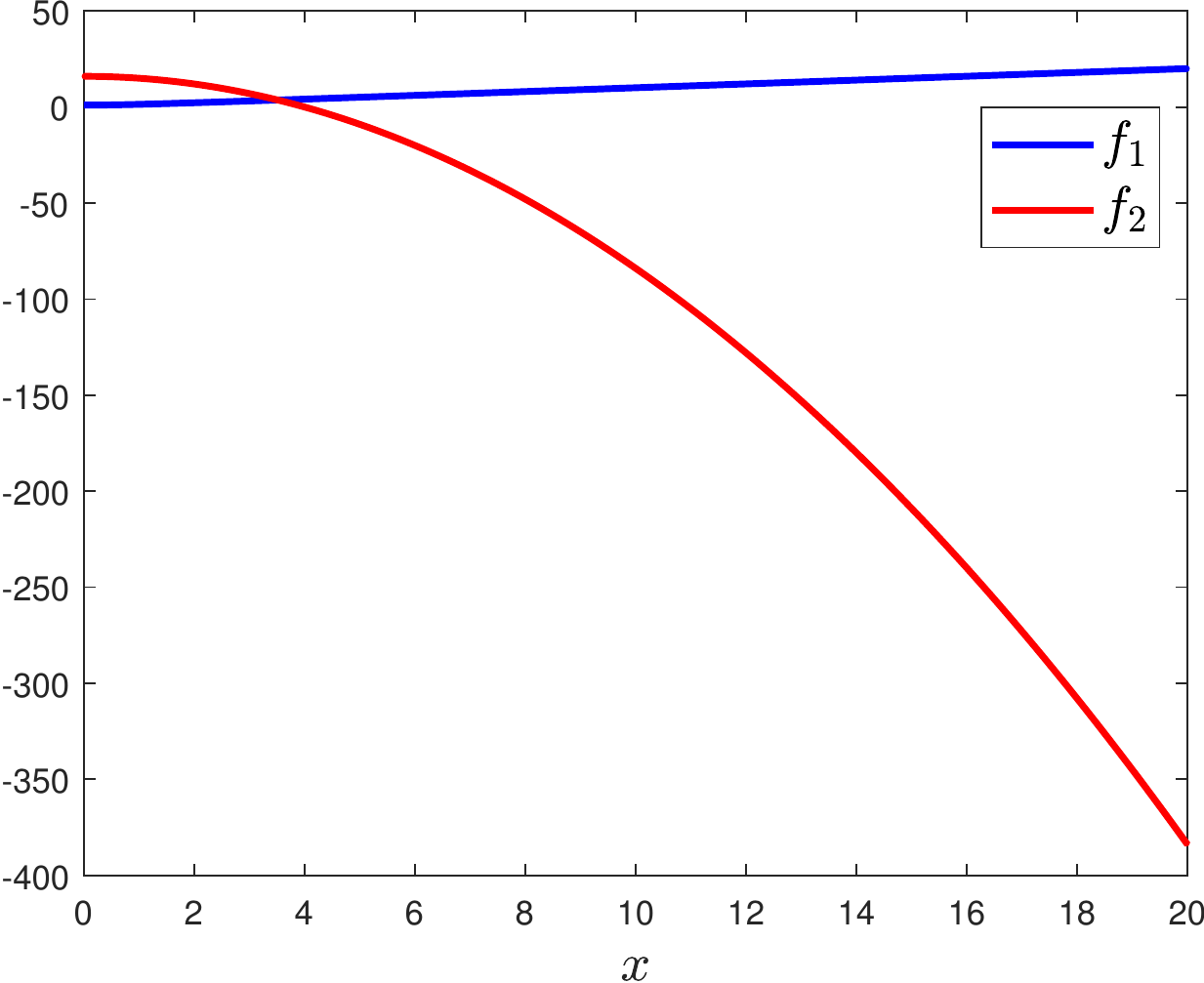}}
 &\multirow{3}{*}{\includegraphics[scale=0.47]{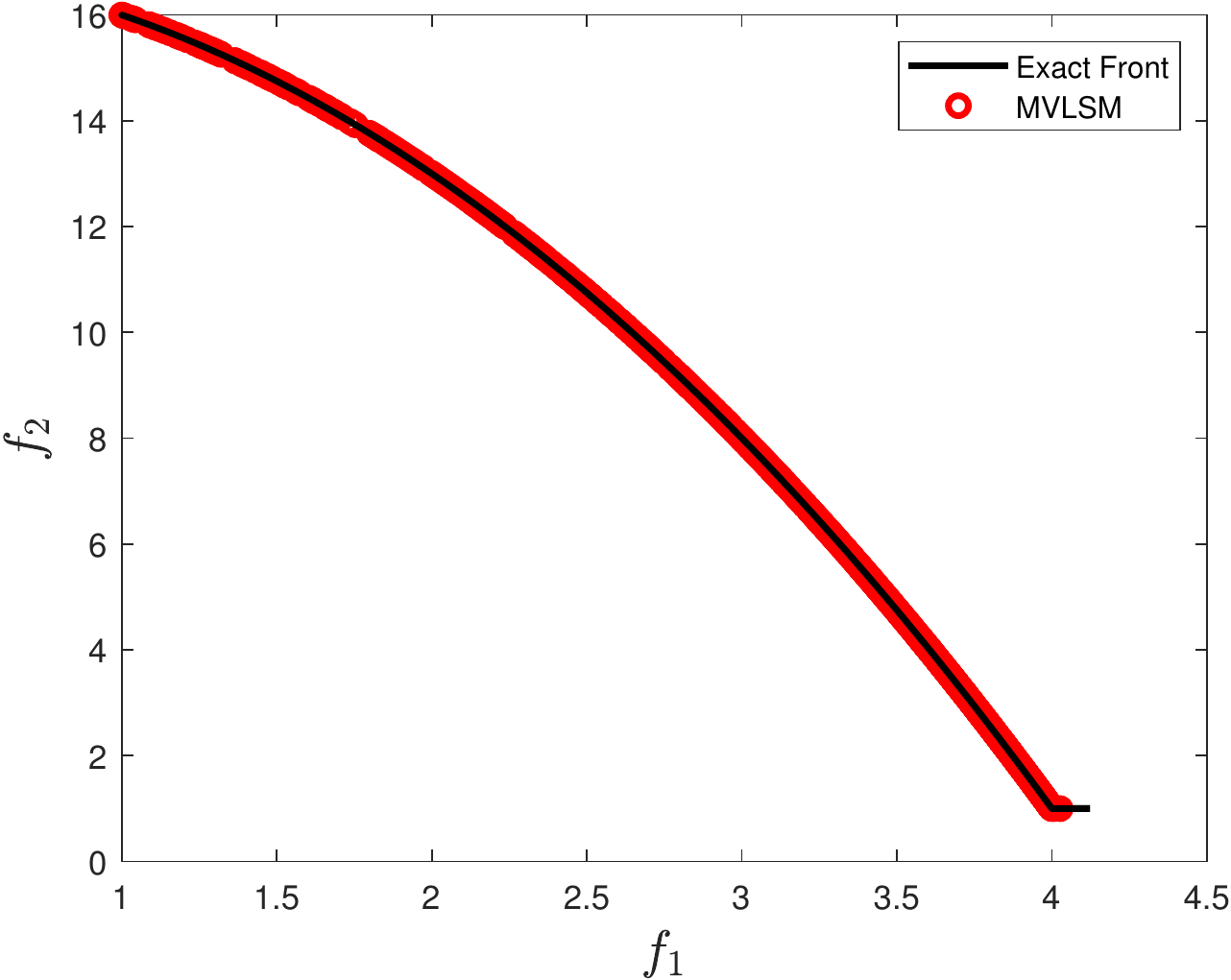}}\\
\cite[Example 9]{chankong1983}& &\\
& &\\
& &\\
$n=1$ & & \\
$r=2$ & &\\
& &\\
& &\\
$T=0.1972$ sec & &\\
$\overline{k}=97.5$ 
& &\\
& &\\
\hline
&\multirow{3}{*}{\includegraphics[scale=0.47]{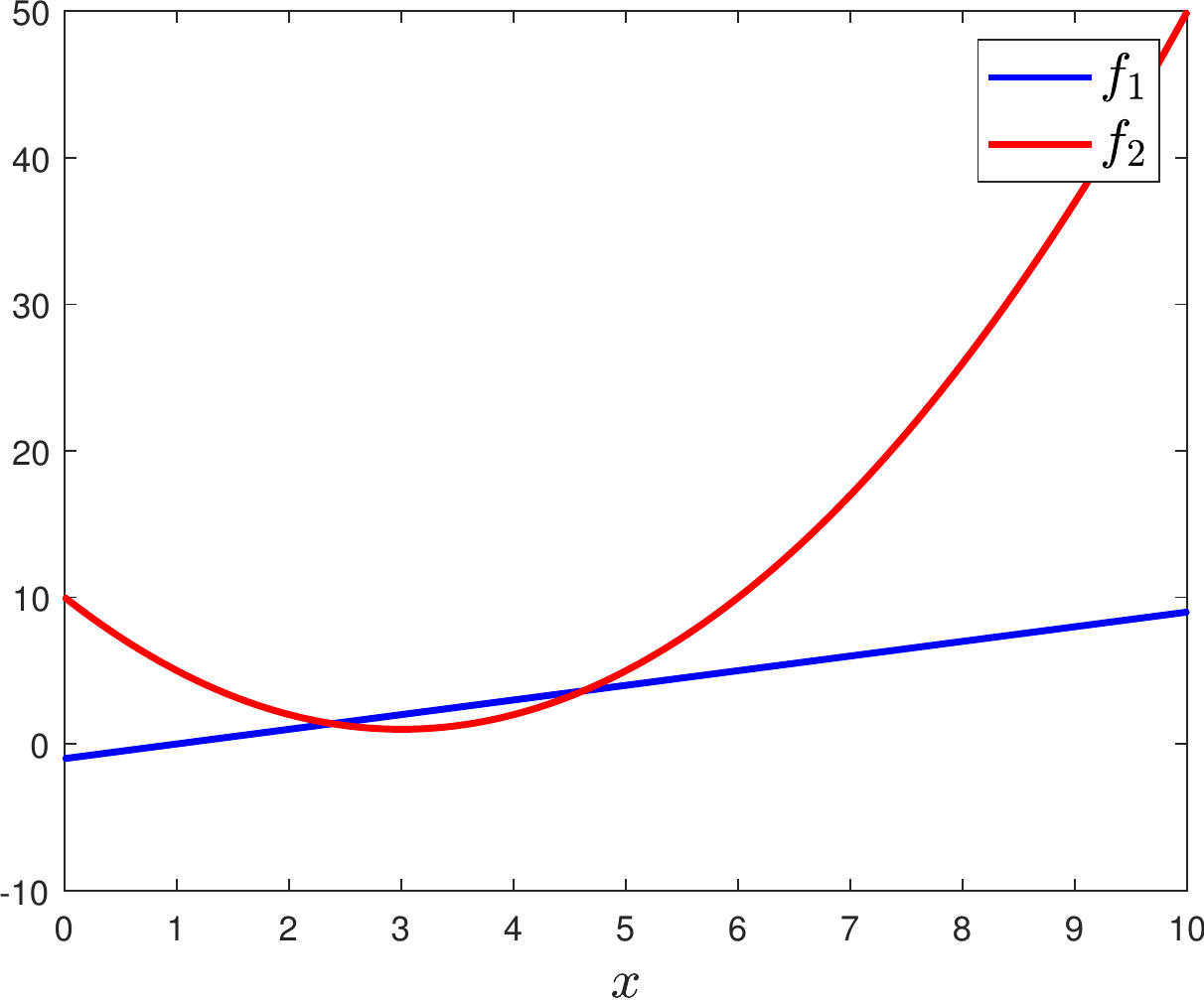}}
 &\multirow{3}{*}{\includegraphics[scale=0.47]{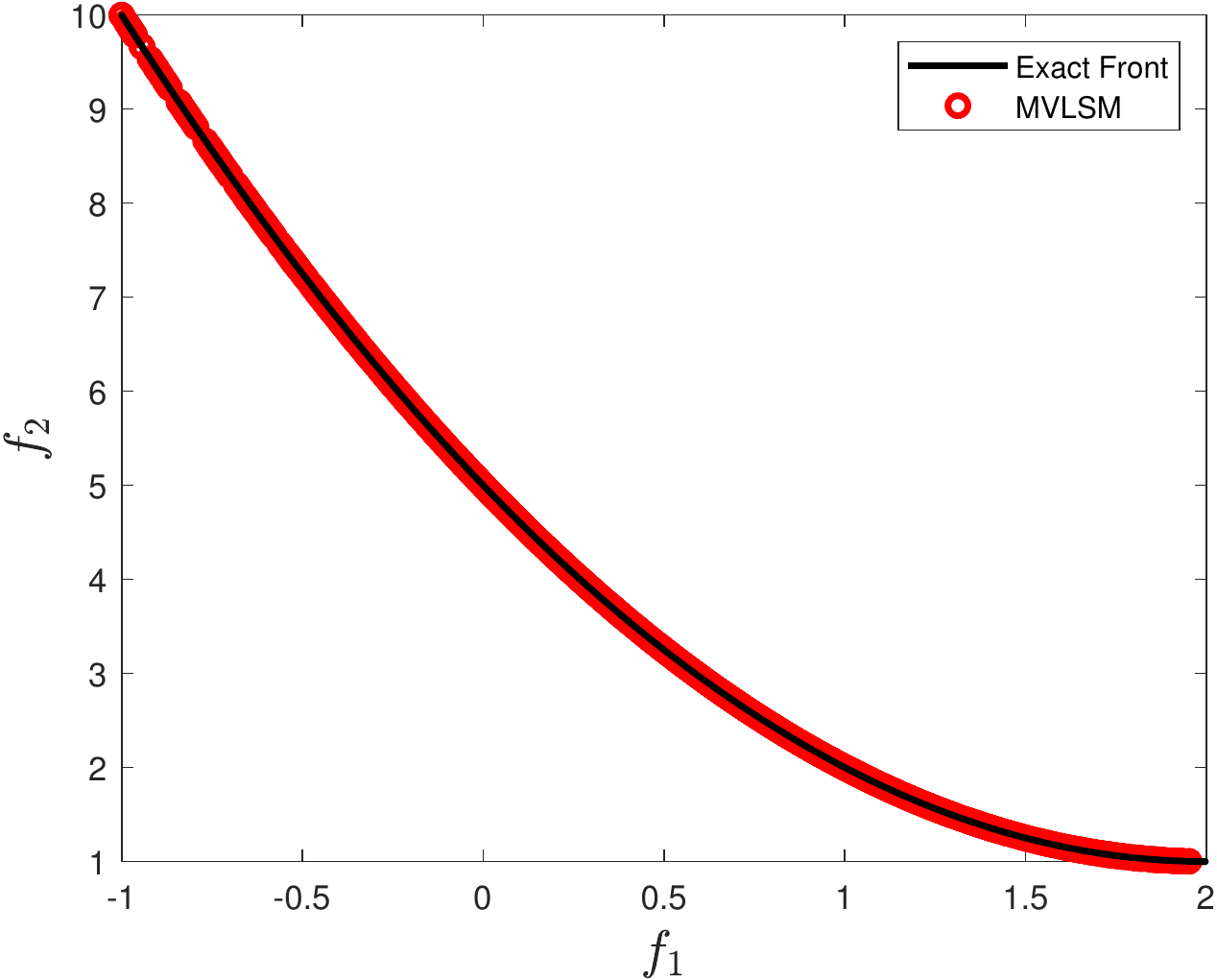}}\\
\cite[Problem 4.7]{chankong1983}& &\\
& &\\
& &\\
$n=1$ & & \\
$ r=2$ & &\\
& &\\
& &\\
$T=0.1653$ sec & &\\
$\overline{k}=85.0$ 
& &\\
& &\\
\hline
&\multirow{3}{*}{\includegraphics[scale=0.47]{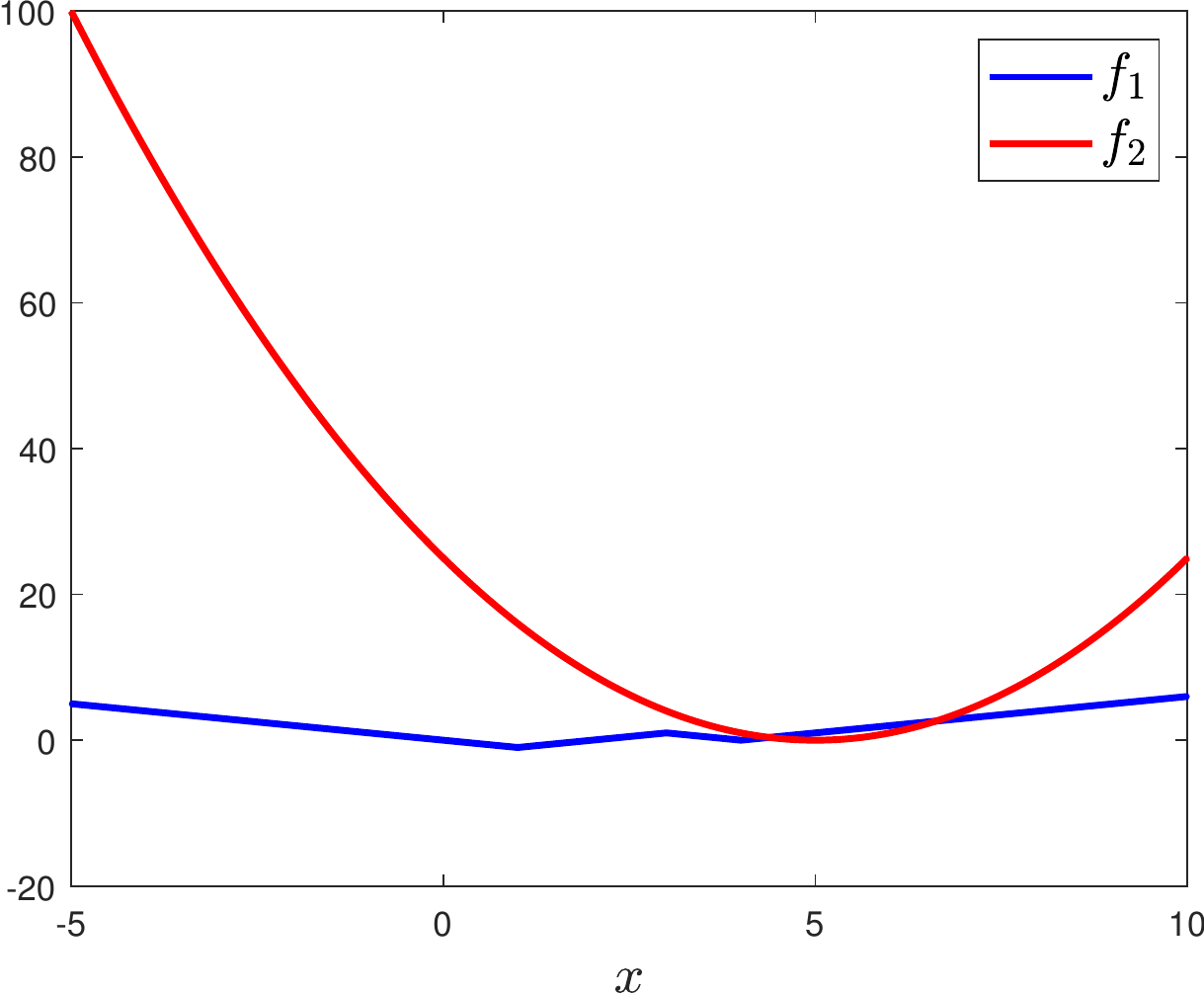}}
 &\multirow{3}{*}{\includegraphics[scale=0.47]{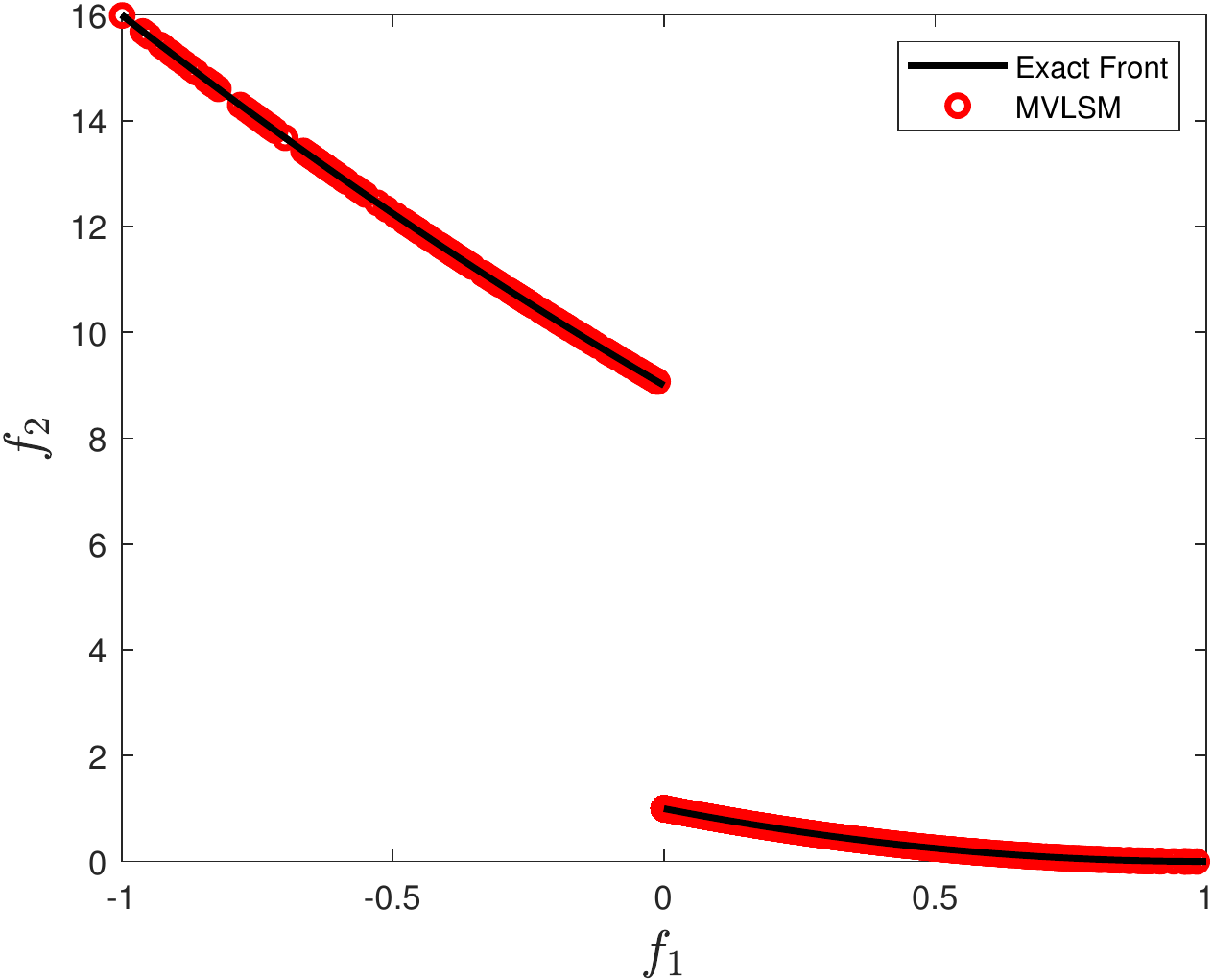}}\\
\cite[MOP 14]{veld1999}& &\\
& &\\
& &\\
$n=1$ & & \\
$r=2$ & &\\
& &\\
& &\\
$T=0.0867$ sec & &\\
$\overline{k}=45.7$ 
& &\\
& &\\
\hline
\end{tabular}
\caption{Results for problems with dimension $n=1$.}
\label{table1}
\end{table}

\begin{table}[h!]
\centering
\begin{tabular}{|l|c|c|}\hline
problems & Objective functions & Pareto front\\
\hline
&\multirow{3}{*}{\includegraphics[scale=0.47]{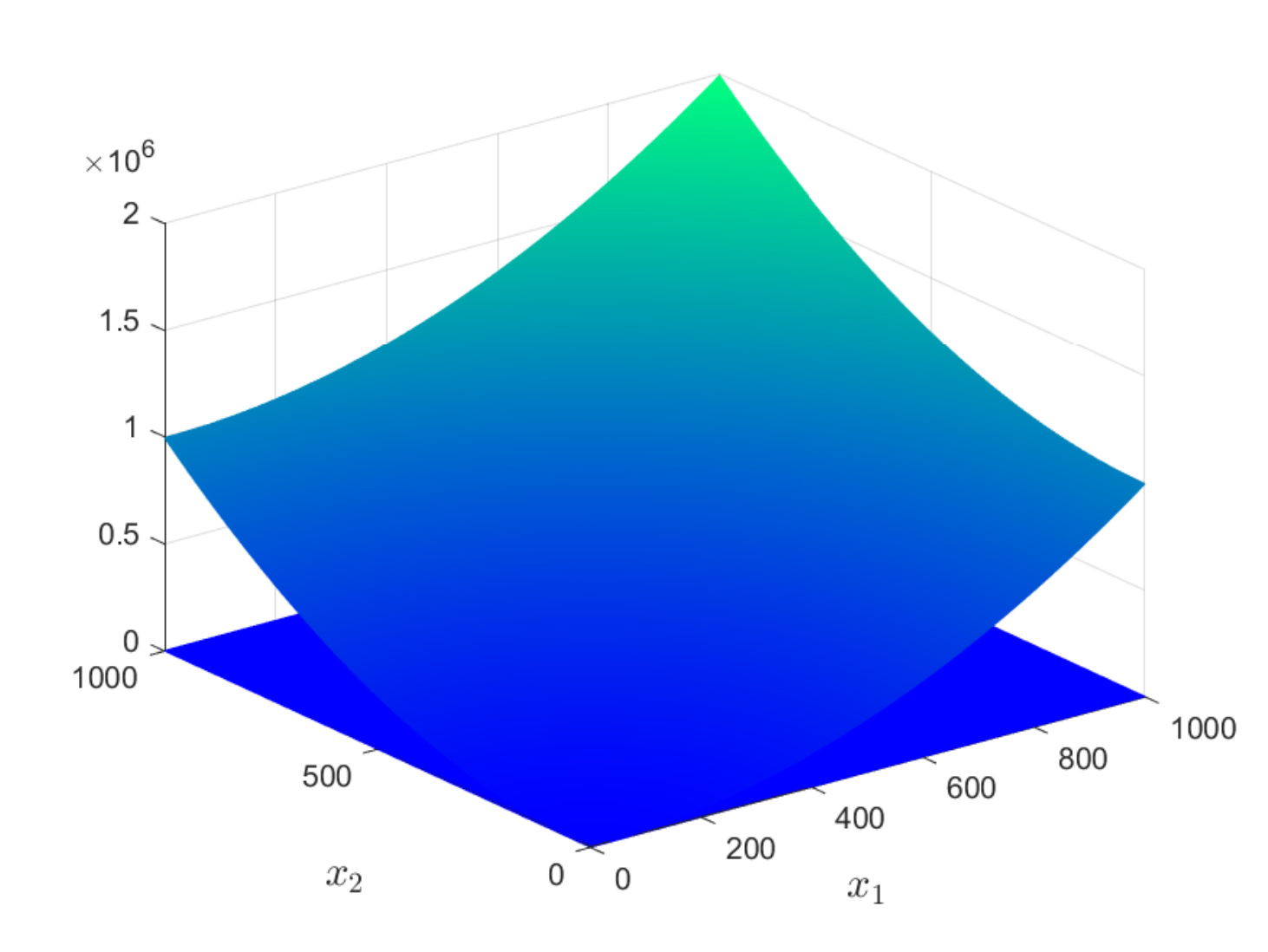} }
 &\multirow{3}{*}{\includegraphics[scale=0.47]{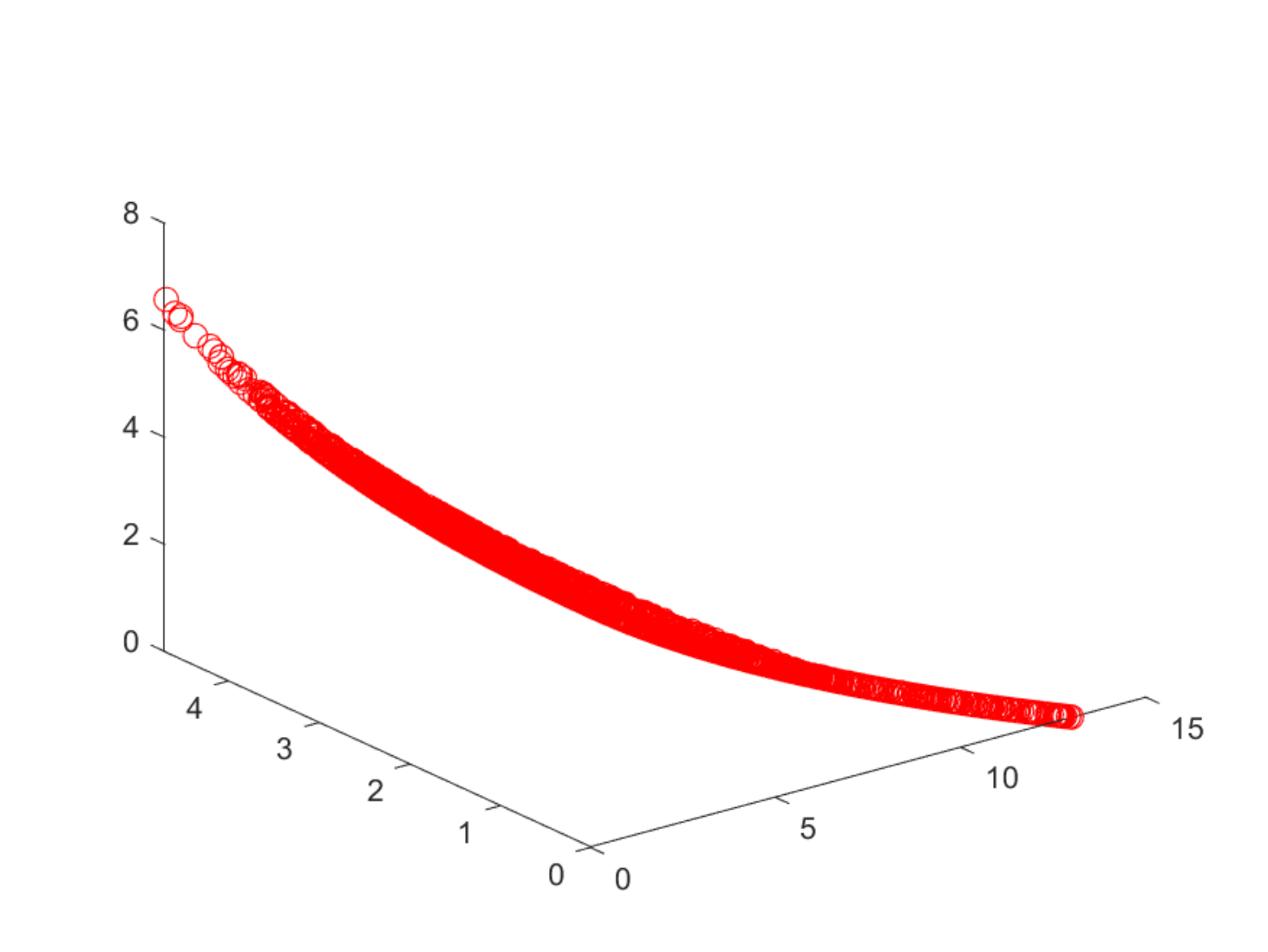}}\\
\cite[Example 4.3.6]{chankong1983} & &\\
& &\\
& &\\
$n=2$ & & \\
$r=3$ & &\\
& &\\
& &\\
$T= 0.0610$ sec & &\\
$\overline{k}=32.1$ & &\\
& &\\
\hline
&\multirow{3}{*}{\includegraphics[scale=0.47]{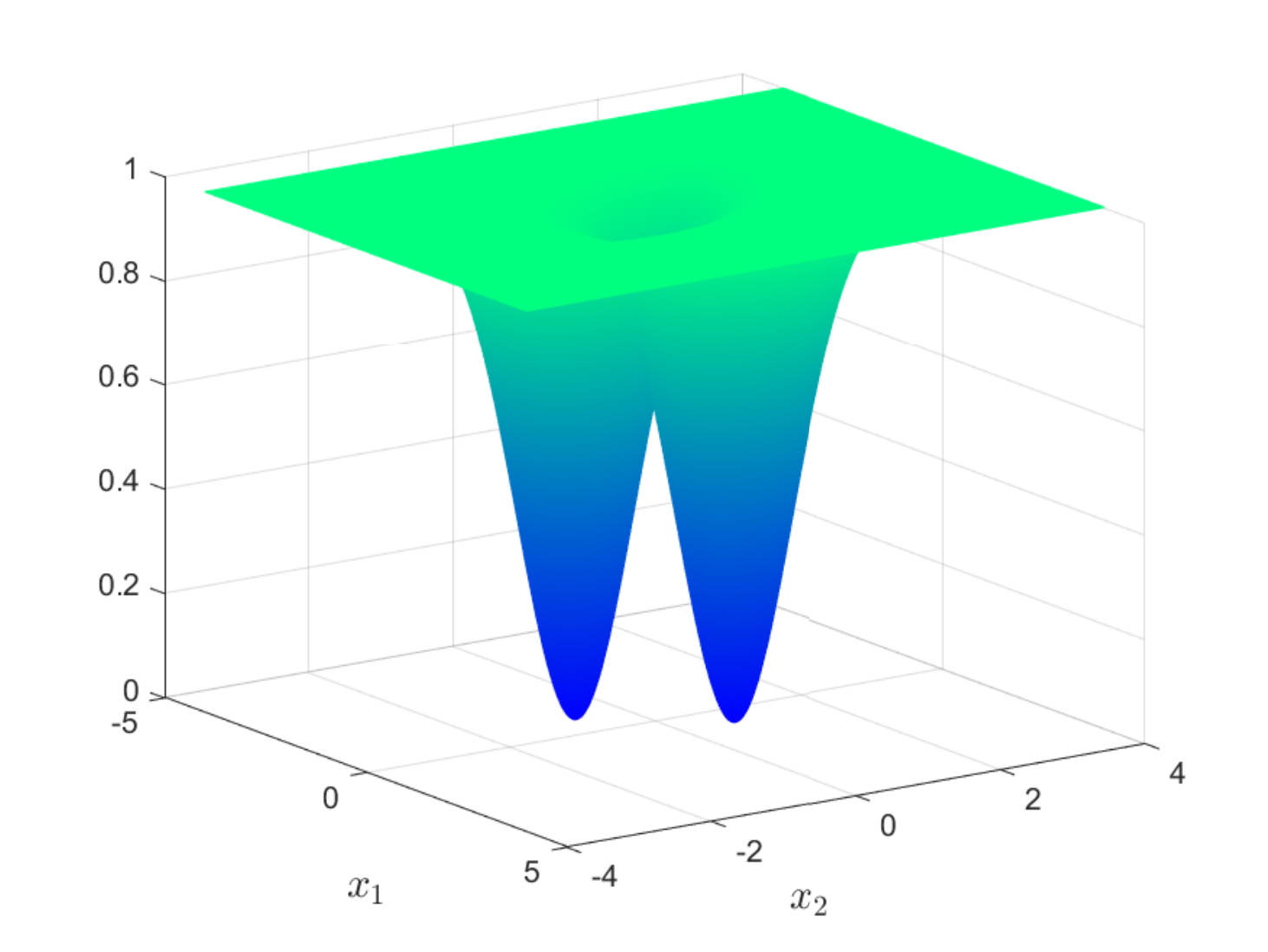}}
 &\multirow{3}{*}{\includegraphics[scale=0.47]{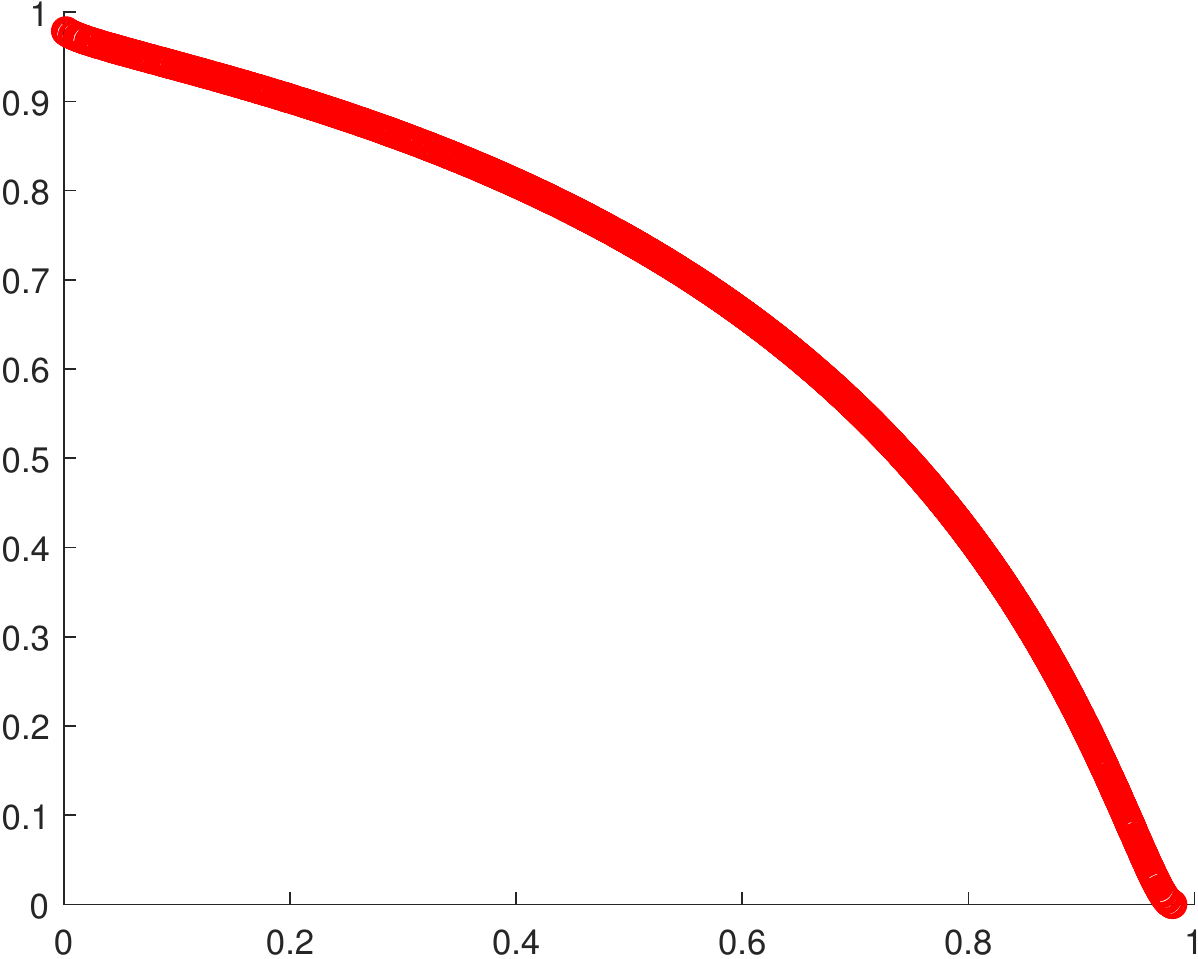}}\\ 
\cite[(6.1)]{deb2005},& &\\
\cite[MOP 4]{veld1999}& &\\
& &\\
$n=2$ & & \\
$r=2$ & &\\
& &\\
 & &\\
$T=0.0391$ sec & &\\
$\overline{k}=20.4$ & &\\
& &\\
\hline
 &\multirow{3}{*}{\includegraphics[scale=0.47]{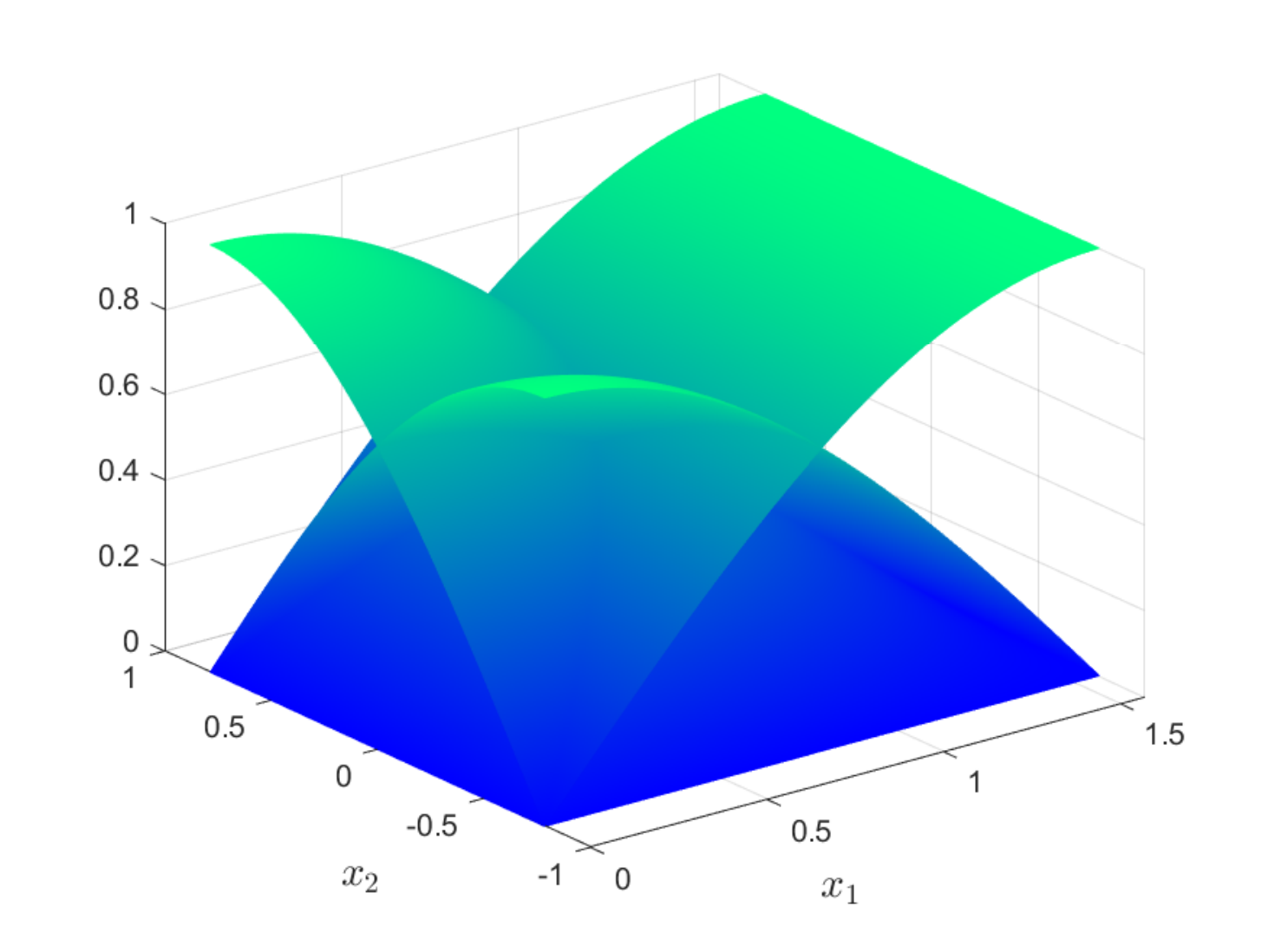}}
 &\multirow{3}{*}{\includegraphics[scale=0.47]{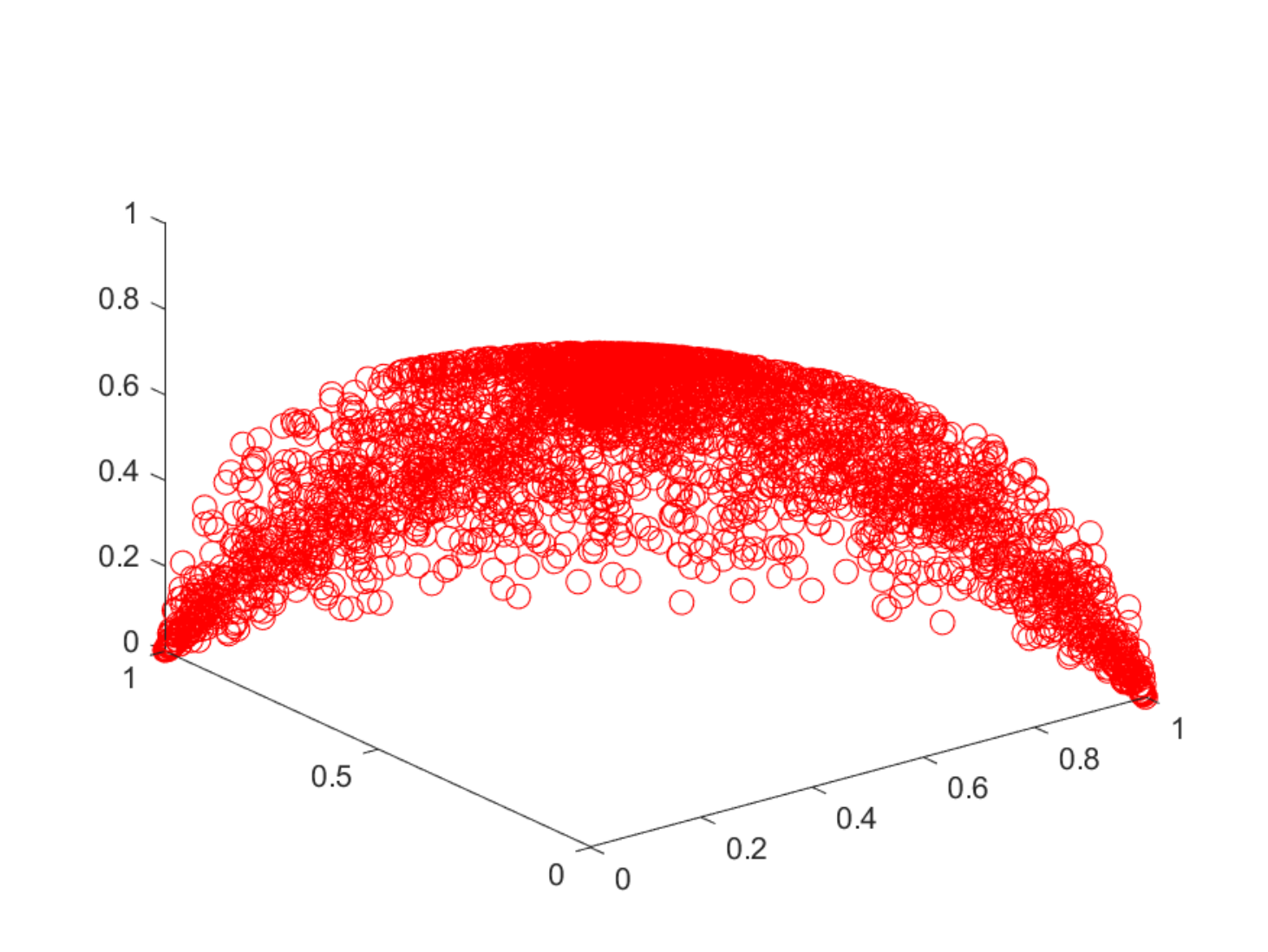}}\\
\cite[(6.2)]{deb2005} & &\\
& &\\
& &\\
$n=2$ & & \\
$r=3$ & &\\
& &\\
 & &\\
$T=0.0427$ sec & &\\
$\overline{k}=20.4$ & &\\
& &\\
\hline
 &\multirow{3}{*}{\includegraphics[scale=0.47]{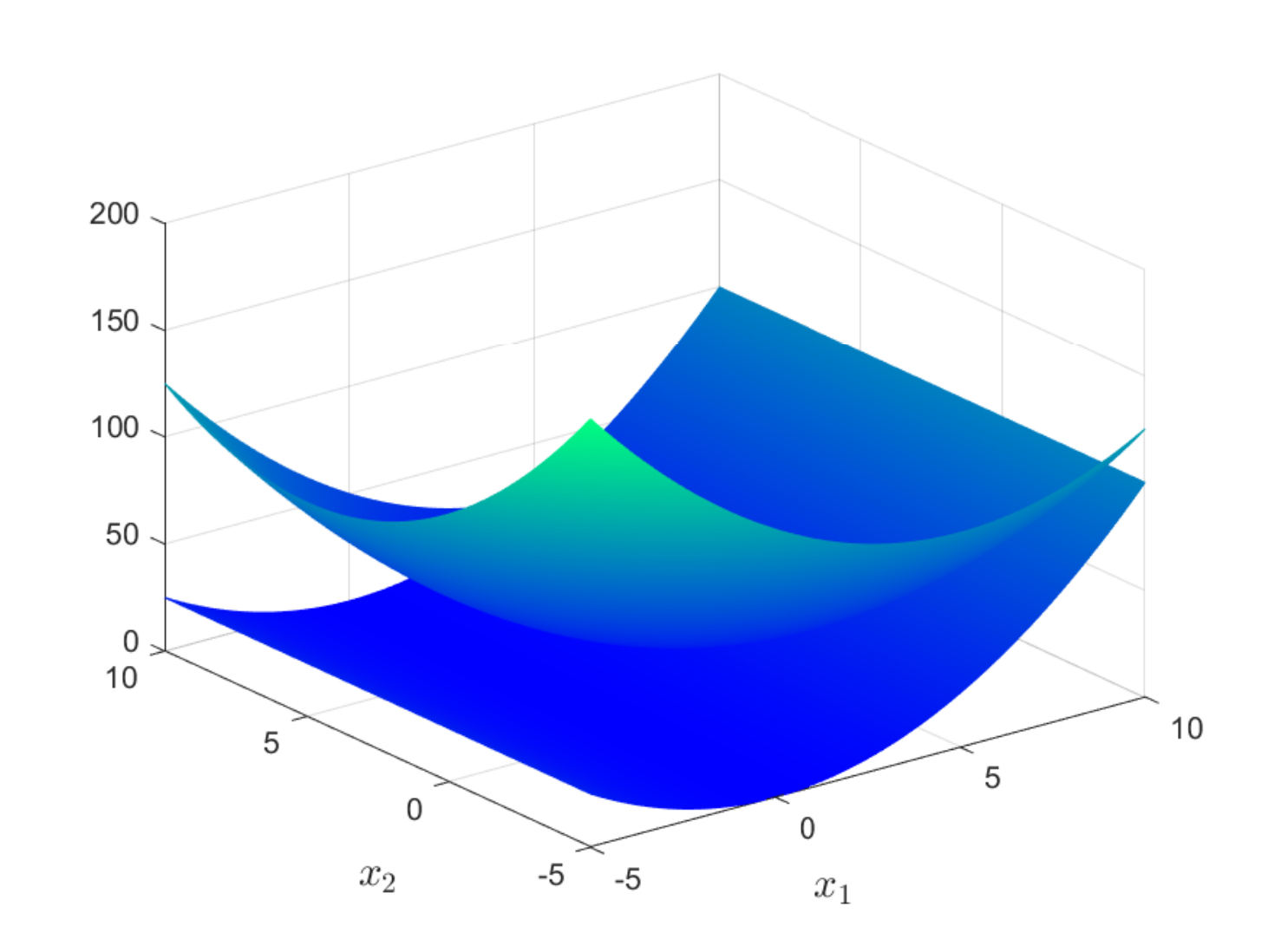}}
 &\multirow{3}{*}{\includegraphics[scale=0.47]{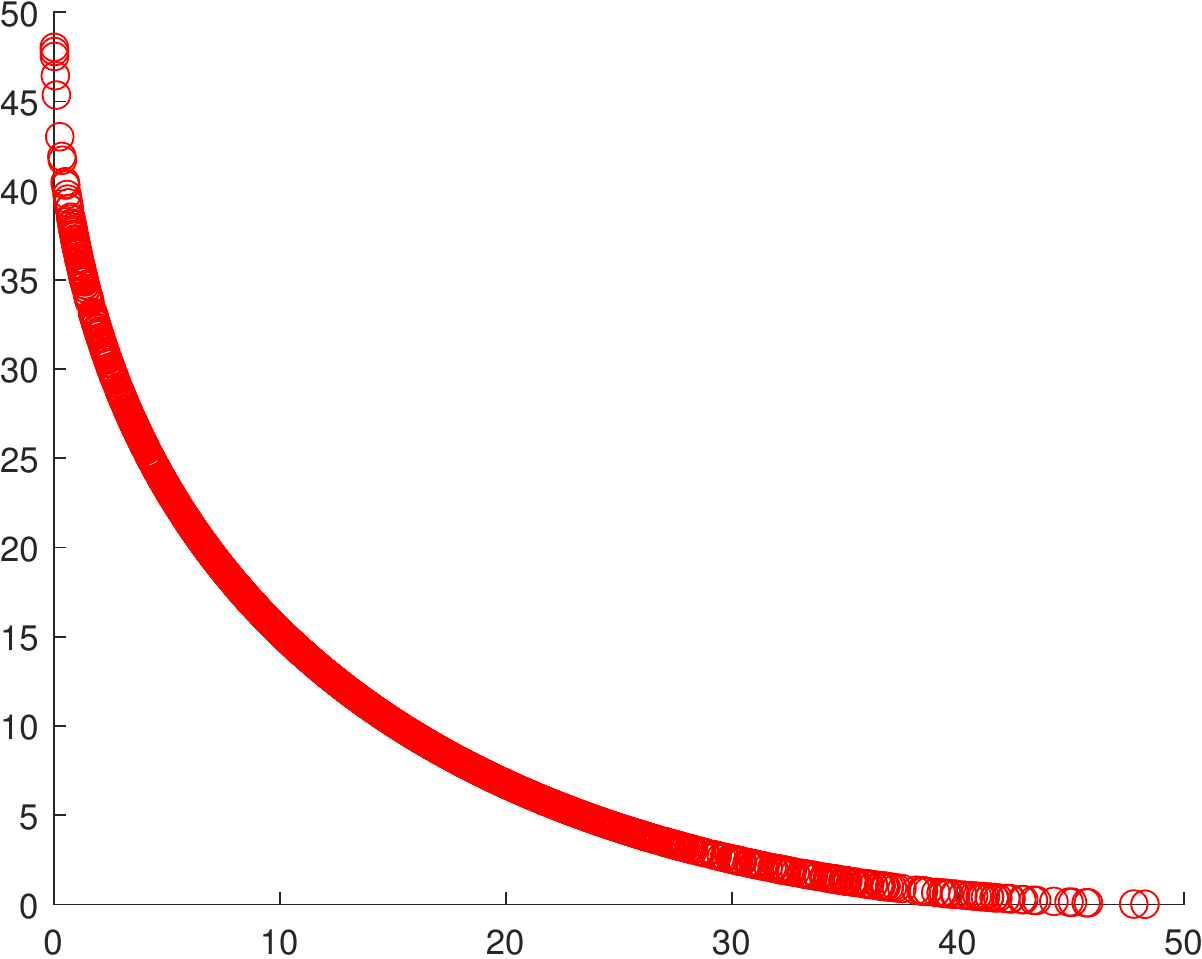}}\\
\cite[MOP 1]{veld1999} & &\\
& &\\
& &\\
$n=2$ & & \\
$r=2$ & &\\
& &\\
& &\\
$T=0.0427$ sec & &\\
$\overline{k}=23.1$ & &\\
& &\\
\hline
\end{tabular}
\caption{Results for problems with dimension $n=2$.}
\label{table2}
\end{table}

\begin{table}[h!]
\centering
\begin{tabular}{|l|c|c|}\hline
Problem & Objective functions & Pareto front\\
\hline
&\multirow{3}{*}{\includegraphics[scale=0.47]{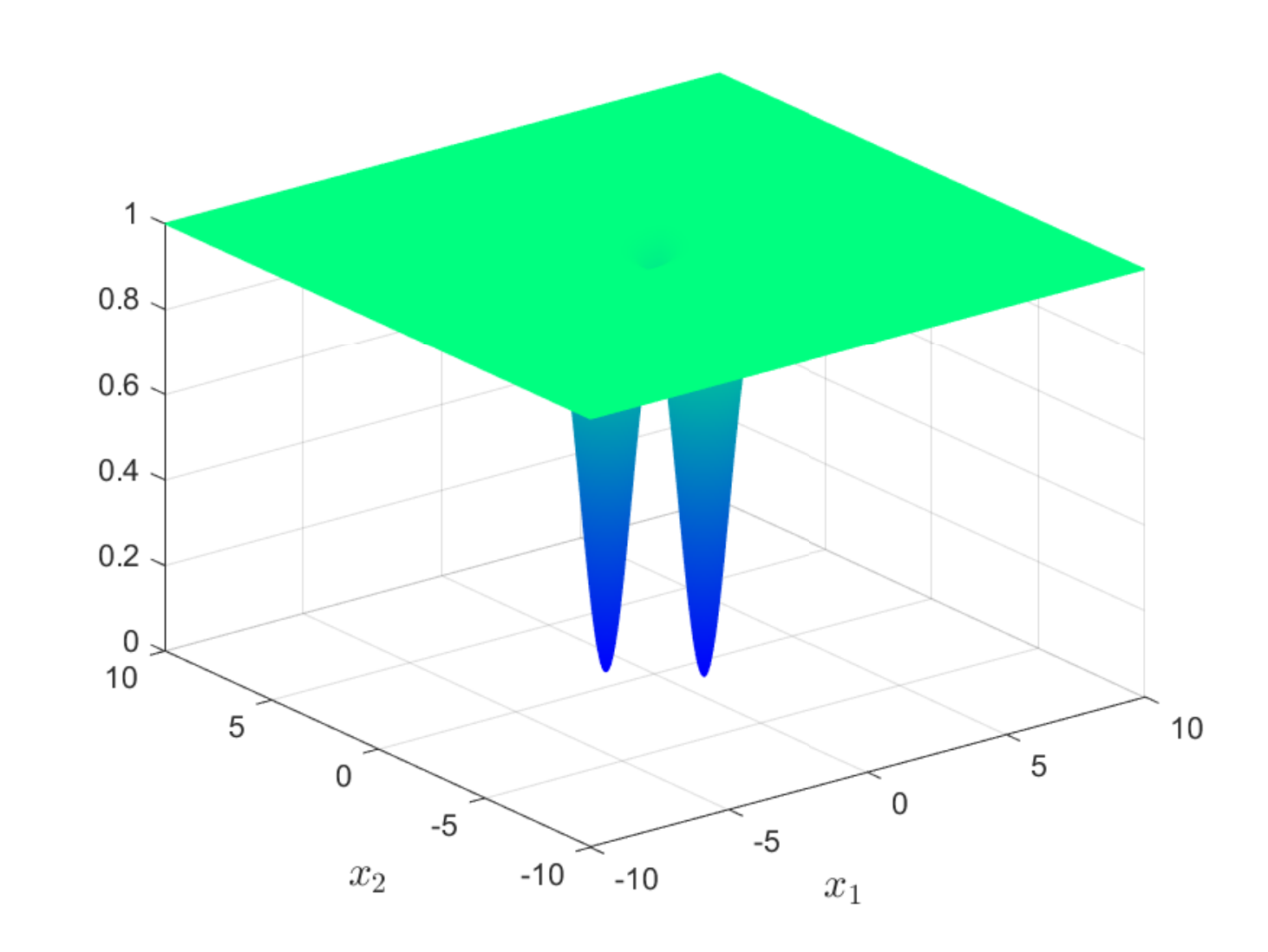}}
 &\multirow{3}{*}{\includegraphics[scale=0.47]{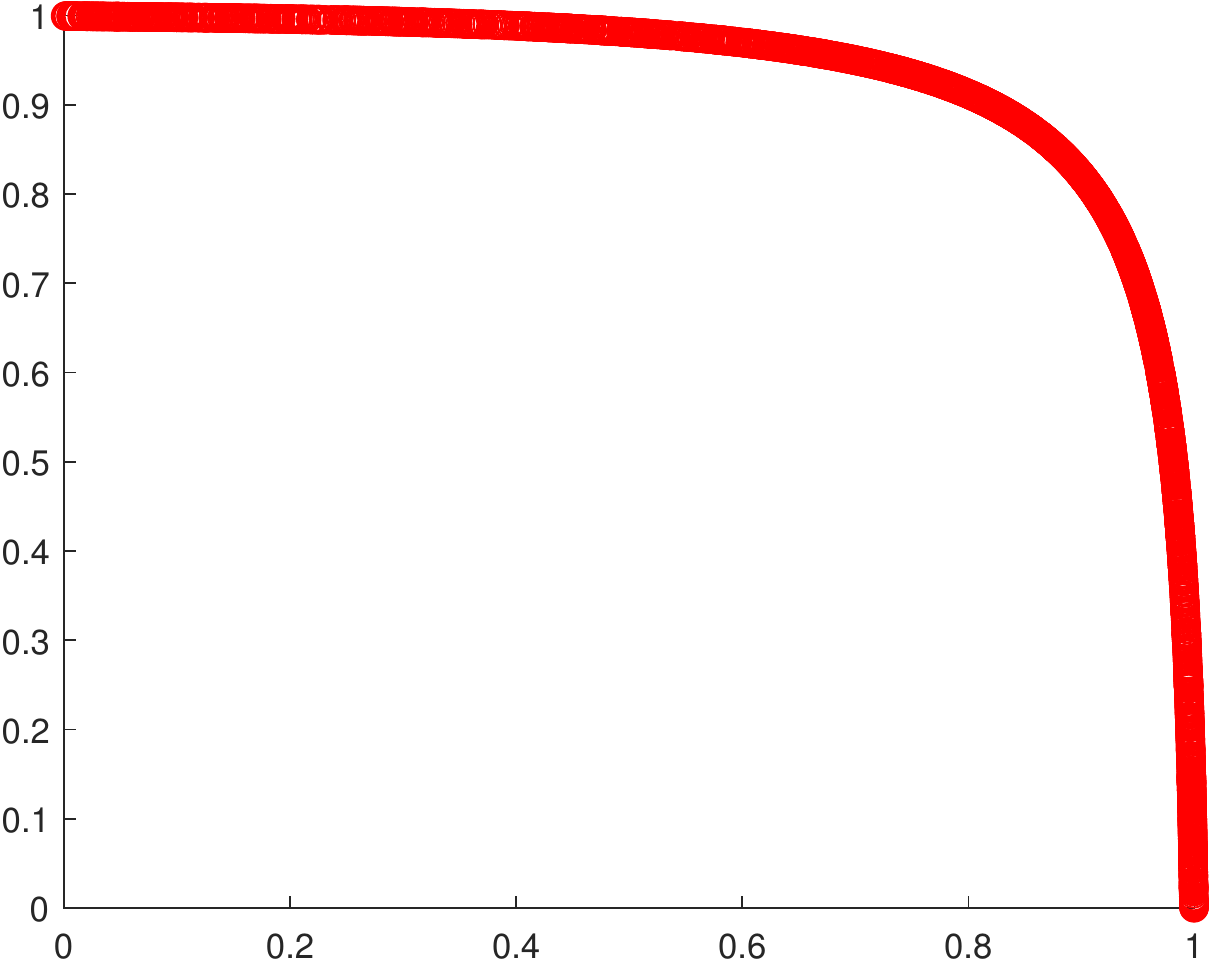}}\\
\cite[MOP 3]{veld1999} & &\\
& &\\
& &\\
$n=2$ & & \\
$r=2$ & &\\
 & &\\
& &\\
$T=0.0283$ sec & &\\
$\overline{k}=14.0$ & &\\
& &\\
\hline
 &\multirow{3}{*}{\includegraphics[scale=0.47]{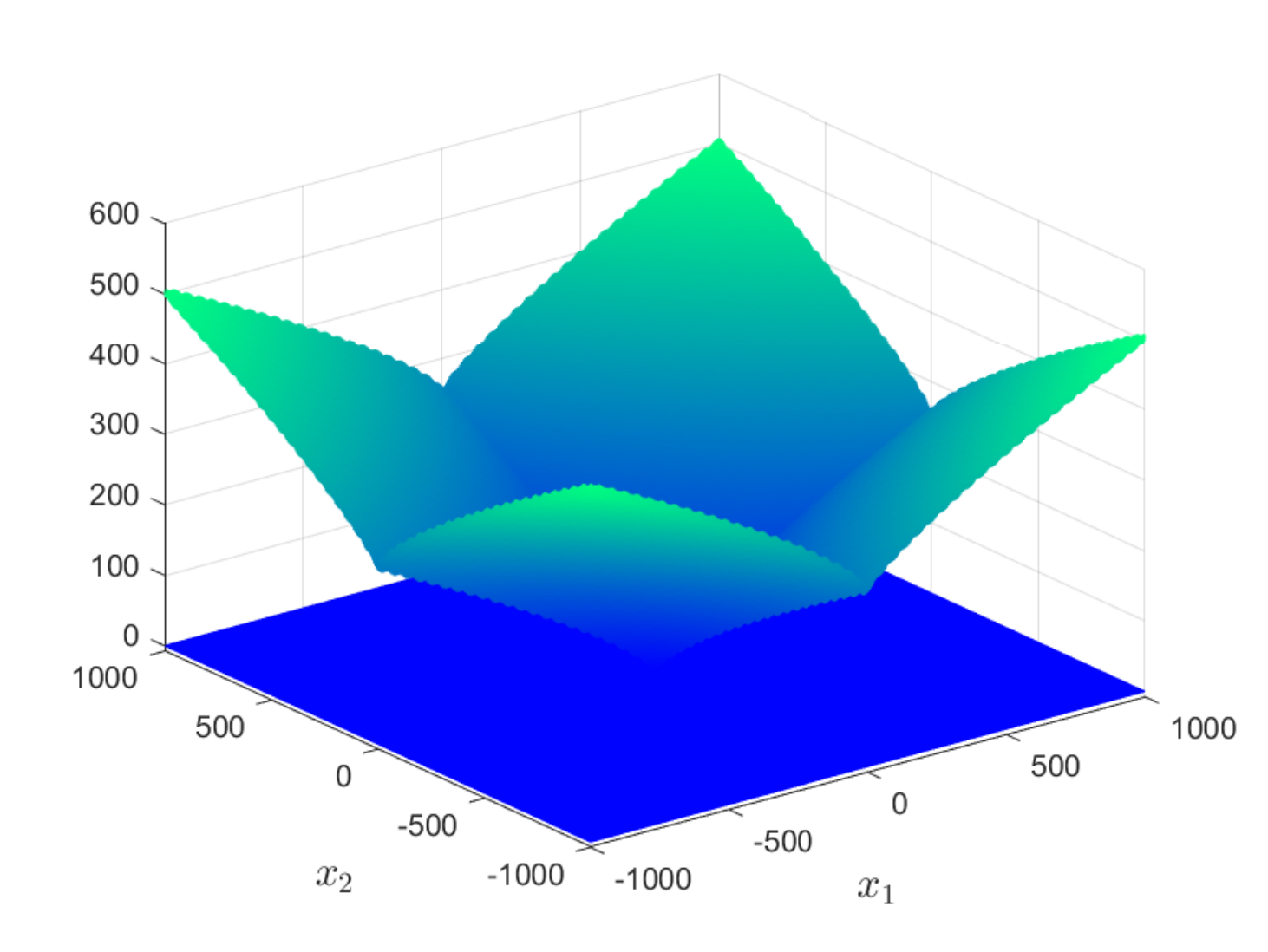}}
 &\multirow{3}{*}{\includegraphics[scale=0.47]{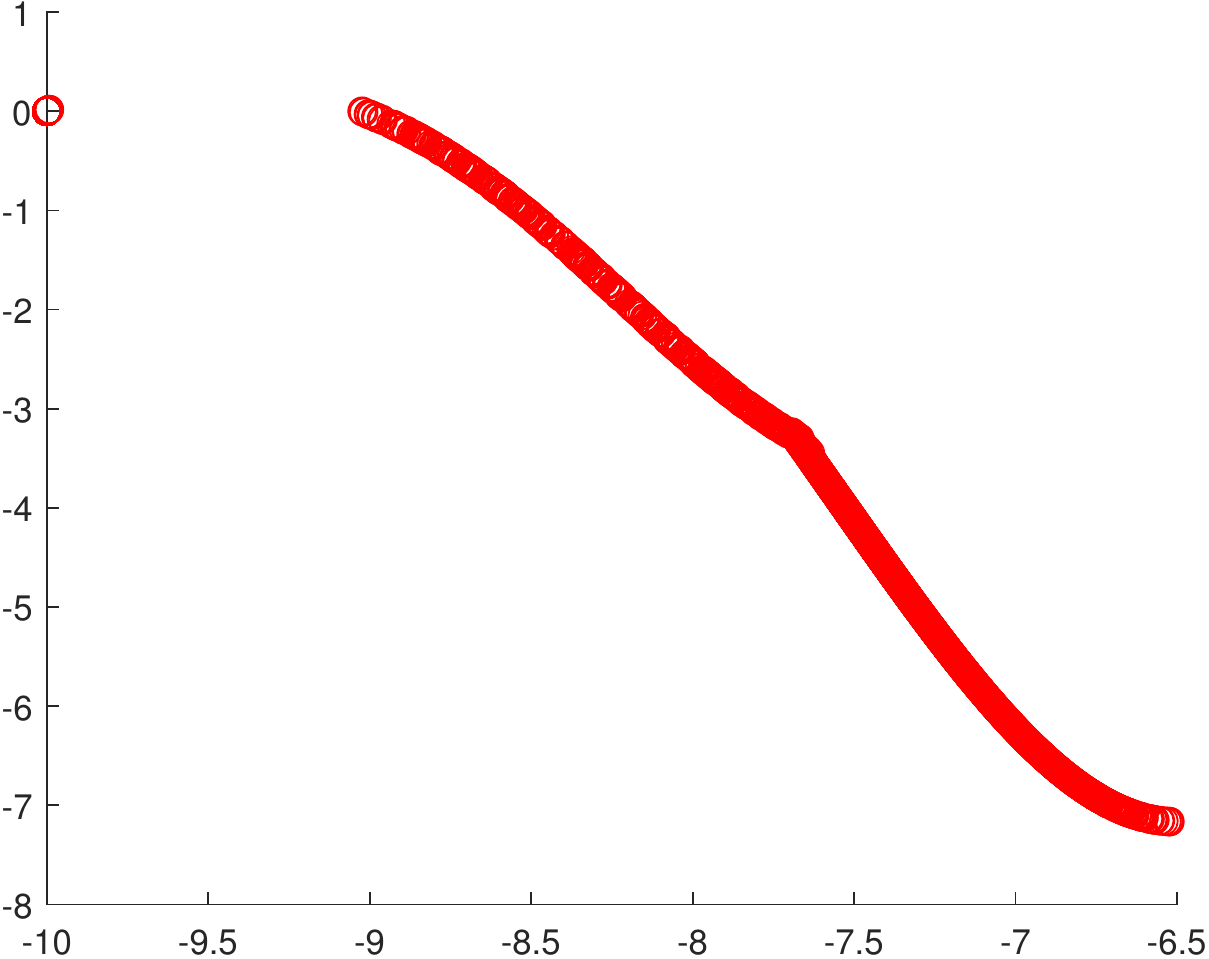}}\\
\cite[MOP 5]{veld1999} & &\\
& &\\
& &\\
$n=2$ & & \\
$r=2$ & &\\
& &\\
 & &\\
$T=0.0829$ sec & &\\
$\overline{k}=32.6$ & &\\
& &\\
\hline
 &\multirow{3}{*}{\includegraphics[scale=0.47]{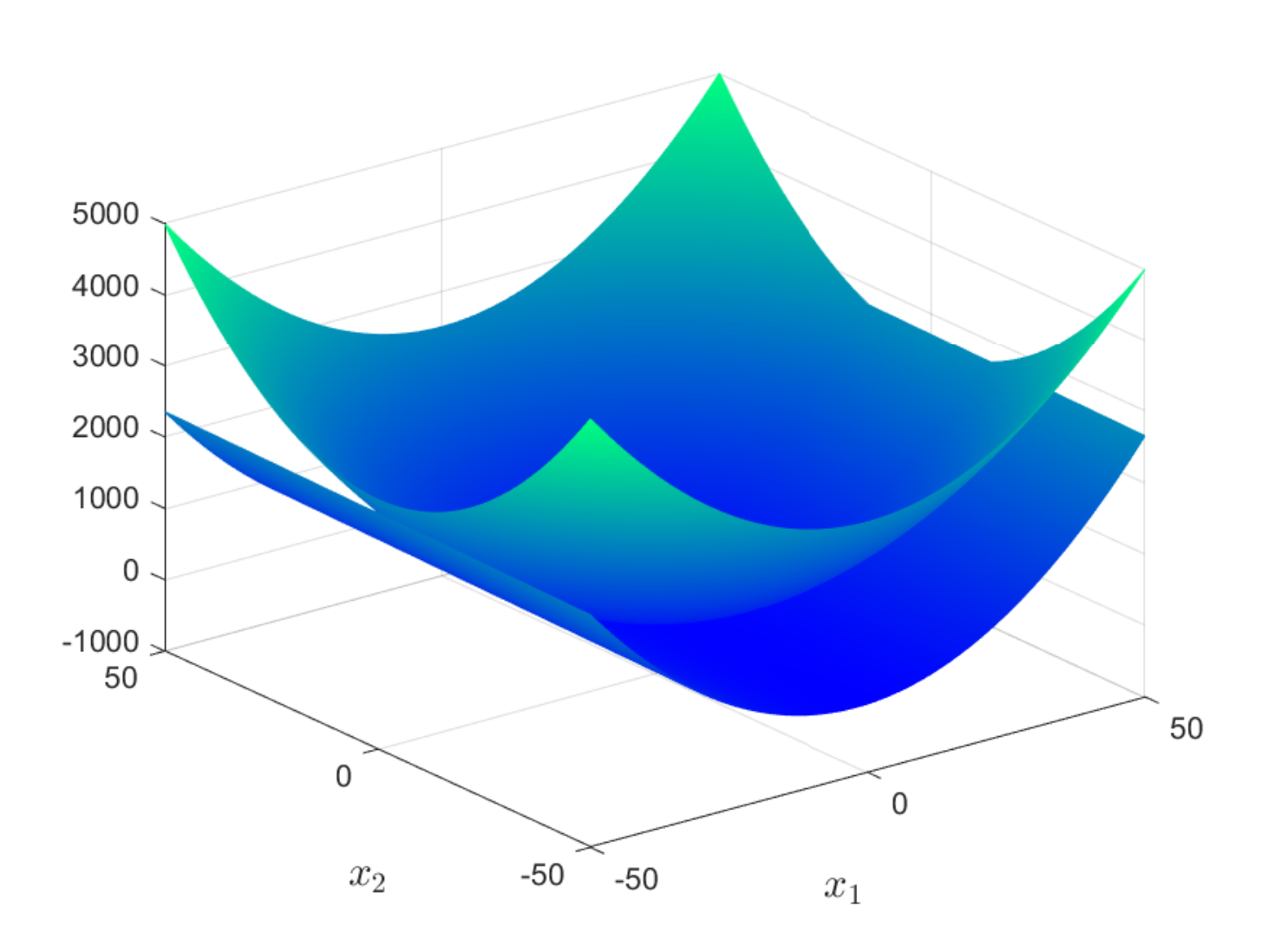}}
 &\multirow{3}{*}{\includegraphics[scale=0.47]{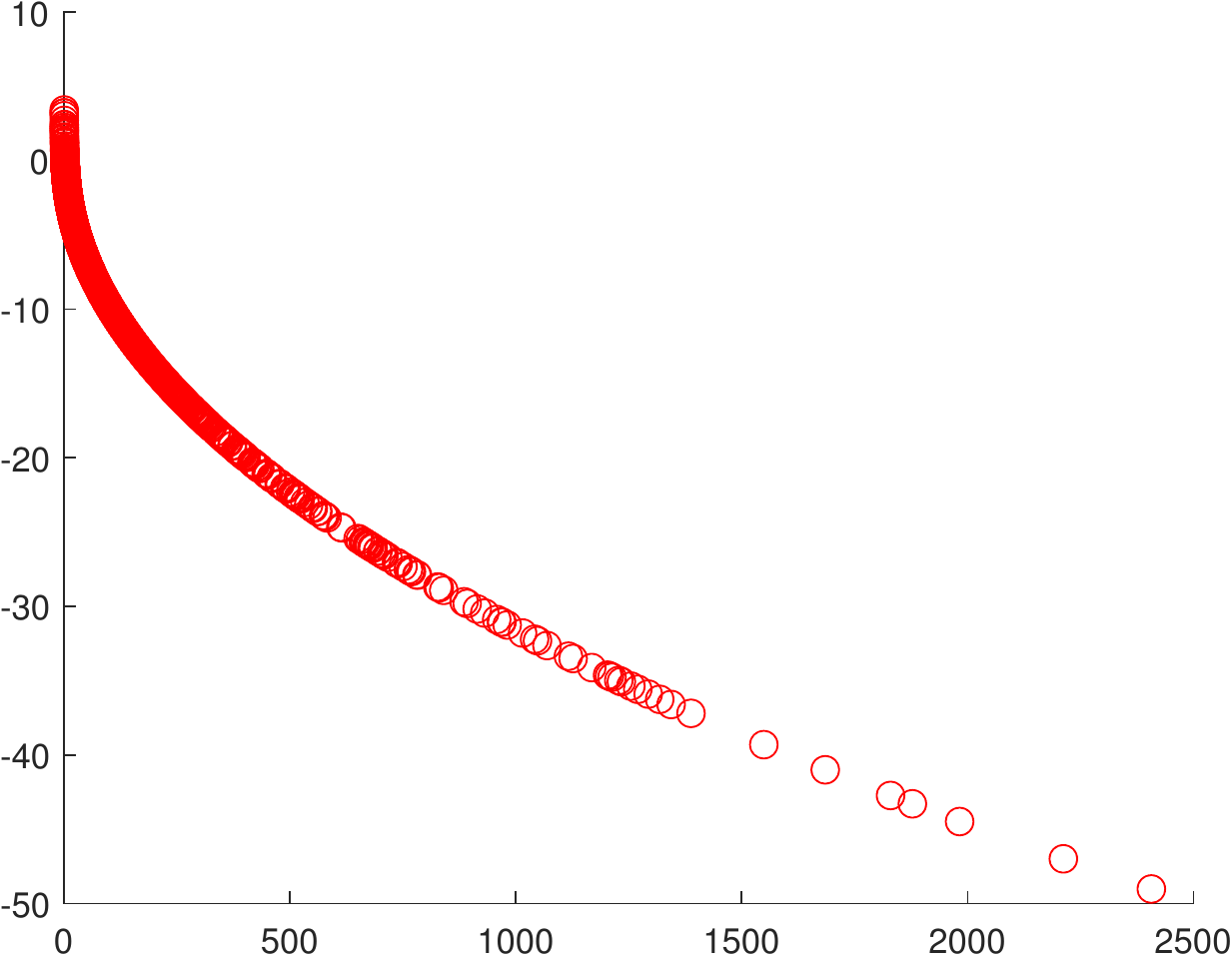}}\\
\cite[MOP 6]{veld1999}& &\\
& &\\
& &\\
$n=2$ & & \\
$r=2$ & &\\
& &\\
 & &\\
$T=0.0511$ sec & &\\
$\overline{k}=28.0$ & &\\
& &\\
\hline
 &\multirow{3}{*}{\includegraphics[scale=0.47]{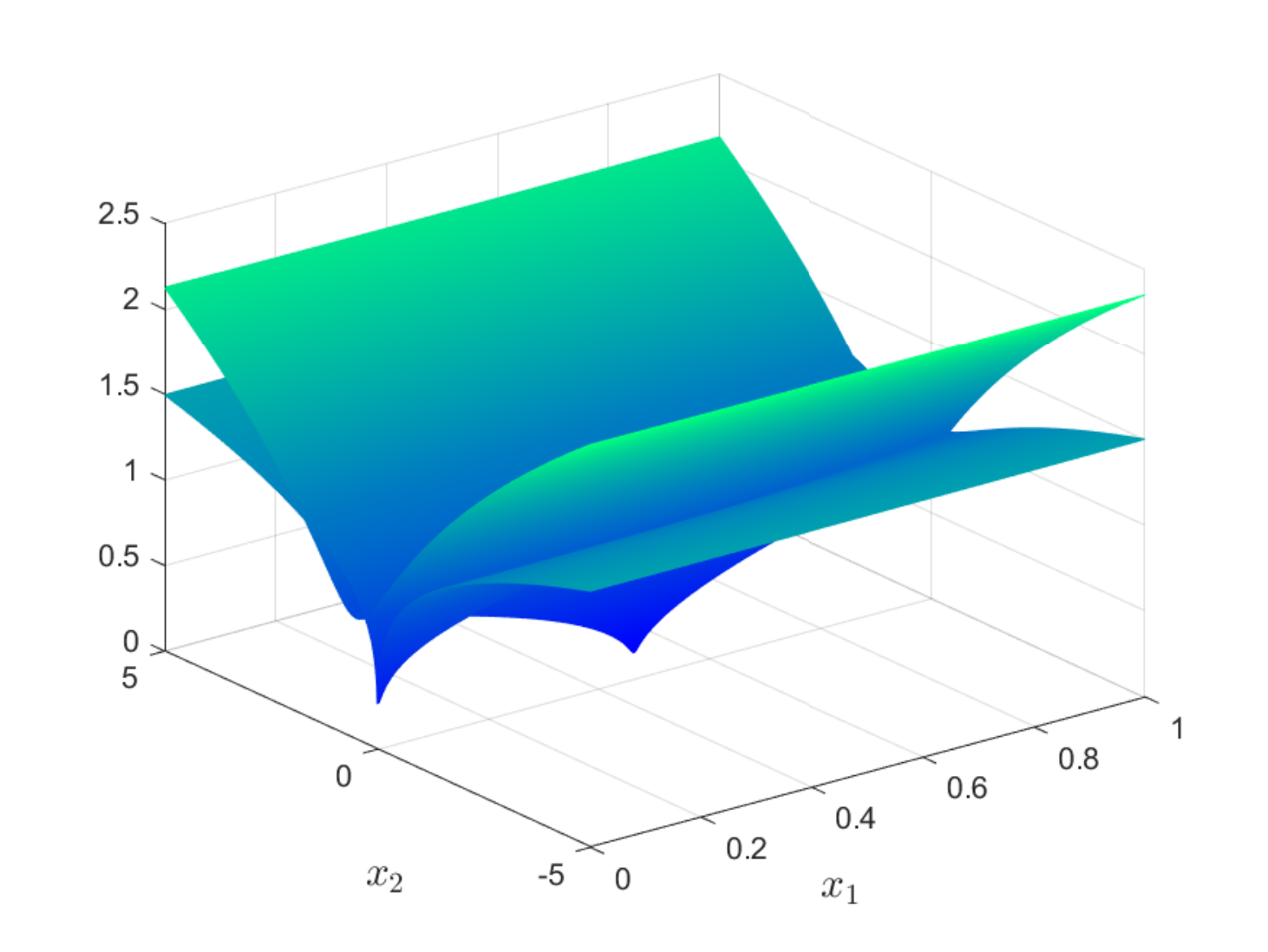}}
 &\multirow{3}{*}{\includegraphics[scale=0.47]{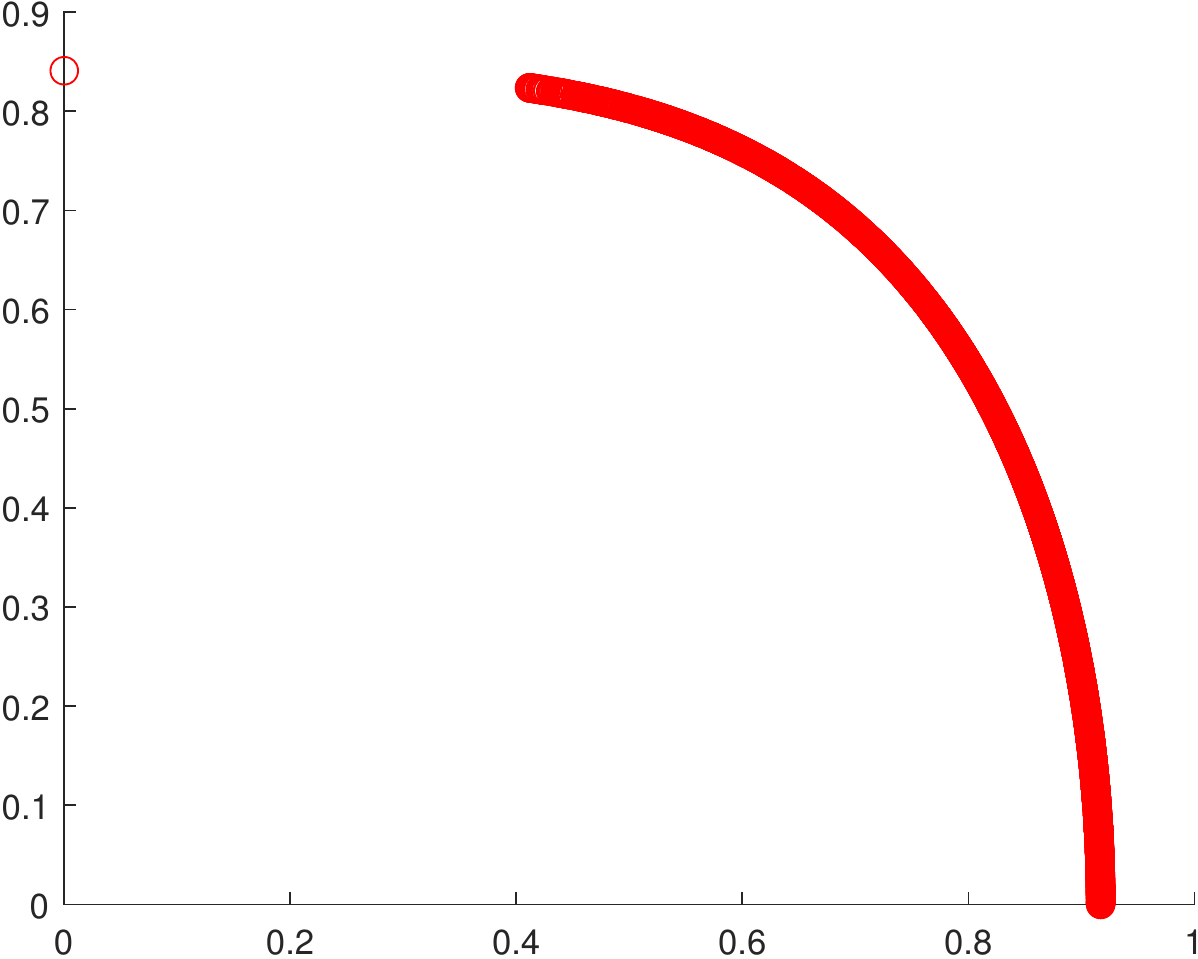}}\\
\cite[MOP 7]{veld1999} & &\\
& &\\
& &\\
$n=2$ & & \\
$r=2$ & &\\
& &\\
& &\\
$T=0.0503$ sec & &\\
$\overline{k}=21.1$ & &\\
& &\\
\hline
\end{tabular}
\caption{Results for problems with dimension $n=2$ of reference \cite{veld1999}.}
\label{table3}
\end{table}

\begin{table}[h!]
\centering
\begin{tabular}{|l|c|c|}\hline
Problem & Objective functions & Pareto front\\
\hline
 &\multirow{3}{*}{\includegraphics[scale=0.47]{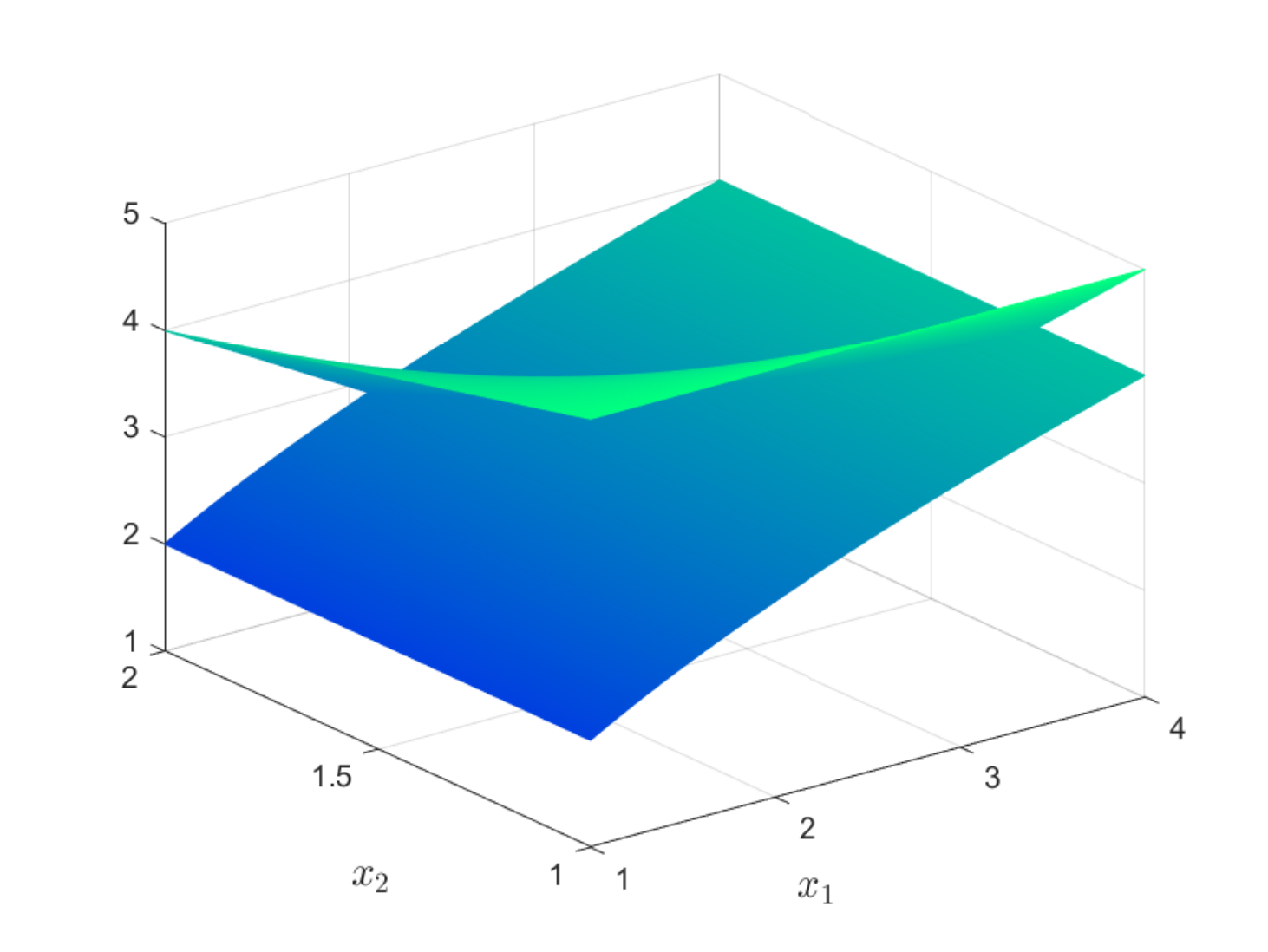}}
 &\multirow{3}{*}{\includegraphics[scale=0.47]{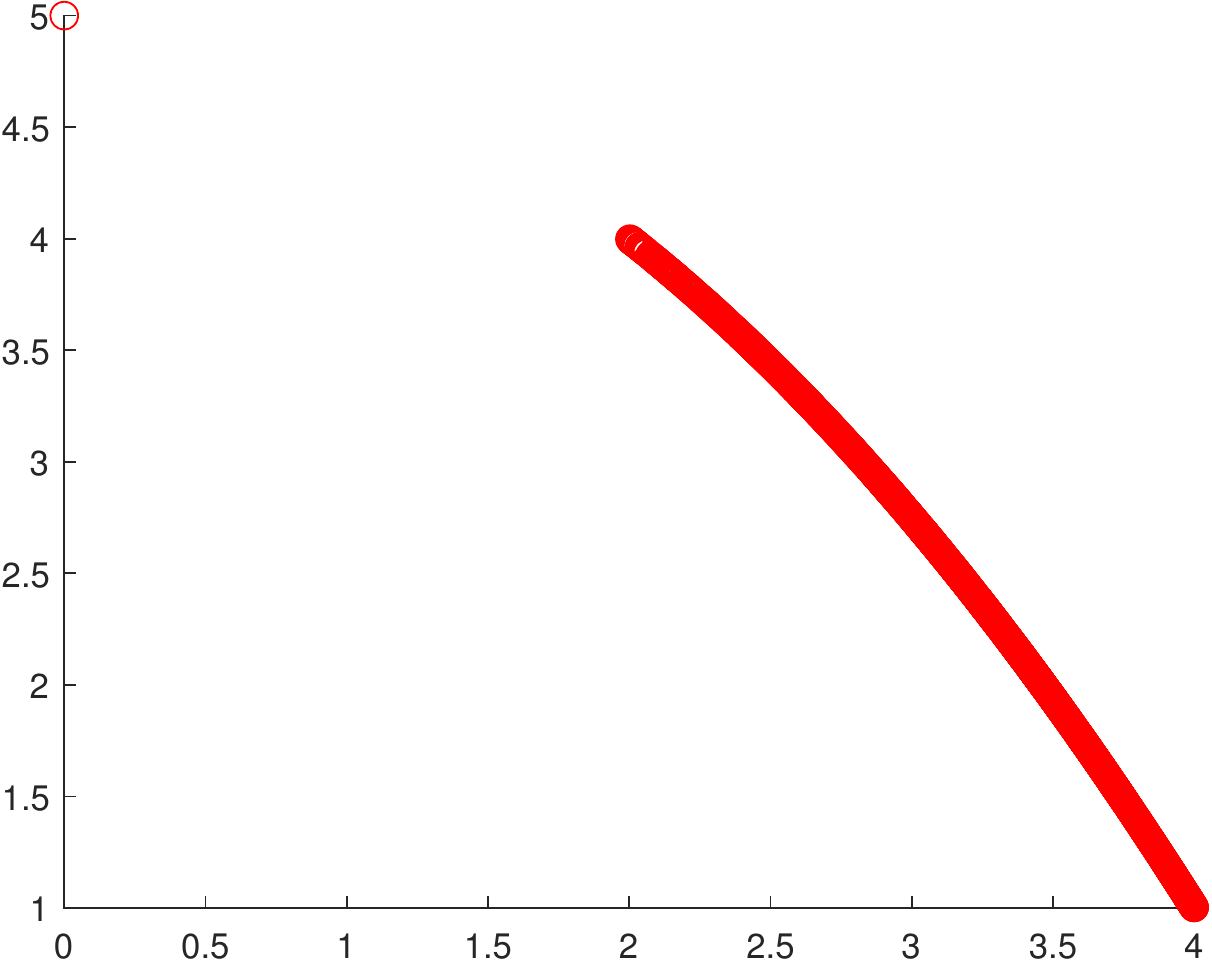}}\\
\cite[MOP 8]{veld1999} & &\\
& &\\
& &\\
$n=2$ & & \\
$r=2$ & &\\
 & &\\
& &\\
$T=0.0196$ sec & &\\
$\overline{k}=8.7$ & &\\
& &\\
\hline
Problem &\multirow{3}{*}{\includegraphics[scale=0.47]{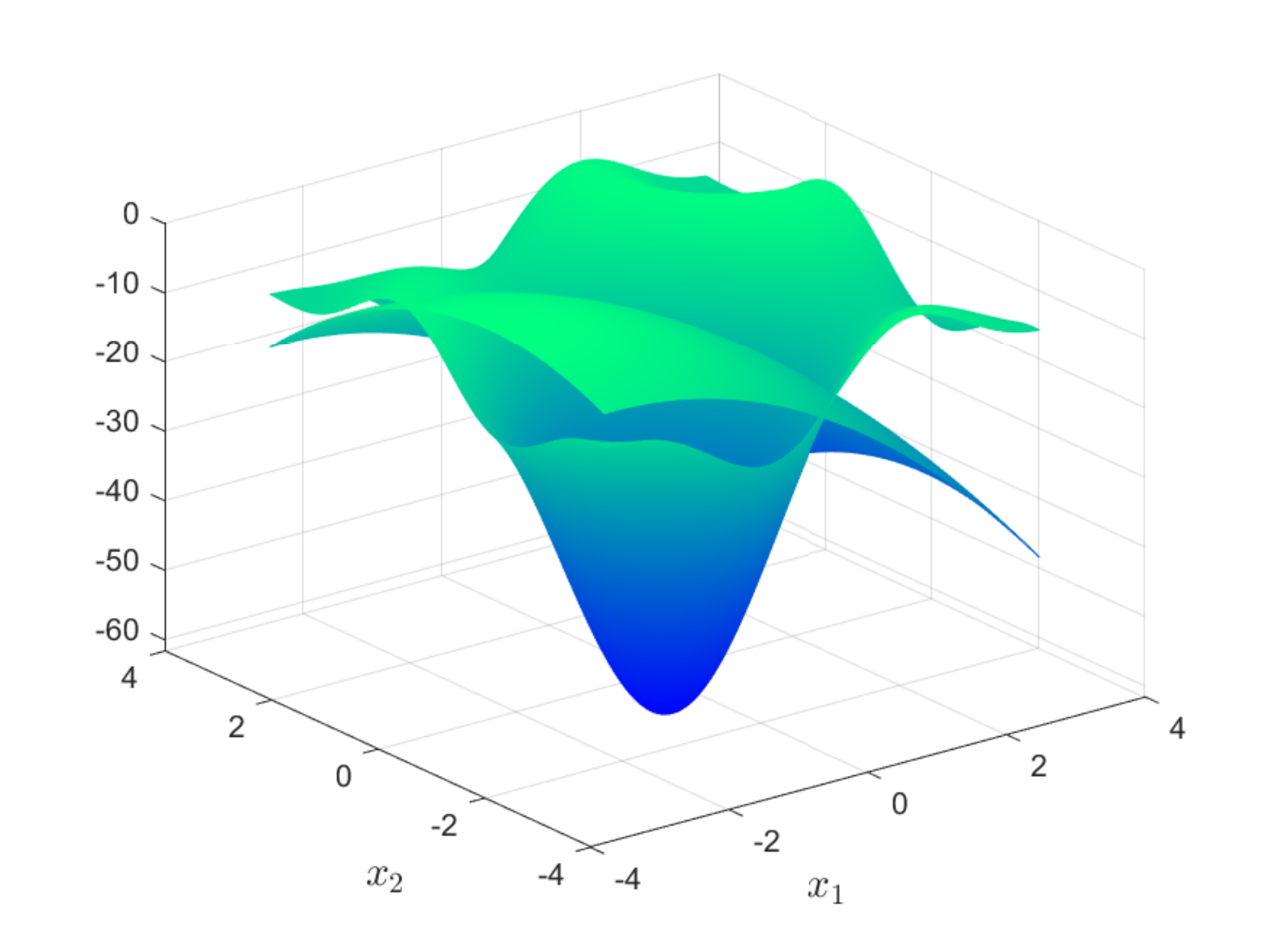}}
 &\multirow{3}{*}{\includegraphics[scale=0.47]{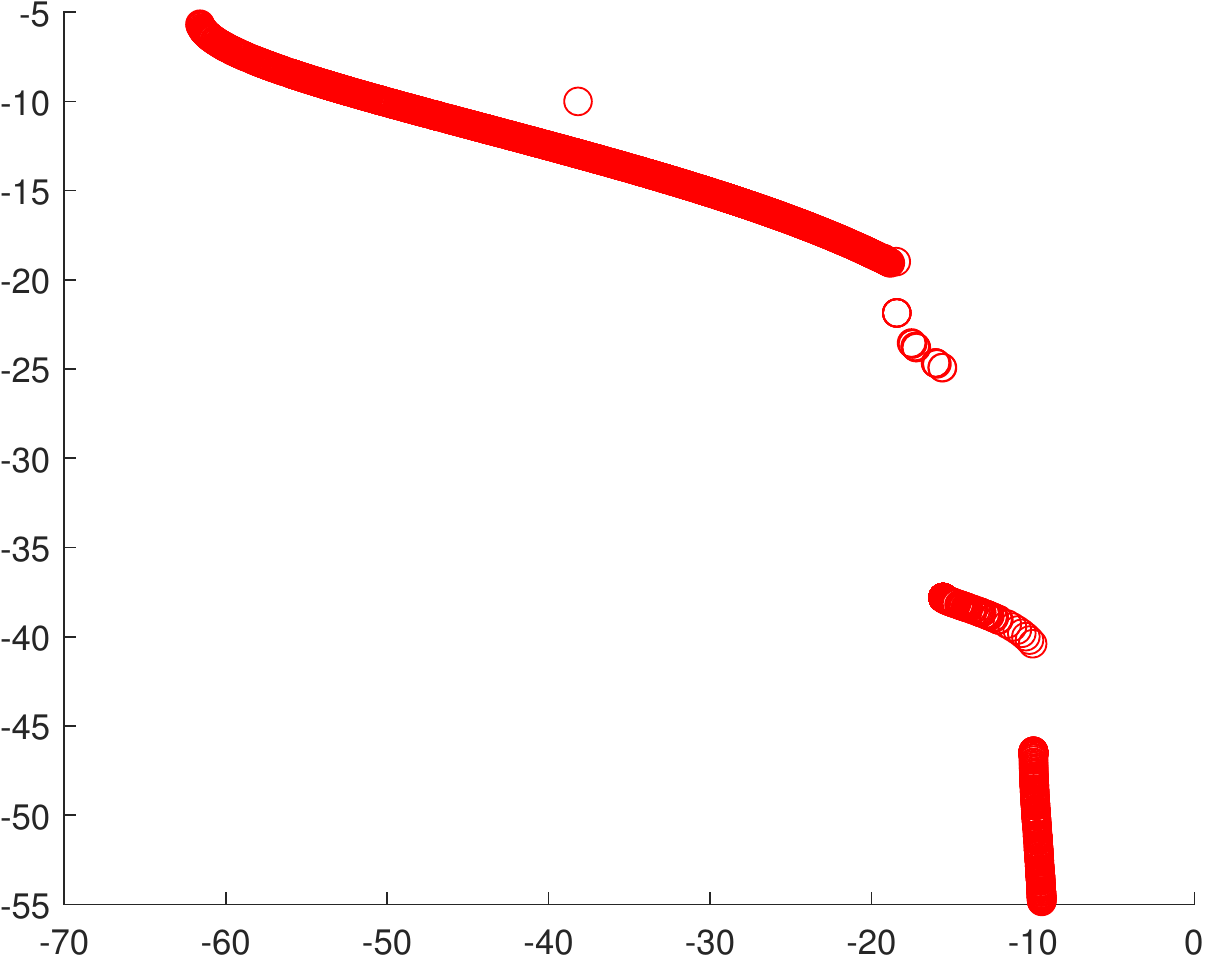}}\\
\cite[MOP 9]{veld1999} & &\\
& &\\
& &\\
$n=2$ & & \\
$r=2$ & &\\
& &\\
& &\\
$T=0.0339$ sec & &\\
$\overline{k}=14.3$ & &\\
& &\\
\hline
Problem &\multirow{3}{*}{\includegraphics[scale=0.47]{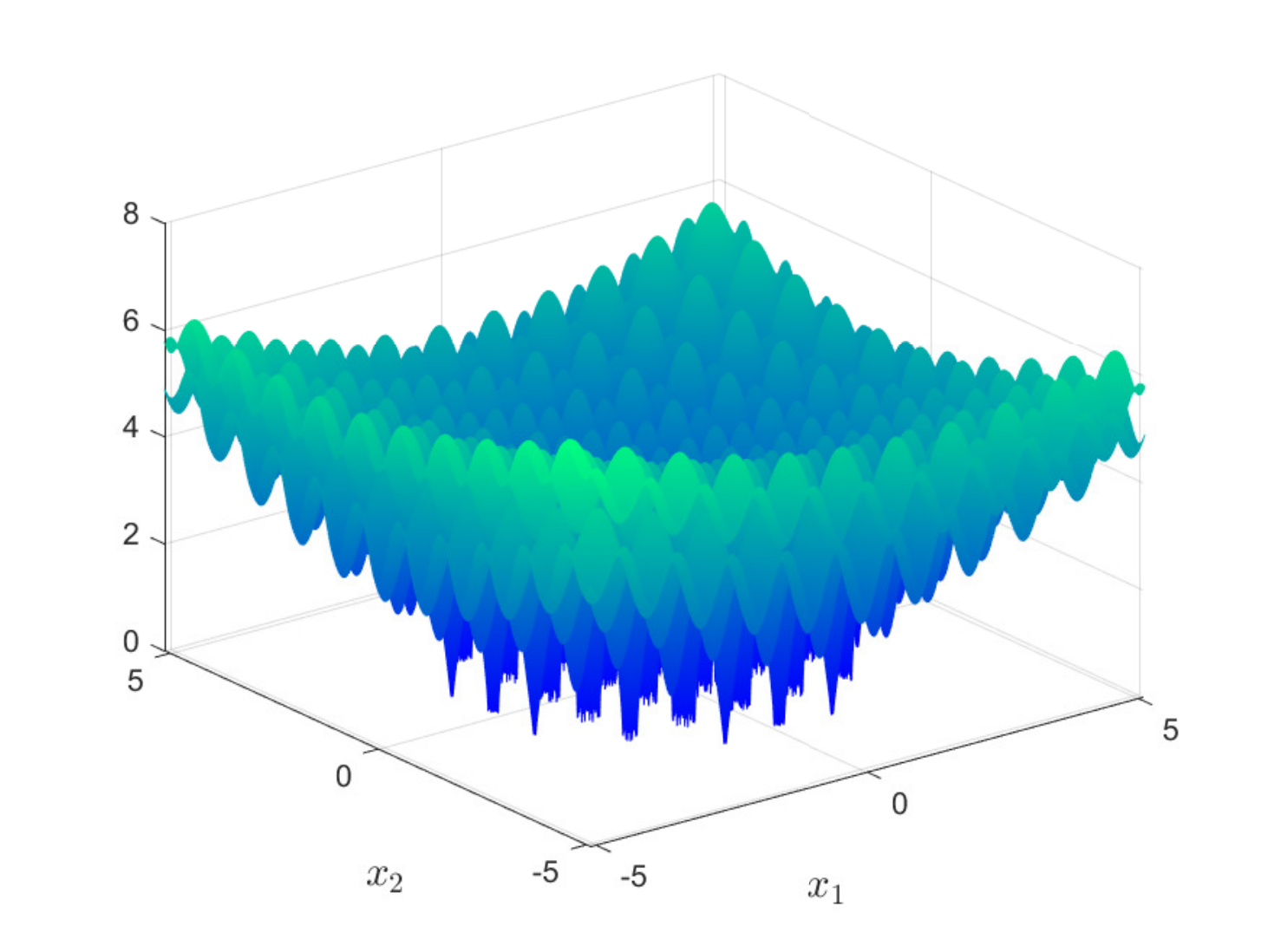}}
 &\multirow{3}{*}{\includegraphics[scale=0.47]{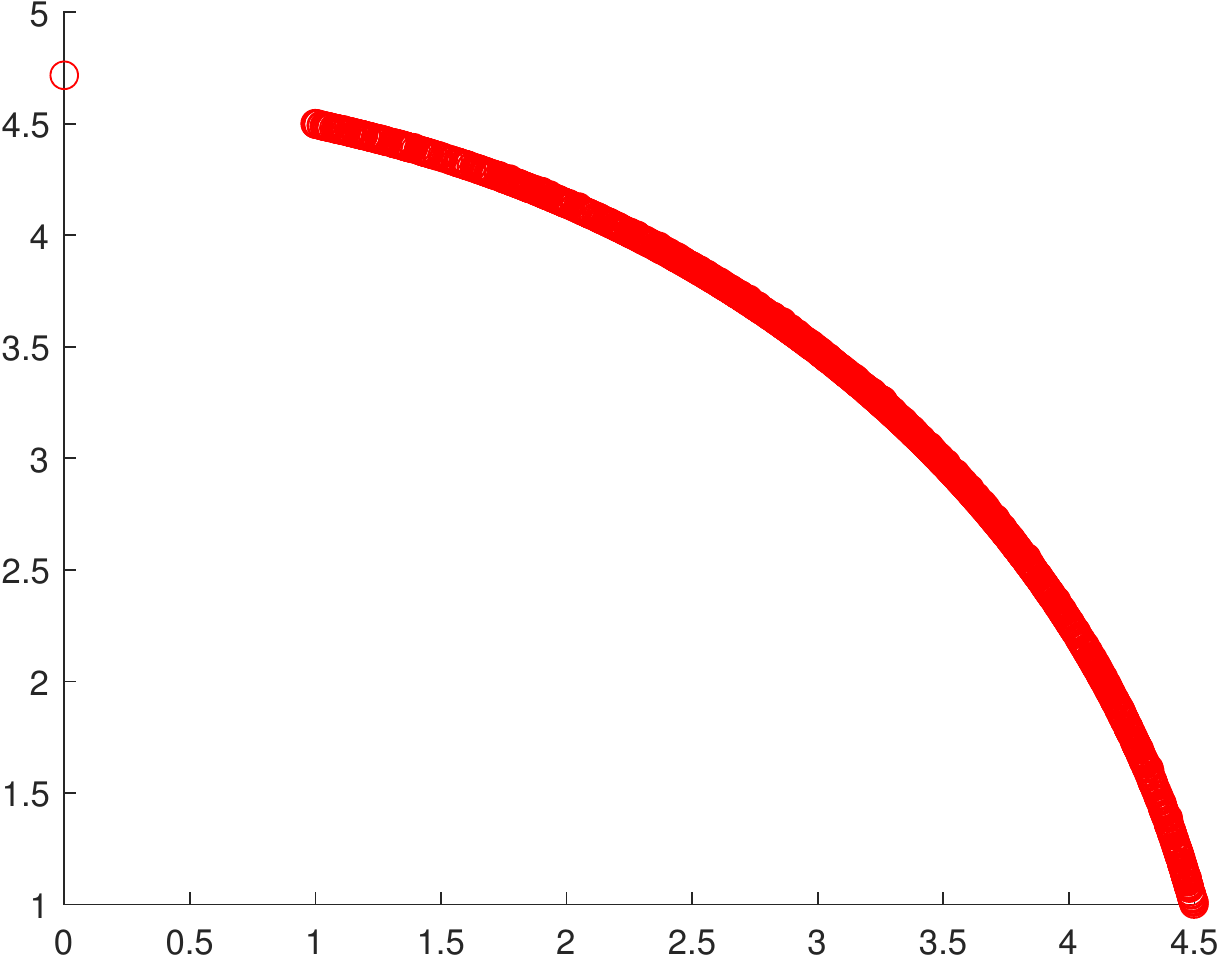}}\\
\cite[MOP 10]{veld1999} & &\\
& &\\
& &\\
$n=2$ & & \\
$r=2$ & &\\
& &\\
& &\\
$T=0.0323$ sec & &\\
$\overline{k}=13.2$ & &\\
& &\\
\hline
 &\multirow{3}{*}{\includegraphics[scale=0.47]{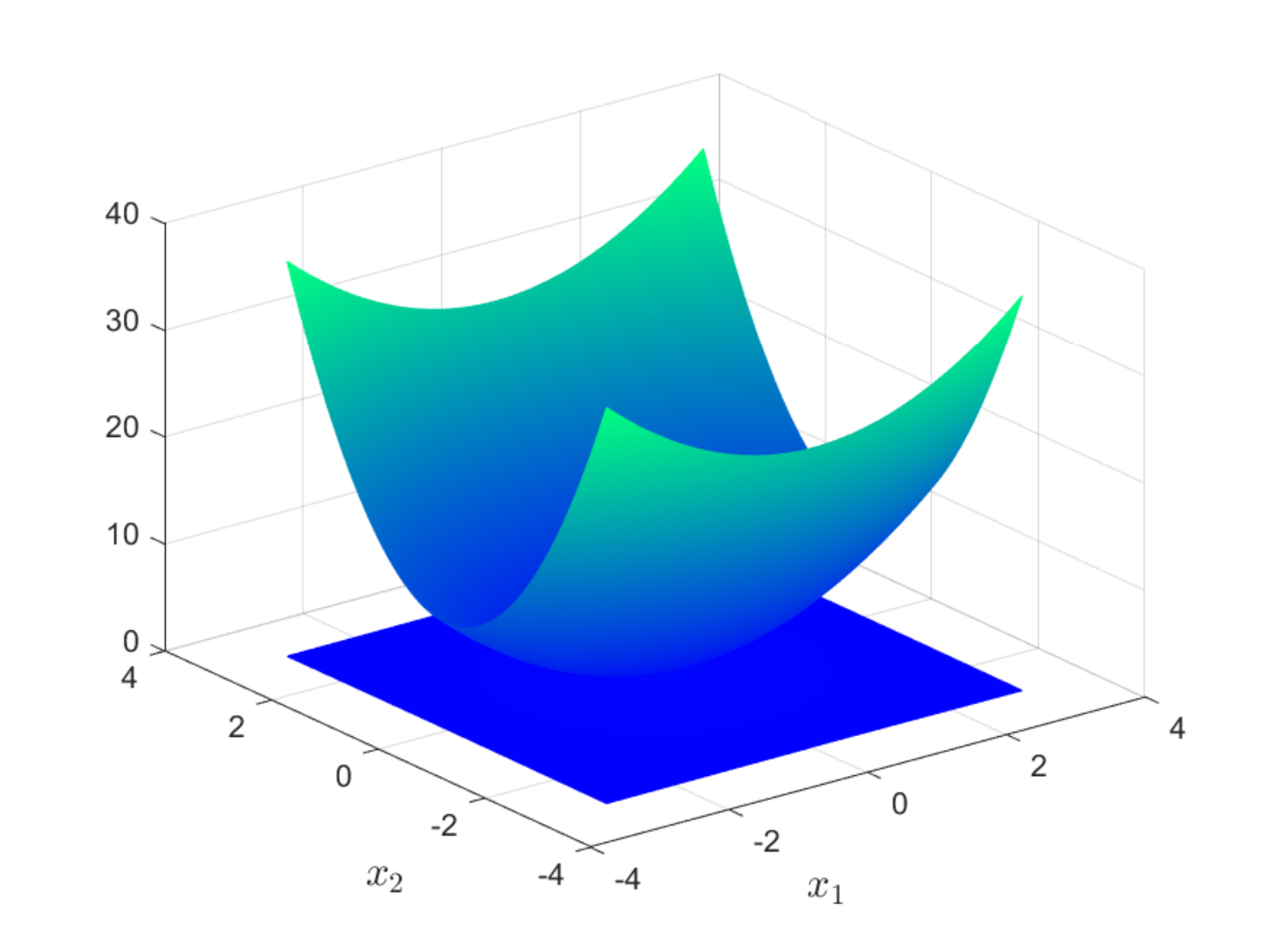}}
 &\multirow{3}{*}{\includegraphics[scale=0.47]{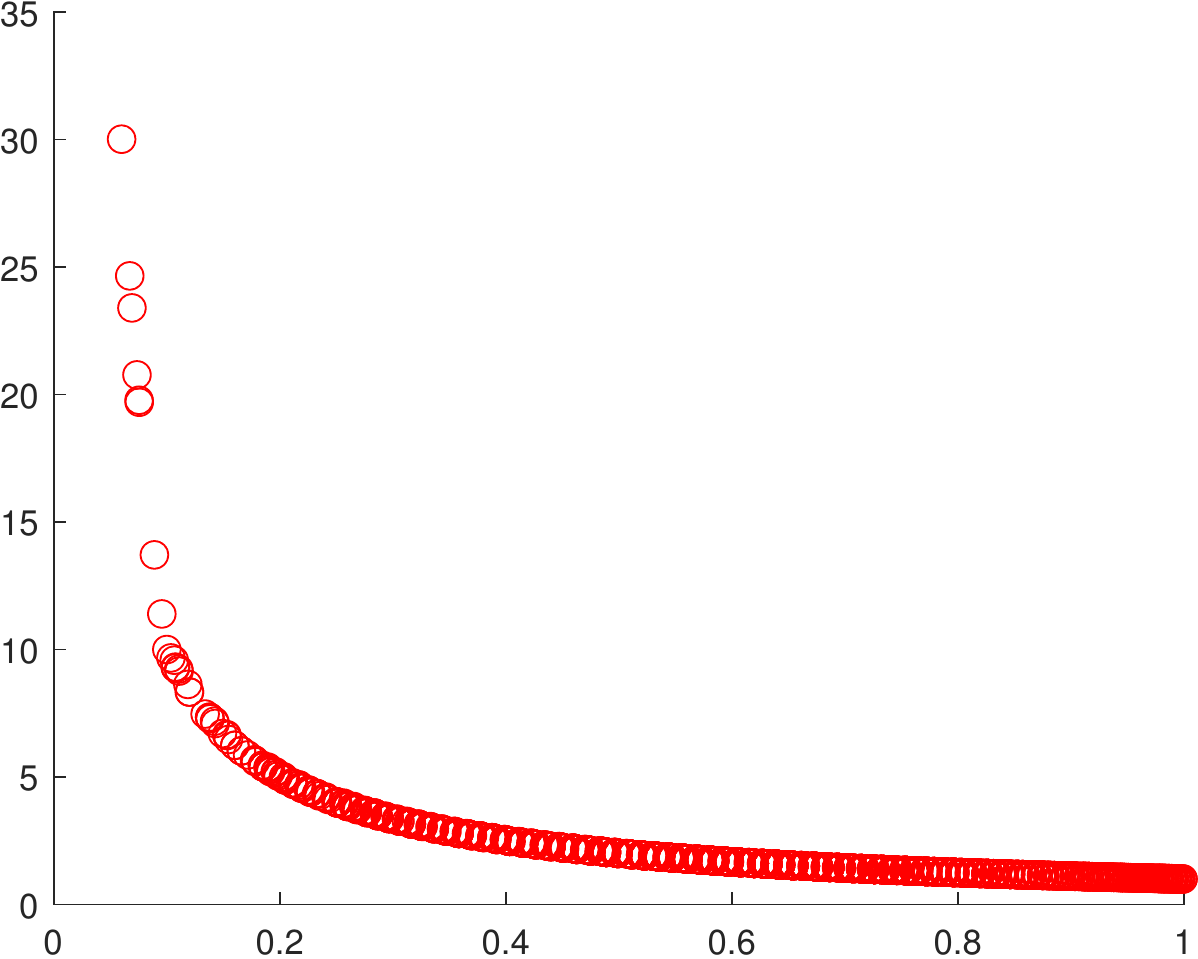}}\\
\cite[MOP 11]{veld1999}& &\\
& &\\
& &\\
$n=2$ & & \\
$r=2$ & &\\
& &\\
& &\\
$T=0.0242$ sec & &\\
$\overline{k}=12.1$ & &\\
& &\\
\hline
\end{tabular}
\caption{Results for problems with dimension $n=2$ of reference \cite{veld1999}.}
\label{table4}
\end{table}

\begin{table}[h!]
\centering
\begin{tabular}{|l|c|c|}\hline
 Problem & Objective functions & Pareto front\\
\hline
 &\multirow{3}{*}{\includegraphics[scale=0.47]{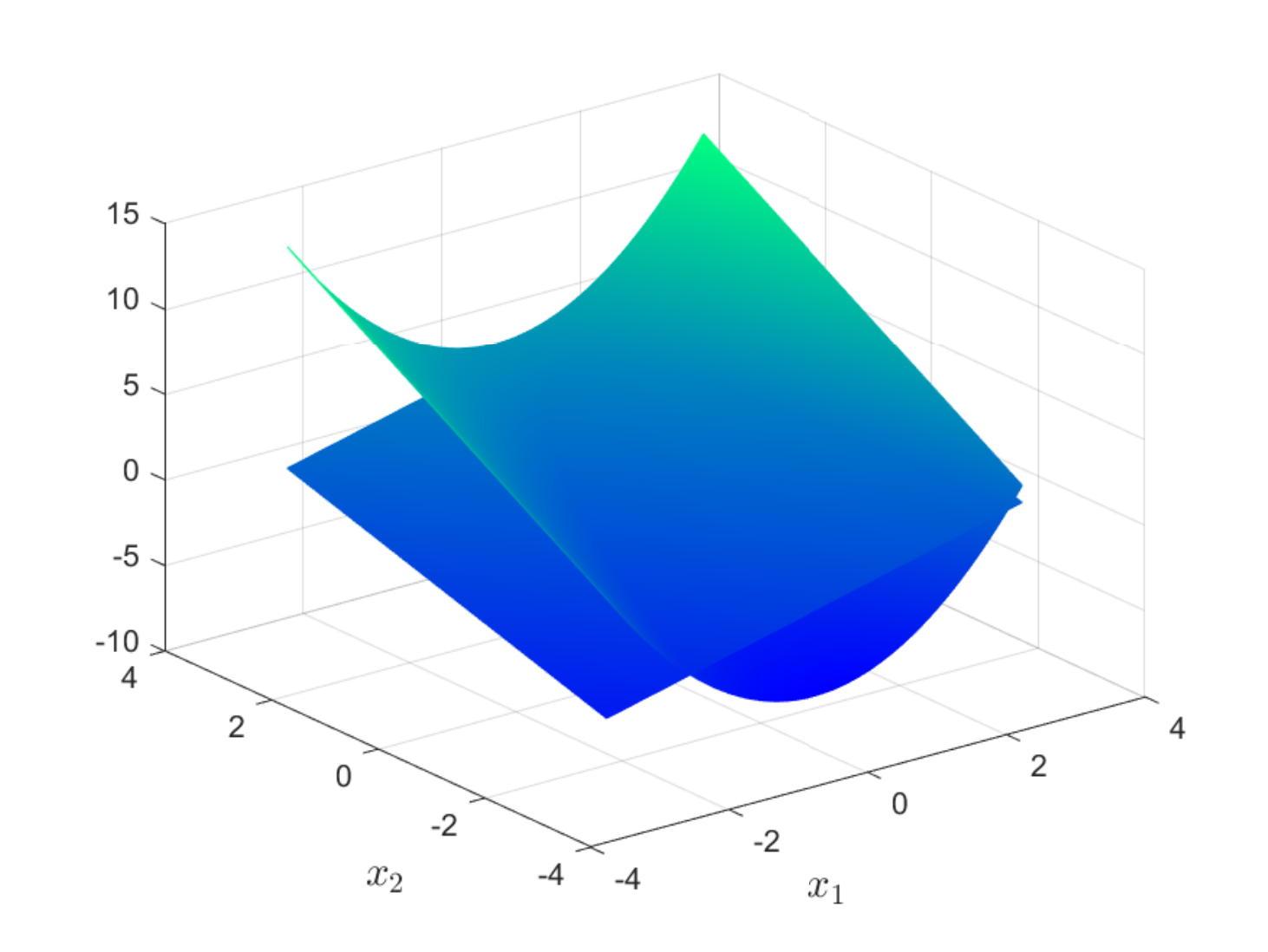}}
 &\multirow{3}{*}{\includegraphics[scale=0.47]{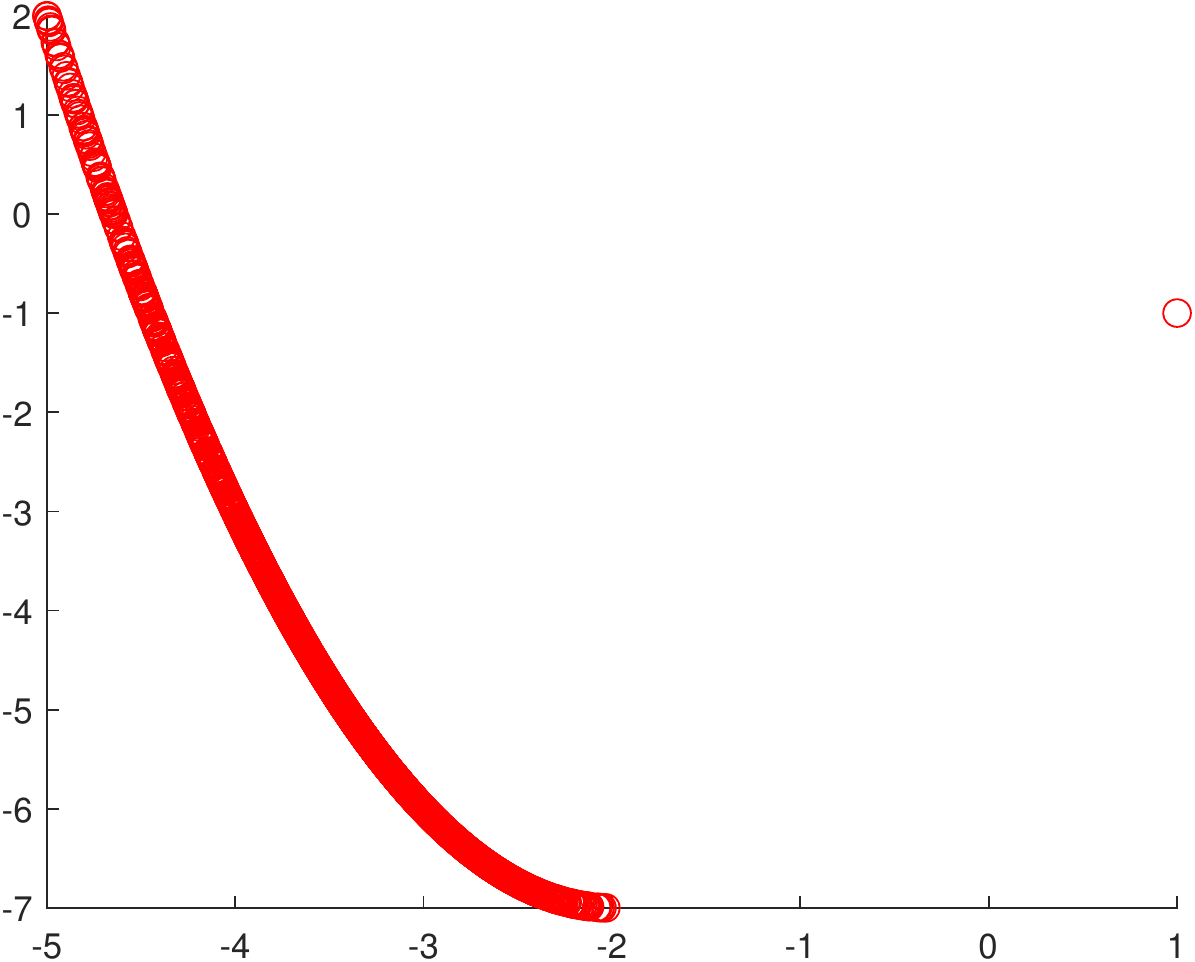}}\\
\cite[MOP 12]{veld1999} & &\\
& &\\
& &\\
$n=2$ & & \\
$r=2$ & &\\
 & &\\
& &\\
$T=0.0232$ sec & &\\
$\overline{k}=12.1$ & &\\
& &\\
\hline
 &\multirow{3}{*}{\includegraphics[scale=0.47]{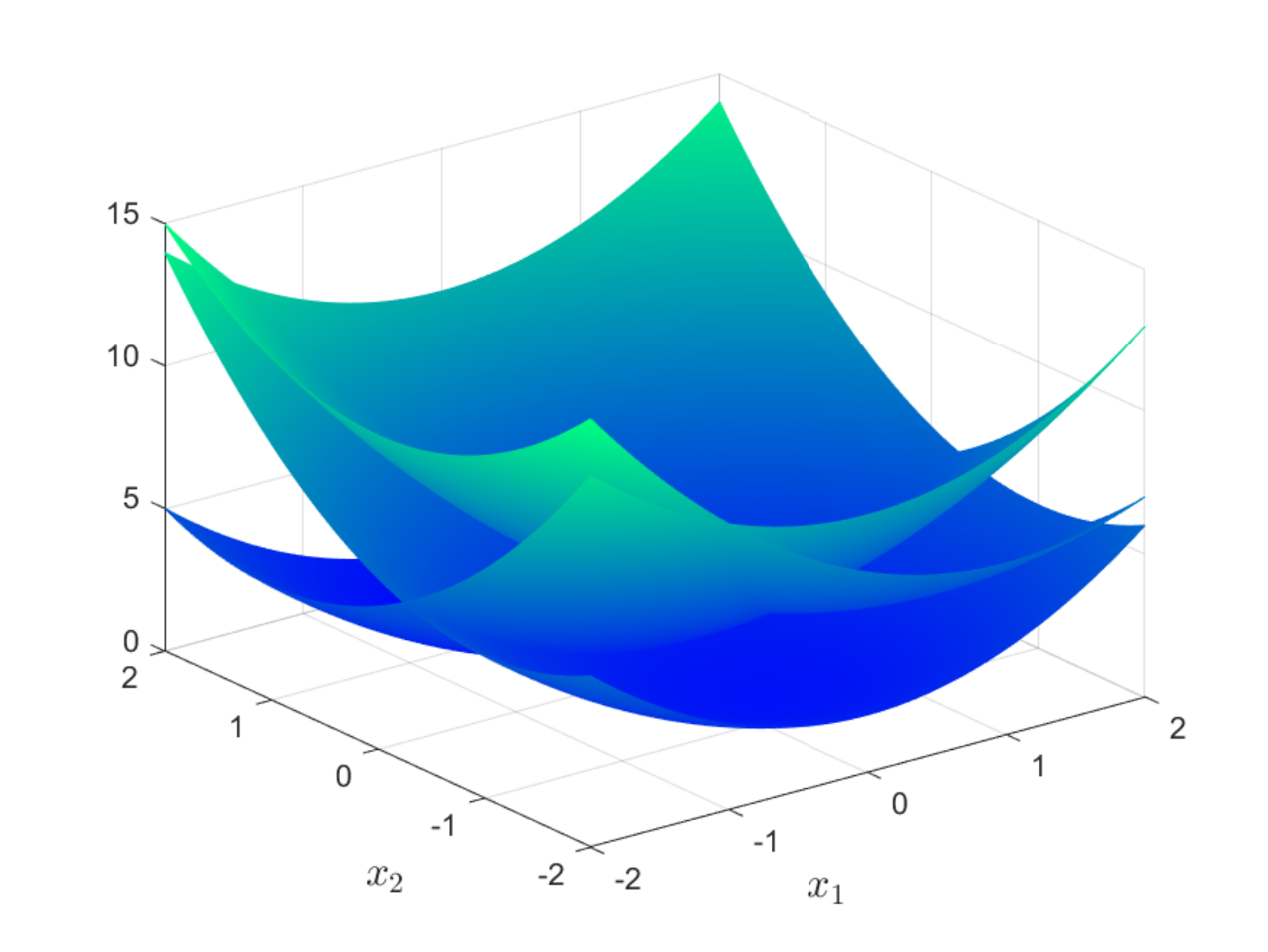}}
 &\multirow{3}{*}{\includegraphics[scale=0.47]{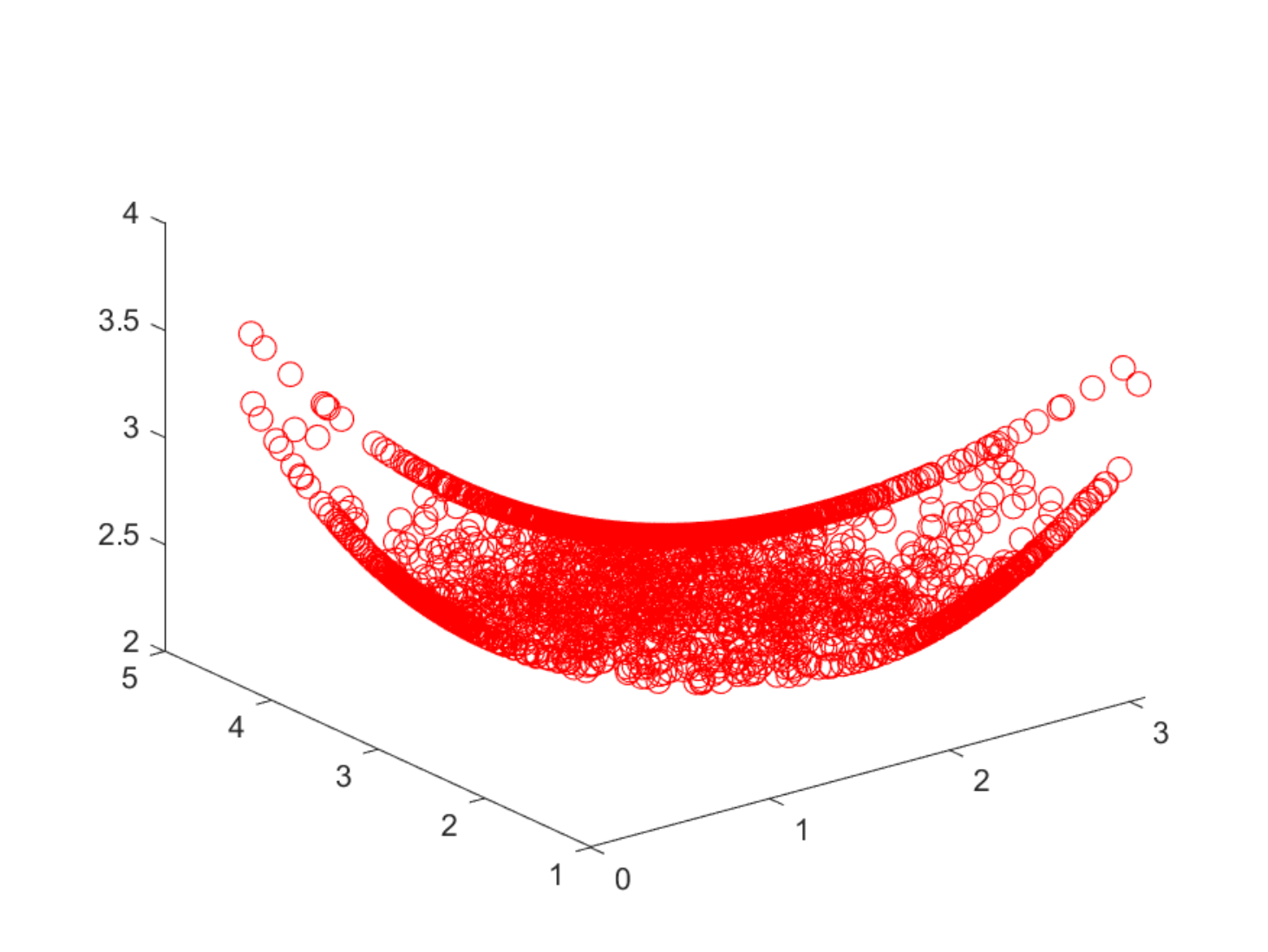}}\\
\cite[MOP 16]{veld1999}& &\\
& &\\
& &\\
$n=2$ & & \\
$r=3$ & &\\
 & &\\
& &\\
$T=0.0581$ sec & &\\
$\overline{k}=29.7$ & &\\
& &\\
\hline
 &\multirow{3}{*}{\includegraphics[scale=0.47]{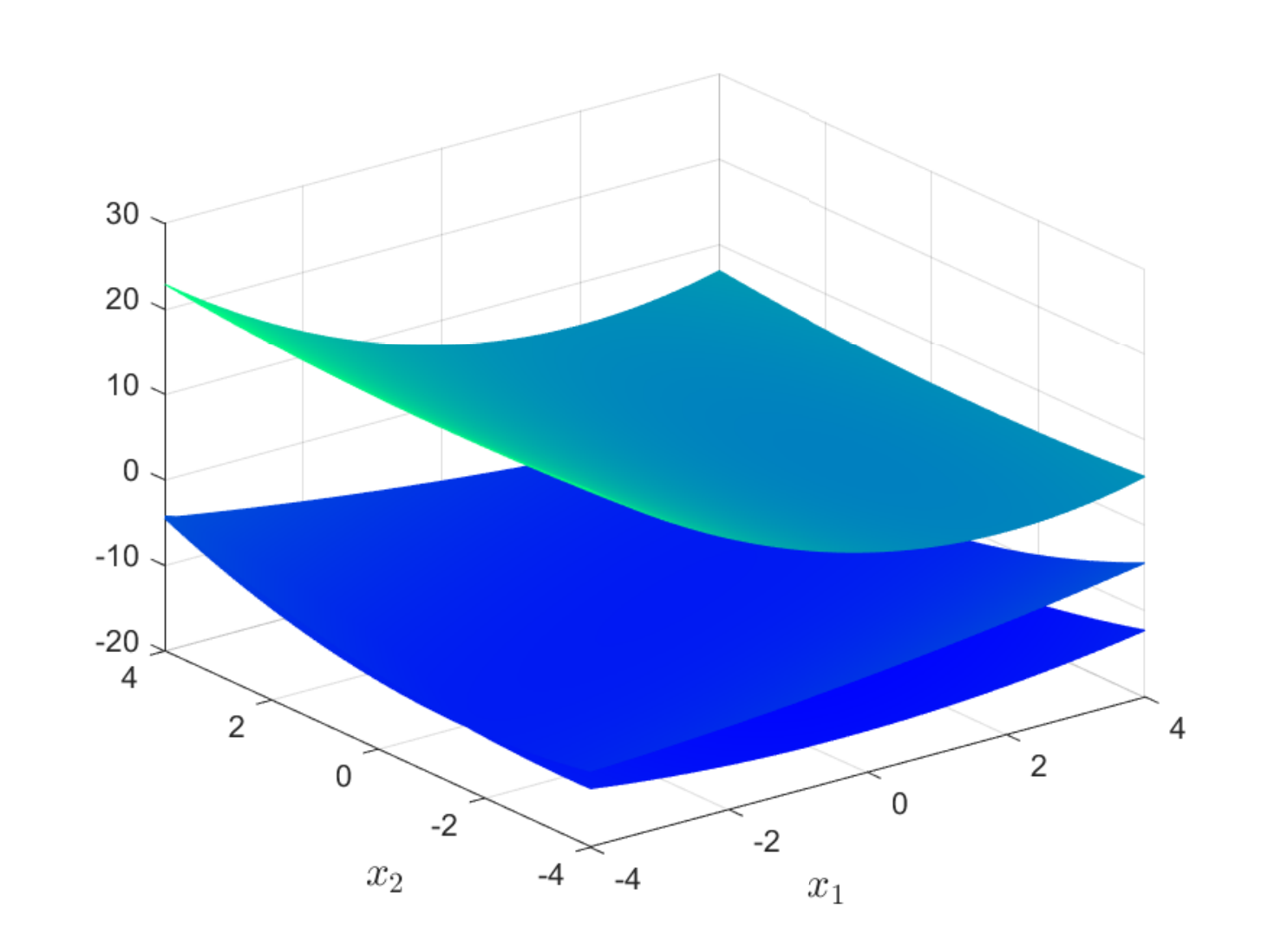}}
 &\multirow{3}{*}{\includegraphics[scale=0.47]{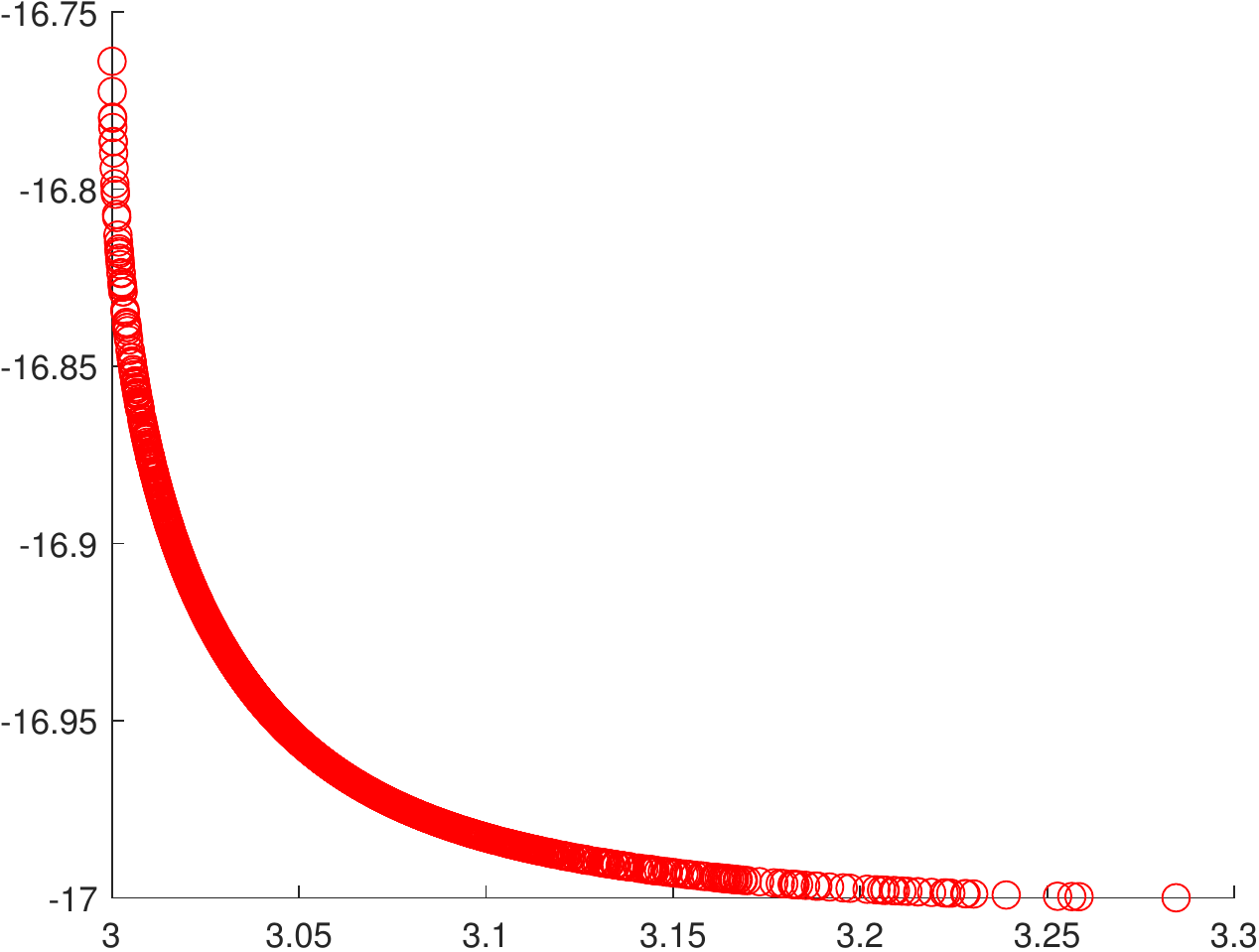}}\\
\cite[MOP 17]{veld1999}& &\\
& &\\
& &\\
$n=2$ & & \\
$r=3$ & &\\
 & &\\
& &\\
$T=0.0538$ sec & &\\
$\overline{k}=26.3$ & &\\
& &\\
\hline
&\multirow{3}{*}{\includegraphics[scale=0.47]{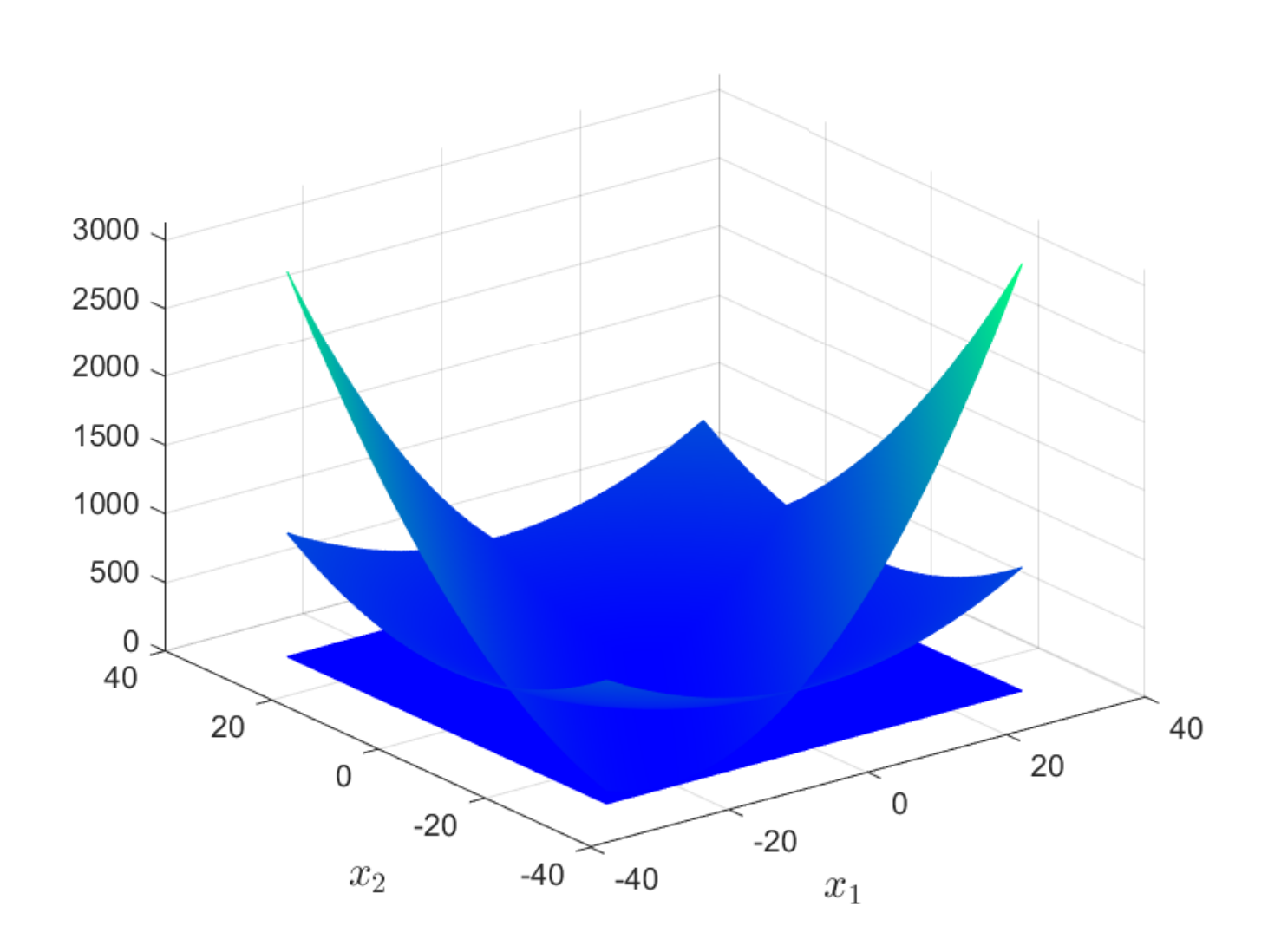}}
 &\multirow{3}{*}{\includegraphics[scale=0.47]{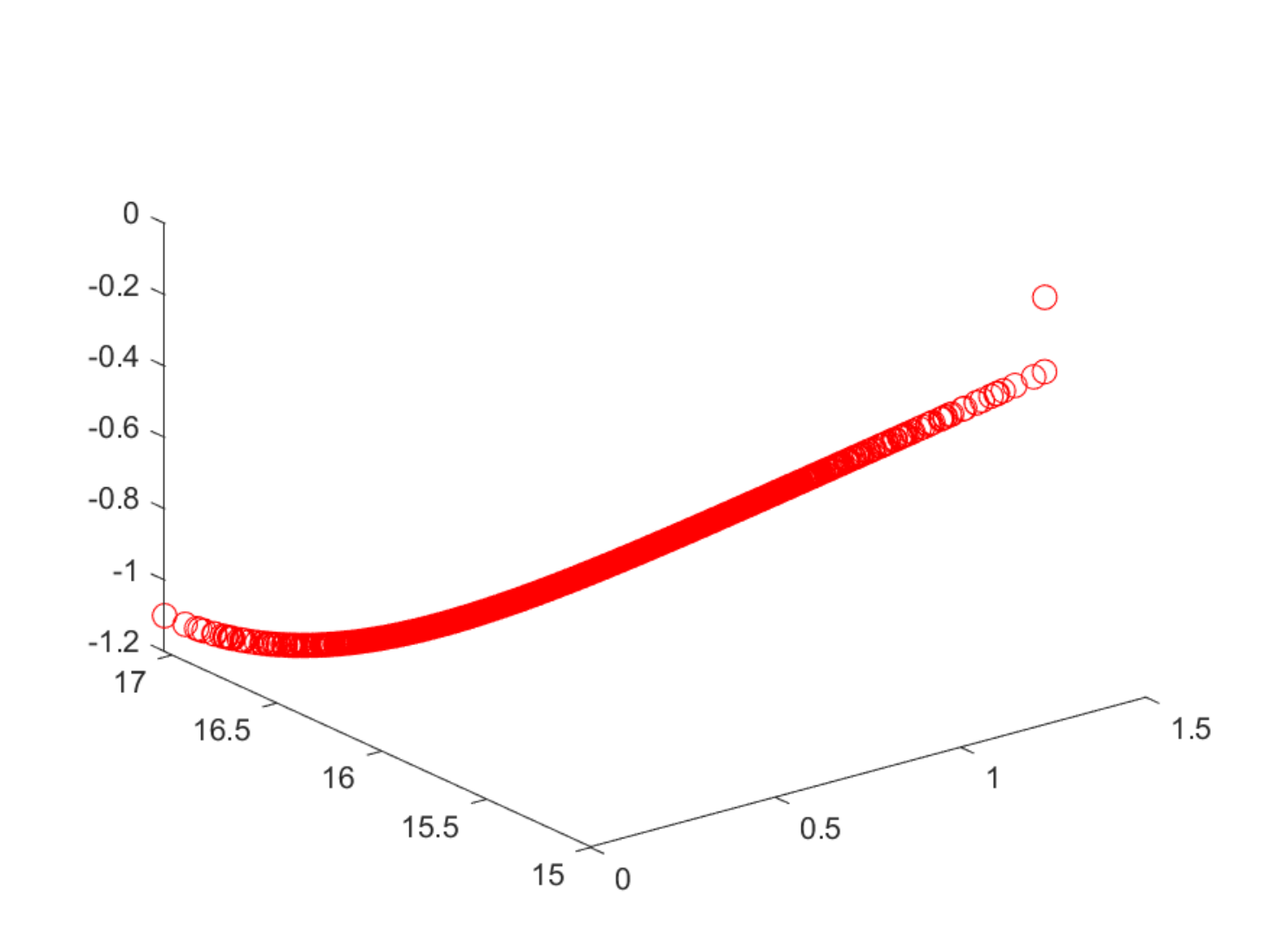}}\\
\cite[MOP 18]{veld1999} & &\\
& &\\
& &\\
$n=2$ & & \\
$r=3$ & &\\
 & &\\
& &\\
$T=0.0606$ sec & &\\
$\overline{k}=30.0$ & &\\
& &\\
\hline
\end{tabular}
\caption{Results for problems with dimension $n=2$ of reference \cite{veld1999}.}
\label{table5}
\end{table}

\begin{table}[h!]
\centering
\begin{tabular}{|l|c|}\hline
 Problem & Pareto front\\
\hline
&\multirow{2}{*}{\includegraphics[scale=0.47]{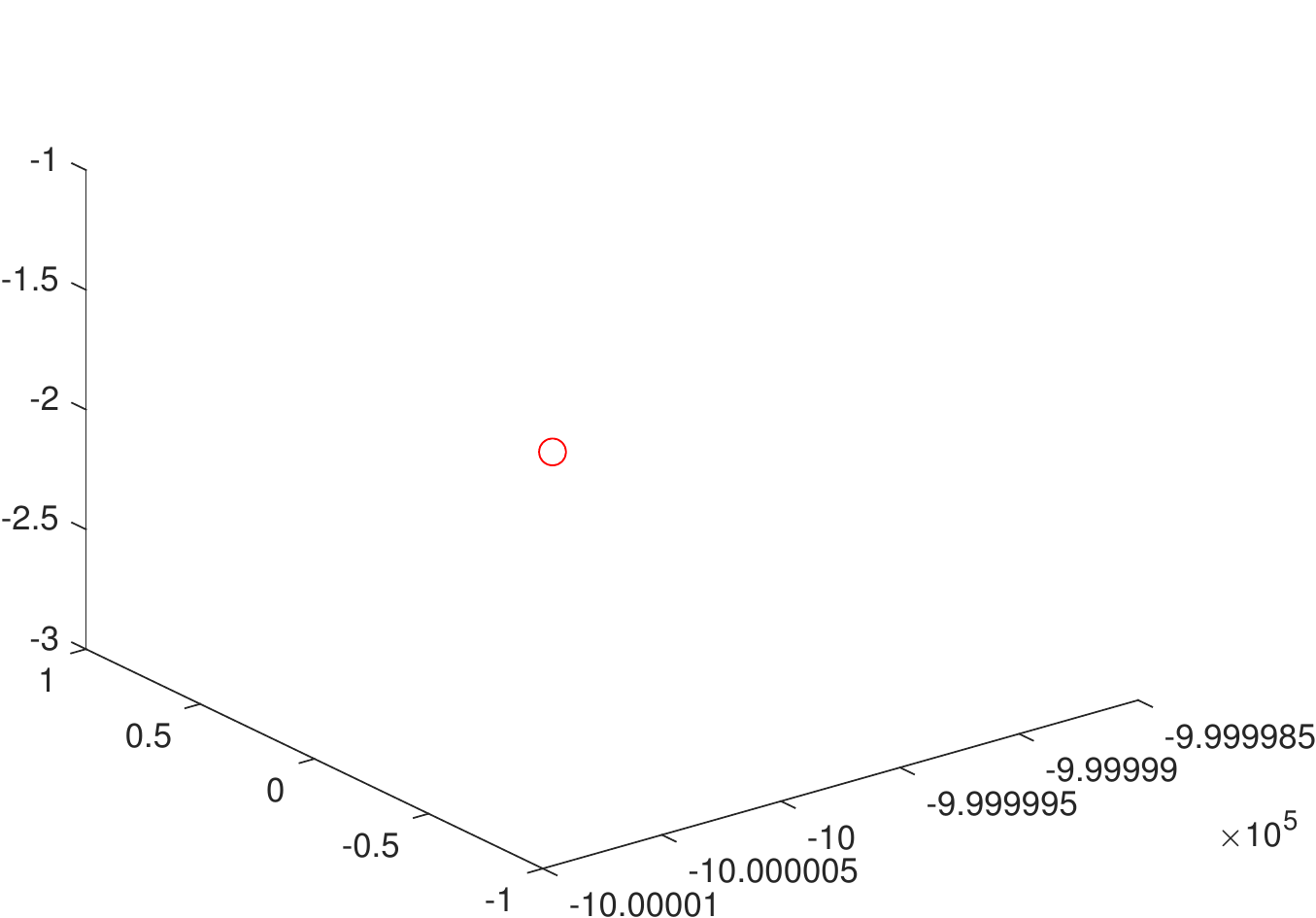}}\\
\cite[MOP 2]{veld1999} &\\
&\\
&\\
$n=2$ & \\
$r=3$ &\\
&\\
&\\
$T=0.0032$ sec &\\
$\overline{k}=1$ 
&\\
& \\
\hline
&\multirow{2}{*}{\includegraphics[scale=0.47]{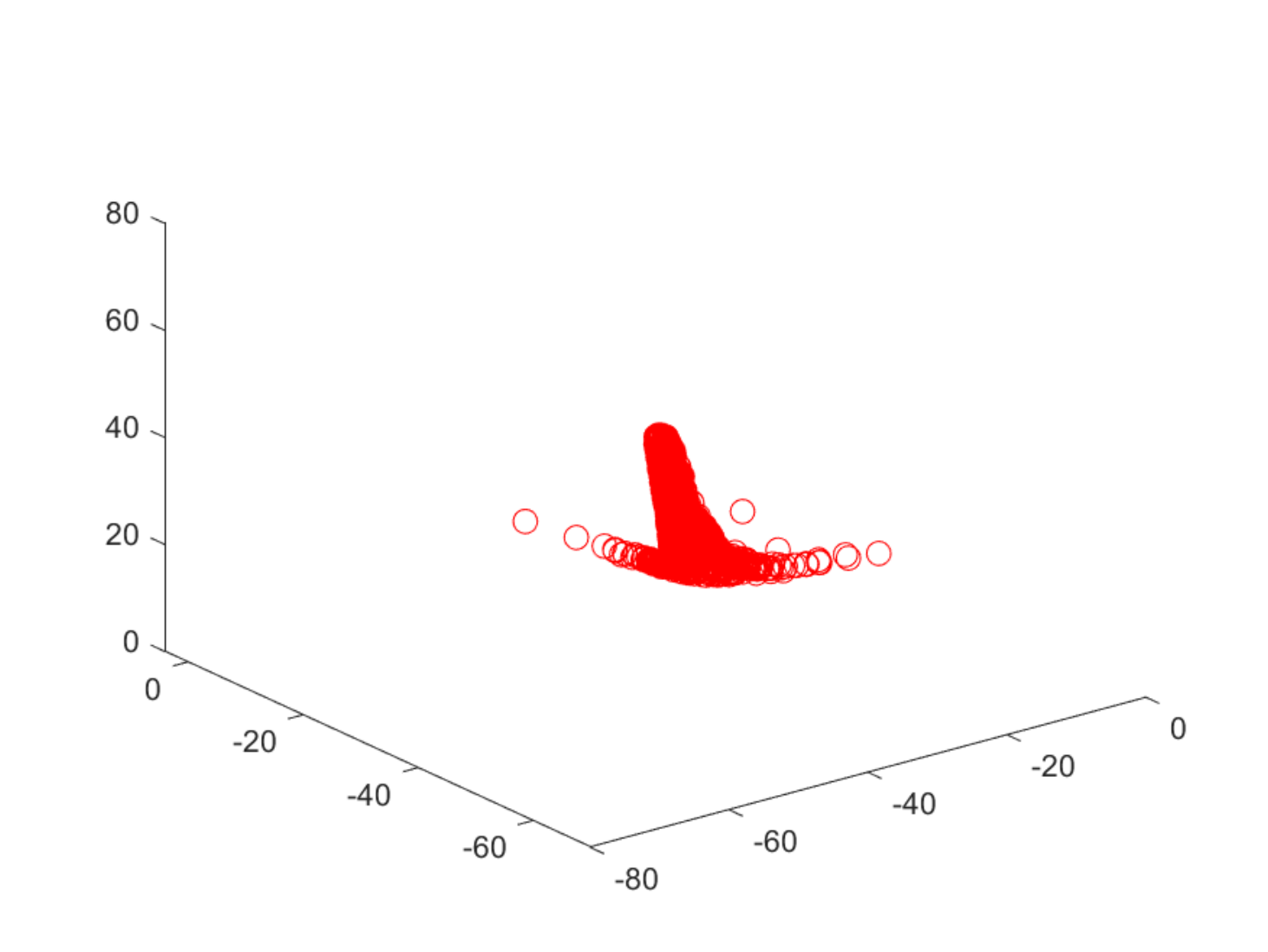}}\\
\cite[(6.28)]{deb2005} &\\
&\\
&\\
$n=3$ & \\
$r=3$ &\\
&\\
&\\
$T=0.0200$ sec &\\
$\overline{k}=6.3$ 
&\\
& \\
\hline
 &\multirow{2}{*}{\includegraphics[scale=0.47]{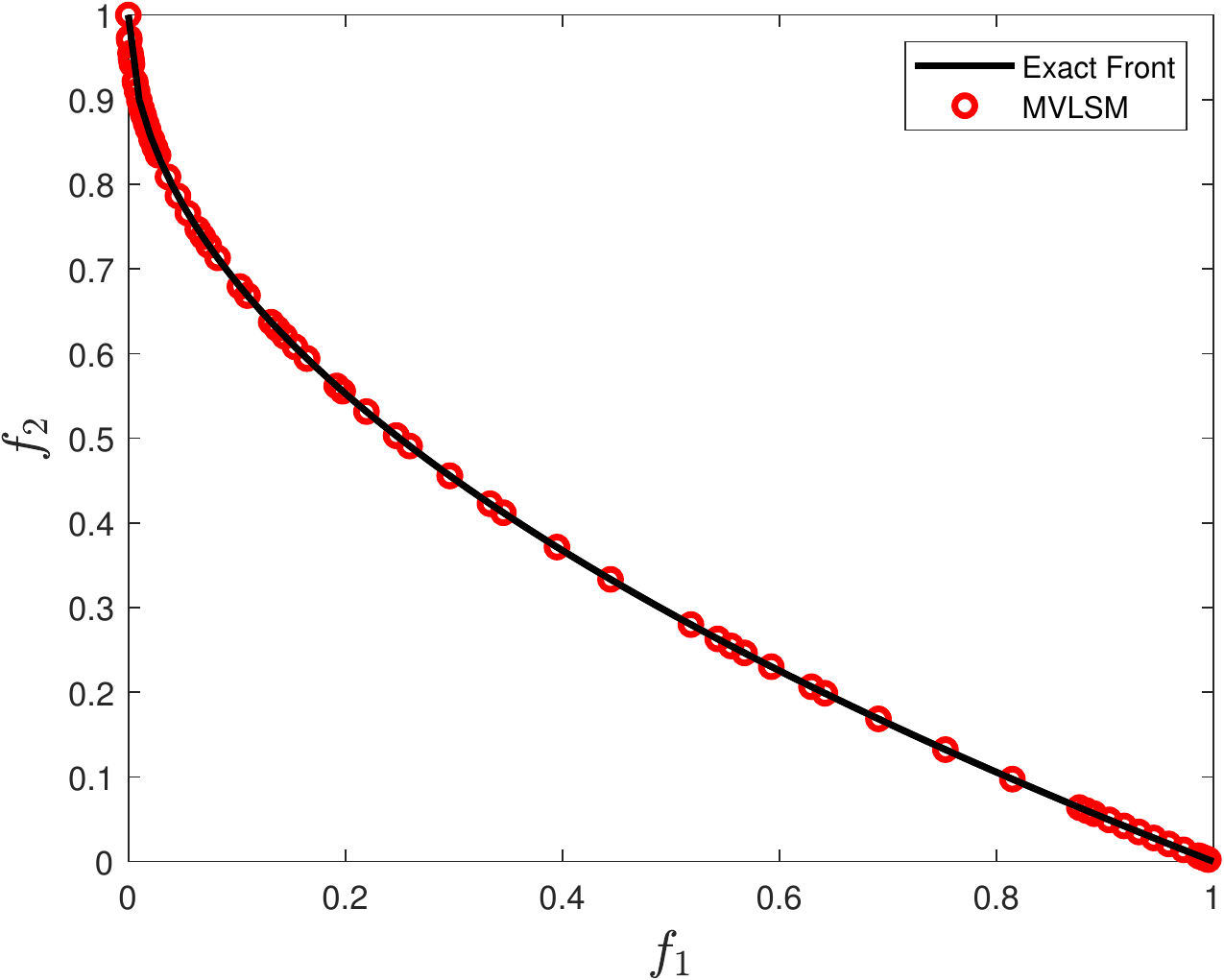}}\\
\cite[ZDT 1]{deb2001} &\\
& \\
&\\
$n=4$ &  \\
$r=2$ &\\
&\\
&\\
$T=0.0116$ sec & \\
$\overline{k}=3.5$ & \\
 &\\
 &\\
\hline
&\multirow{2}{*}{\includegraphics[scale=0.47]{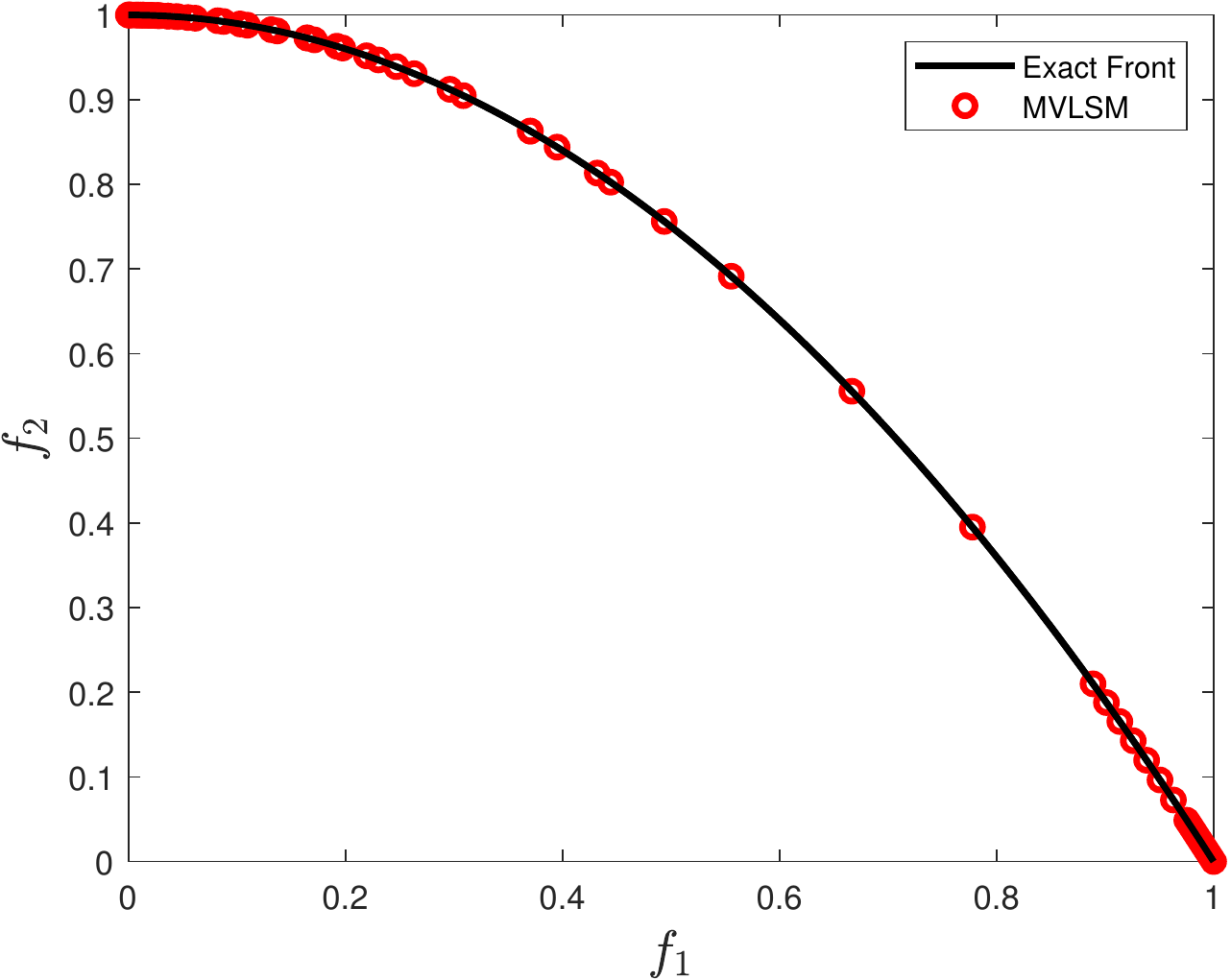}}\\
\cite[ZDT 2]{deb2001}&\\
&\\
&\\
$n=4$ &\\
$r=2$ &\\
&\\
&\\
$T=0.0110$ sec &\\
$\overline{k}=3.3$ & \\
&\\
&\\
\hline
\end{tabular}
\caption{Results for problems with dimension $n\geq 3$ or $r=3$.}
\label{table6}
\end{table}

\begin{table}[h!]
\centering
\begin{tabular}{|l|c|}\hline
 Problem & Pareto front\\
\hline
 &\multirow{2}{*}{\includegraphics[scale=0.47]{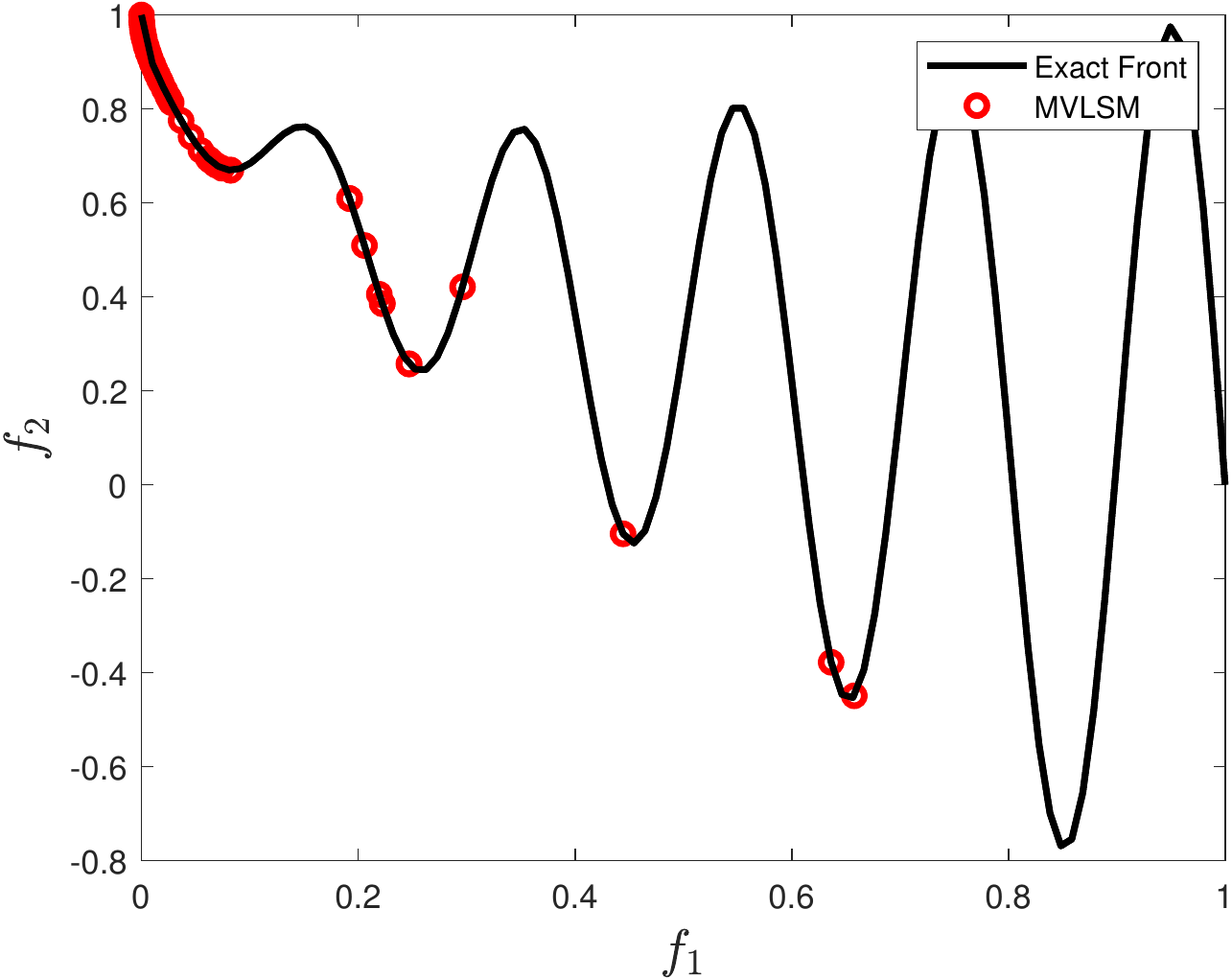}}\\
\cite[ZDT 3]{deb2001} &\\
&\\
$n=4$ & \\
$r=2$ &\\
&\\
&\\
$T=0.0123$ sec &\\
$\overline{k}=3.7$ &\\
&\\
\hline
 &\multirow{2}{*}{\includegraphics[scale=0.45]{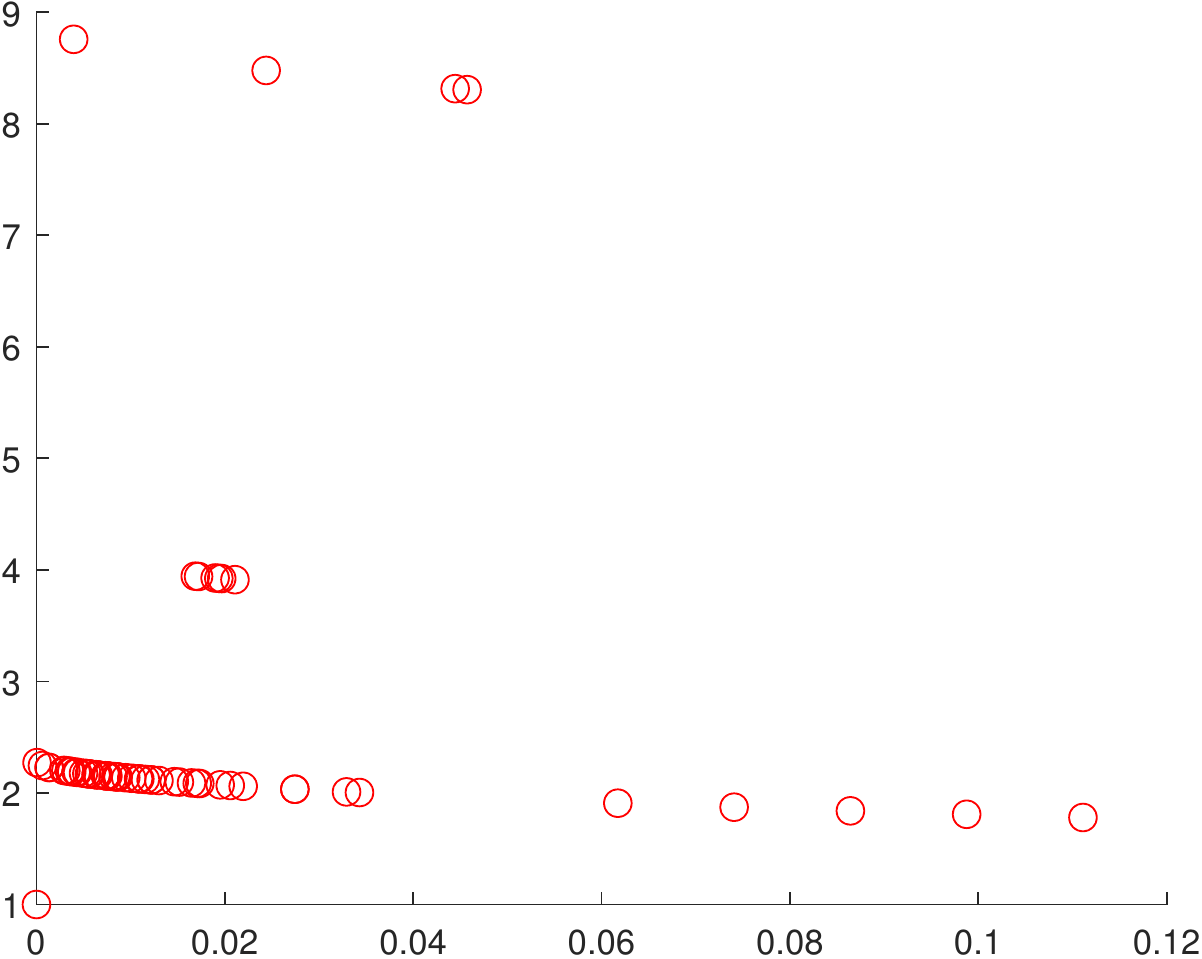}}\\
\cite[ZDT 4]{deb2001} &\\
& \\
&\\
$n=4$ &  \\
$r=2$ &\\
&\\
 &\\
$T=0.0392$ sec & \\
$\overline{k}=7.9$ & \\
&\\
\hline
\end{tabular}
\caption{Results for problems with dimension $n=4$.}
\label{table7}
\end{table}
\section{Conclusion}
\label{sec:conclusion}
In this paper, integral global optimality conditions are extended to multiobjective optimization problems from single objective case by using weighted scalarization techniques.  These conditions of optimality via integration can be a powerful tool to deal with several optimization problems of practical nature that appear in diverse areas of knowledge. Based on the theoretical results using Chebyshev scalarization, we proposed an algorithm to build an approximation of the  weak Pareto front.  The algorithm proposed was implemented in {\sc Matlab} and its good performance was illustrated  by solving a set of  unconstrained and box constrained problems with continuous variables and dimension at most $4$. 

The integral optimality conditions are interesting, among other reasons, because they can be applied even in the non-smooth case since no kind of derivative (or sub-derivative) is used. On the other hand, these conditions are stated in terms of multiple integrals, which may narrow applying such a theory to problems with many variables. Future research topics include implementing efficient methods to compute integrals with many variables and smarter choices of the weights to get points well spread in the (weak) Pareto front approximation. Also, we intend to study integral optimality conditions using other scalarization techniques.

\subsubsection*{Acknowledgments.}
{\small
The authors are thankful to Fernanda Maria Pereira, Valeriano Antunes de Oliveira  and to the anonymous referees whose suggestions led to improvements in the paper. The first author was partially supported by  CAPES - Brazil and Funda\c c\~ao para a Ci\^encia e a Tecnologia (FCT) through the projects PTDC/MAT-APL/28400/2017, UIDB/00297/2020, UI/BD/151246/2021, and UIDP/00297/2020 (CMA), Portugal}. The third author was partially supported by the European Regional Development Fund (ERDF) and by the Ministry of Economy, Knowledge, Business and University, of the Junta de Andaluc\'ia - Spain, within the framework of the FEDER Andalucía 2014-2020 operational program (UPO-1381297).

\bibliographystyle{abbrv}
\bibliography{refs}

\end{document}